\newtheorem{lem}{Lemma}[section]
\newtheorem{theo}[lem]{Theorem}
\newtheorem{propo}[lem]{Proposition}
\newtheorem{defi}[lem]{Definition}
\newtheorem{coro}[lem]{Corollary}
\newtheorem{rema}[lem]{Remark}
\numberwithin{equation}{section}
\def\dfrac{\displaystyle\frac}
\def\R{\mathbb{R}}
\def\epsilon{\varepsilon}
\def\RR{\mathbb{R}^2_+}
\def\FR{\partial\mathbb{R}^2_+}
\def\CIN{\int_{\RR}e^{ay}|\nabla w|^2dxdy}
\def\BA{\textit{B}_a}
\def\INF{\inf_{w \in \BA}E_a(w)}
\def\RN{\mathbb{R}^n_+}
\def\Ha{H^1_a (\RR)}
\newcommand{\norm}[1]{\lVert#1\rVert}
\begin{document}

\title[Traveling wave solutions in a half-space for boundary reactions]
{Traveling wave solutions in a half-space\\
for boundary reactions}
\author[Xavier Cabr\'e, Neus C\'onsul, Jos\'e V. Mand\'e]
{Xavier Cabr\'e, Neus C\'onsul, Jos\'e V. Mand\'e\\}
\address
{X.C. Icrea and Universitat Polit\`ecnica de Catalunya\\
Departament de Matem\`atica Aplicada I\\
Diagonal 647, 08028 Barcelona, Spain} \email
{xavier.cabre@upc.edu}
\address
{N.C. Universitat Polit\`ecnica de Catalunya\\
Departament de Matem\`atica Aplicada I\\
Diagonal 647, 08028 Barcelona, Spain} \email
{neus.consul@upc.edu}
\address
{J.V.M. Universitat Polit\`ecnica de Catalunya\\
Departament de Matem\`atica Aplicada~I\\
Diagonal 647, 08028 Barcelona, Spain} \email
{josemande@gmail.com}
\thanks{The authors were supported by grants MINECO MTM2011-27739-C04-01 and GENCAT
2009SGR-345.}

\begin{abstract}
We prove the existence and uniqueness of a traveling front and of its speed 
for the homogeneous heat equation in the half-plane with a Neumann boundary reaction term of 
non-balanced bistable type or of combustion type. 
We also establish the monotonicity of the front and, in the
bistable case, its behavior at infinity. In contrast with the classical bistable interior 
reaction model, its behavior at the side of the invading state is of power type,
while at the side of the invaded state its decay is exponential. 
These decay results rely on the construction of a family of explicit bistable traveling fronts.
Our existence results are obtained via a variational method, while the uniqueness of  
the speed and of the front rely on a comparison principle and the sliding method.
\end{abstract}

\maketitle

\section{Introduction}

This paper concerns the problem
\begin{equation}
\label{parabolic}
\begin{cases}
v_t -\Delta v =0&\text{ in } \R^2_+ \times (0,\infty)\\
\displaystyle{\frac{\partial v}{\partial\nu}} =f(v) &\text{ on }
\partial\R^2_+ \times (0,\infty)
\end{cases}
\end{equation} 
for the homogeneous heat equation in a half-plane with a nonlinear Neumann
boundary condition.  To study the propagation of fronts given an initial condition, it is
important to understand first the existence and properties of traveling fronts  ---or traveling waves--- for \eqref{parabolic}. Taking $ \R^2_+=\{(x,y)\in \R^2 \, :\, x>0\}$,
these are solutions of the form $v(x,y,t):=u(x,y-ct)$ for some speed $c\in\R$. 
Thus, the pair $(c,u)$ must solve the elliptic problem 
\begin{equation}
\label{problem}
\begin{cases}
\Delta u + cu_y=0&\text{ in } \R^2_+=\{(x,y)\in \R^2 \, :\, x>0\}\\
\displaystyle{\frac{\partial u}{\partial\nu}} =f(u) &\text{ on }
\partial\R^2_+,
\end{cases}
\end{equation}
where $\partial u/\partial\nu=-u_x$ is the exterior normal derivative of~$u$ on 
$\partial\R^2_+=\{x=0\}$ , $u$ is real valued,  and $c \in \R$. 
 
We look for solutions $u$ with $0<u<1$ and having the limits 
\begin{equation}\label{limits0}
\lim_{y\to -\infty} u(0,y)=1 \quad \textrm{ and } \quad \lim_{y\to +\infty} u(0,y)=0.
\end{equation}

Our results apply to nonlinearities $f$ of non-balanced bistable type or of combustion type, 
as defined next.

\begin{defi}
\label{nonlinearterm}
Let $f$ be $C^{1,\gamma}([0,1])$, for some $\gamma\in (0,1)$, satisfy
\begin{equation}
\label{zeroes}
f(0)=f(1)=0
\end{equation}
and, for some $\delta\in (0,1/2)$,
\begin{equation}
\label{noninc}
f'\leq 0\; \mbox{ in } (0,\delta)\cup (1-\delta,1).
\end{equation}

{\rm (a)} We say that $f$ is of positively-balanced bistable type if it
satisfies \eqref{zeroes}, \eqref{noninc}, that  
 $f$ has a unique zero ---named $\alpha$--- in $(0,1)$, and that
 it is ``positively-balanced'' in the sense that
 \begin{equation}\label{posint}
 \int_0^1f(s)ds > 0.
 \end{equation}

{\rm (b)} We say that $f$ is of combustion type if it
satisfies \eqref{zeroes}, \eqref{noninc}, and that
there exists $0<\beta<1$ (called the ignition temperature)
such that $f\equiv 0$ in $(0,\beta)$ and 
\begin{equation}\label{fposbeta}
f>0 \quad\text{ in } (\beta,1). 
\end{equation}
\end{defi}

In problem \eqref{problem} one must find not only the solution $u$ but also
the speed $c$, which is apriori unknown. We will establish that there is a unique
speed $c\in\R$ for which  \eqref{problem} admits a solution $u$ satisfying the
limits \eqref{limits0}. For such speed $c$,
using variational techniques we show the existence of a
solution $u$ which is decreasing in $y$, with limits $1$ and $0$ at
infinity on every vertical line. Moreover, we prove the 
uniqueness (up to translations in the $y$ variable) of a solution
$u$ with limits 1 and 0 as $y\to\mp\infty$.

The speed $c$ of the front will be shown to
be positive. Hence, since $c>0$, we have that 
$v(x,y,t)=u(x,y-ct) \rightarrow 1$ as 
$t \rightarrow +\infty$. That is, the state $u\equiv 1$ invades the state $u\equiv0$.

For non-balanced 
bistable nonlinearities which satisfy in addition $f'(0)<0$ and $f'(1)<0$,
we find the behaviors of the front at $y=\pm \infty$.
In contrast with the classical bistable interior 
reaction model, its behavior at the side of the invading state $u=1$, i.e. as $y\to -\infty$,
is of power type, while its decay is exponential as $y\to +\infty$.

Our results are collected in the following result. We point out that,
since $f\in C^{1,\gamma}$, weak solutions to problem \eqref{problem} 
can be shown to be classical, indeed $C^{2,\gamma}$ up to $\partial\RR$.
This is explained in the beginning of section~\ref{Limites}.

\begin{theo}
\label{Main} Let $f$ be of positively-balanced bistable type or of combustion type 
as in Definition {\rm \ref{nonlinearterm}}.
We have:

{\rm{(i)}}
There exists a solution pair $(c,u)$ to problem \eqref{problem}, where $c>0$, $0<u<1$,
and $u$ has the limits \eqref{limits0}. The solution $u$ lies in the weighted Sobolev space 
$$
H^1_c (\RR):= \{w \in H_{\rm loc}^{1}(\RR) \, : \, ||w||_{c}:=
 \int_{\RR} e^{cy}\{w^2+|\nabla w |^2\} dxdy <\infty \}.
$$

{\rm{(ii)}} Up to translations in the $y$ variable, 
$(c,u)$ is the unique solution pair to problem \eqref{problem}
among all constants $c\in\R$ and solutions $u$ satisfying 
$0\leq u\leq 1$ and the limits \eqref{limits0}.

{\rm{(iii)}} For all $x\geq 0$, $u$  is decreasing in the $y$ variable, 
and has limits $ u(x,-\infty)= 1$ and $u(x, + \infty) = 0$.
Besides, $\lim_{x\to +\infty} u(x, y)=0$ for all $y\in \R$.
If $f$ is of combustion type then we have, in addition, $u_x\leq 0$ in $\RR$.

{\rm{(iv)}} If $f_1$ is of positively-balanced bistable type or of combustion type, if the same holds
for another nonlinearity $f_2$, and if we have that $f_1\geq f_2$ and $f_1\not
\equiv f_2$, then their corresponding speeds satisfy $c_1>c_2$.

{\rm{(v)}} Assume that $f$ is of positively-balanced bistable type and that
\begin{equation}\label{negder}
f'(0)<0 \quad	\text{and}\quad f'(1)<0.
\end{equation}
Then, there exists a constant $b>1$ such that:
\begin{align}
\label{asyderpos}
\frac{1}{b}\,\,    \frac{e^{-cy}}{y^{3/2}}  \leq -u_y(0,y)  \leq b\,\,  \frac{e^{-cy}}{y^{3/2}} 
& \qquad\text{ for } y>1,  \\
\label{asyderneg}
\frac{1}{b}\, \,   \frac{1}{(-y)^{3/2}}  \leq -u_y(0,y)  \leq b\,\,    \frac{1}{(-y)^{3/2}} 
& \qquad\text{ for }y<-1, \\
\label{asypos}
\frac{1}{b}\,\,   \frac{e^{-cy}}{y^{3/2}}  \leq u(0,y)  \leq b\,\,    \frac{e^{-cy}}{y^{3/2}} 
& \qquad\text{ for } y>1, \quad\text{and}  \\
\label{asyneg}
\frac{1}{b}\, \,   \frac{1}{(-y)^{1/2}}  \leq 1-u(0,y)  \leq b\, \,   \frac{1}{(-y)^{1/2}} 
& \qquad\text{ for }y<-1.
\end{align}

The lower bounds for $-u_y$, $u$, and $1-u$
in \eqref{asyderpos}-\eqref{asyneg} hold
for any $f$ of positively-balanced bistable type or of combustion type as in Definition {\rm \ref{nonlinearterm}}.
\end{theo}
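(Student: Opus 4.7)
The proof breaks into the five parts (i)--(v), which I would tackle in order since later parts build on earlier ones.

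For existence I would work in the weighted space $H^1_c(\RR)$ with weight $e^{cy}$, under which $\Delta u + cu_y=0$ takes divergence form $\mathrm{div}(e^{cy}\nabla u)=0$ with associated energy
\[
E_c(w) = \int_{\RR} \tfrac12 e^{cy}|\nabla w|^2\,dxdy - \int_{\FR} e^{cy} F(w)\,dy,
\]
where $F'=f$. For fixed $c$, $E_c$ is not bounded below on the full $y$-translation orbit, so I would minimize over a constrained class $B_c$ (as the preamble macros suggest), for example by fixing the value of $w$ at a prescribed boundary point or by normalizing the trace to cross a given level. The speed $c$ is then selected as the unique value for which the constrained infimum is attained in a non-degenerate way: concentration-compactness along $y$-translations together with the coercivity provided by the weight produces a minimizer, which solves \eqref{problem} by Euler--Lagrange, and the limits \eqref{limits0} follow from the weighted integrability together with boundary regularity.

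For uniqueness (ii) I would first establish monotonicity of any admissible solution by the sliding method: compare $u(x,y)$ with translates $u(x,y+t)$ for $t$ large positive (where the limits force ordering) and slide $t$ down; at the first touching $t_0$ the strong maximum principle and the Hopf lemma, both applicable to the oblique boundary reaction since $f\in C^{1,\gamma}$, rule out interior contact and force $t_0=0$, hence strict monotonicity in $y$. Given two solution pairs $(c_1,u_1)$, $(c_2,u_2)$ with $c_1<c_2$, sliding $u_2$ against $u_1$ and using $\Delta(u_1-u_2)+c_1(u_1-u_2)_y=(c_2-c_1)(u_2)_y<0$ at a touching point contradicts the sign, so $c$ is unique; a further sliding step yields uniqueness of $u$ up to translation. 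The $y$-limits on each vertical line follow because $u(x,\pm\infty)$ solves the 1D problem $w''=0$, $-w'(0)=f(w(0))$ on $[0,\infty)$, whose only bounded solutions matching \eqref{limits0} are $1$ and $0$; the decay $u(x,y)\to 0$ as $x\to\infty$ is obtained from weighted integrability and gradient estimates. In the combustion case $u_x\leq 0$ follows from a moving-plane argument in $x$ combined with $f\equiv 0$ below ignition. Part (iv) uses exactly the same comparison machinery: a traveling wave for $f_1\geq f_2$ is a strict supersolution of the $f_2$-equation, so sliding forces $c_1>c_2$.

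For (v) I would linearize on each side and exhibit explicit barriers. Near $u=0$, setting $\phi=u$ and studying $\Delta\phi+c\phi_y=0$ in $\RR$ with $-\phi_x=f'(0)\phi$ on $\FR$, the condition $f'(0)<0$ admits separated modes $e^{-cy/2}\psi(x)$ that account for the exponential factor $e^{-cy}$ in \eqref{asyderpos},\eqref{asypos}; the additional $y^{-3/2}$ correction arises from the half-plane Poisson convolution against the source localized at finite $y$. Near $u=1$, writing $\phi=1-u$ and linearizing at $1$, the combination $c>0$ with $f'(1)<0$ pushes the would-be exponential rates into the complex regime, ruling out exponential decay; the slowest admissible algebraic rate is then the $(-y)^{-1/2}$ profile of the half-plane Poisson kernel. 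To upgrade these heuristics to sharp matching bounds I would use the explicit family of bistable traveling fronts advertised in the abstract, inserting them as super- and sub-solutions via the comparison principle. The main obstacle, as I see it, is exactly this part (v), and specifically the matching bounds on the invading side where the decay is only polynomial: establishing this slow rate rigorously requires both a spectral argument ruling out any exponential mode at $y=-\infty$ and an explicit family of bistable fronts sharp enough to serve as two-sided barriers, which cannot be replaced by soft linear theory. By contrast, (i) is fairly routine weighted variational calculus once the right constraint selecting $c$ is identified, and the sliding arguments behind (ii)--(iv) are standard modulo the verification that Hopf applies to the oblique Neumann-type boundary condition.
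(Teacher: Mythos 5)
Your outline for parts (ii), (iv) and (v) is essentially the paper's route: uniqueness and speed comparison come from a sliding argument combined with a maximum principle adapted to the unbounded half-plane, and the key observation that a decreasing front for $c_1$ is a supersolution of the problem with any $c_2\geq c_1$ (and likewise with $f_2\leq f_1$) is exactly the mechanism used. The sharp decay in (v) is indeed obtained by trapping $-u_y$ between multiples of an explicit Poisson-type kernel built from the Bessel function $K_1$, inserted through the comparison principle, so your plan there is sound in spirit.

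Part (i), however, has genuine gaps. First, the constraint. In the paper the weight exponent is an auxiliary small parameter $a$, \emph{not} the speed: one minimizes $E_a(w)=\frac12\Gamma_a(w)+\int_{\FR}e^{ay}G(w)\,dy$ over $B_a=\{\Gamma_a(w)=1\}$, where $\Gamma_a$ is the weighted Dirichlet energy; the scaling identity $E_a(w(\cdot,\cdot+t))=e^{-at}E_a(w)$ is what makes this normalization compatible with translations. The Lagrange multiplier $\lambda_a$ enters the Neumann condition as a factor $1/(1-2\lambda_a)$, which is removed by the dilation $u(x,y)=\underline u(\mu x,\mu y)$ with $\mu=1-2\lambda_a$, and the speed comes out as $c=a(1-2\lambda_a)$. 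Your proposal to fix the value of $w$ at a boundary point, or the level-crossing of the trace, does not produce this structure (a pointwise constraint contributes a Dirac mass to the Euler--Lagrange equation), and ``selecting $c$ as the unique value for which the infimum is attained'' inverts the logic: any small $a$ works, and uniqueness of the speed is a separate theorem proved afterwards by sliding. Second, you do not address the main analytic difficulty: weak lower semicontinuity of the negative part of the boundary potential on the unbounded half-plane. The paper controls $\int_R^{\infty}e^{ay}G^-(u_k(0,y))\,dy$ uniformly along the minimizing sequence by estimating the measure of the superlevel sets $\{u_k>\beta,\;y>R\}$ (which shrink like $e^{-aR}$ because $\Gamma_a(u_k)=1$) and then invoking a Riesz-potential inequality on these small sets; concentration--compactness along $y$-translations would not by itself handle the loss of compactness in the $x$ direction. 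Third, the limits: weighted integrability gives $u(0,y)\to0$ as $y\to+\infty$, but says nothing as $y\to-\infty$, where the weight degenerates. The paper first makes the minimizer monotone in $y$ by a decreasing rearrangement in the variable $z=e^{ay}/a$ (using weighted rearrangement inequalities of Landes and Brock), and only then identifies the limit at $-\infty$ as a zero of $f$ in $(\beta,1]$, hence $1$; without monotonicity your argument for $\lim_{y\to-\infty}u=1$ does not close. A minor point: for $u_x\leq0$ in the combustion case, moving planes in $x$ is not available in a half-plane; the paper simply applies the maximum principle to $u_x$, which solves the linear equation and satisfies $u_x=-f(u)\leq0$ on $\{x=0\}$.
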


Our result on the existence of the traveling front will be proved using a variational method
introduced by Steffen Heinze~\cite{H}. It is explained later in this section.
Heinze studied problem \eqref{problem} in infinite cylinders of $\R^n$ instead of
half-spaces.  For these domains and for both bistable and combustion nonlinearities, 
he showed the existence of a 
traveling front. Using a rearrangement technique after making the change
of variables $z=e^{ay}/a$, he also proved the monotonicity of the front. 
In addition, \cite{H} found an interesting formula,  \eqref{formspeedintro},
for the front speed in terms of the minimum value of the variational problem.
The formula has interesting consequences, such as part (iv) of
our theorem: the relation between the speeds for two comparable nonlinearities.

For our existence result we will proceed as in \cite{H}. The weakly lower semicontinuity
of the problem will be more delicate in our case due to the unbounded character
of the problem in the $x$ variable ---a feature not present in cylinders.
The rearrangement technique will produce a monotone front. Its monotonicity will
be crucial in order to establish that it has limits 1 and 0 as $y\to\mp\infty$.
On the other hand, the front being in the weighted Sobolev space $H^1_c(\RR)$
will lead easily to the fact that $u\to 0$ as $x\to +\infty$.
While $u_x\leq 0$ in case of combustion nonlinearities ---as stated
in part (iii) of the theorem---, this property
is not true for bistable nonlinearities since the normal derivative $-u_x=f(u)$ changes
sign on $\{x=0\}$.

The variational approach has another interesting feature. Obviously, 
the solutions that we produce
in the half-plane are also traveling fronts for the same problem in a half-space $\R^n_+$, with
$n\geq 3$. They only depend on two Euclidean variables. However, if the minimization problem
is carried out directly in  $\R^n_+$ for $n\geq 3$, 
then it produces a different type of solutions that decay to 0
in all variables but one; see Remark~\ref{highd} for more details. 

In the case of combustion nonlinearities, problem \eqref{problem} in a half-plane 
has been studied by Caffarelli, Mellet, and Sire~\cite{CMS}, 
a paper performed at the same time as most of our work.  
They establish the existence of a speed admitting a monotone front.  As mentioned in 
\cite{CMS}, our approaches towards the existence result are different. 
Their work does not use 
minimization methods, but instead approximation by truncated
problems in bounded domains  ---as in \cite{BN2}. They also rely in an interesting 
explicit formula for traveling fronts of a free boundary problem obtained as a
singular limit of \eqref{problem}. In addition, \cite{CMS}
establishes the following precise behavior of the
combustion front at the side of the invaded state $u=0$.
For some constant $\mu_0>0$,
\begin{equation}\label{theirs}
u(0,y) = \mu_0\, \frac{e^{-cy}  }{y^{1/2}}  +{\rm O}
\left(\frac{e^{-cy}  }{y^{3/2}} \right) \quad \mbox{ as $y\rightarrow +\infty$}
\end{equation}
(here we follow our notation; \cite{CMS} reverses the states
$u=0$ and $u=1$). Note that the decay for combustion
fronts is different than ours: $y^{1/2}$ in \eqref{theirs} is replaced
by $y^{3/2}$ in the bistable case. Note however that the main order in the decays
is $e^{-cy}$ and 
that the exponent $c$ depends in a highly nontrivial way on each nonlinearity~$f$.

Uniqueness issues for the speed or for the front in problem \eqref{problem} 
are treated for first time in the present paper. 
Our result on uniqueness of the speed and of the front 
relies heavily on the powerful sliding method of Berestycki and Nirenberg \cite{BN1}. 
We also use a comparison principle analogue to one in the paper by 
the first author and Sol\`a-Morales \cite{CSo}, 
which studied problem \eqref{problem} with $c=0$.
Among other things, \cite{CSo} established the existence, uniqueness,
and monotonicity of a front for \eqref{problem} when $c=0$
and $f$ is a balanced bistable nonlinearity. It was shown also there that in the
balanced bistable case, the front reaches its limits 1 and 0 at the 
power rate $1/|y|$. We point out that the variational method in the present 
paper requires $f$ to be non-balanced. It cannot be carried out in the balanced case.

Suppose now that $f$ satisfies the assumptions made above 
for bistable nonlinearities except for condition \eqref{posint}, and assume instead that 
$$
\int_0^1f(s)ds \leq 0. 
$$
First, if the above integral is zero (i.e., $f$ is balanced), \cite{CSo} established the existence
of a monotone front for $f$ with speed $c=0$.
Suppose now that the above integral is negative. Then, the
nonlinearity  $\tilde f(s) := -f(1-s)$ has now positive integral and is of bistable type. Thus, it  
produces a solution pair $(\tilde c,\tilde u)$ for problem \eqref{problem}
with positive speed $\tilde c$. Then, if $u(x,y):= 1-\tilde u(x,-y)$,  $(-\tilde c,u)$ 
is a solution pair to \eqref{problem} for  the original $f$. 
The traveling speed $-\tilde c$ is now negative.

To prove our decay estimates as $y\to\pm\infty$, we use ideas from \cite{CMS} and \cite{CSi}.
The estimates rely on the construction of a family
of explicit fronts for some bistable nonlinearities. Their formula and properties are stated in 
Theorem~\ref{theoexpl} below. To explain how we construct these fronts, note
that $u$ is a solution of \eqref{problem} if and only if its trace $v(y):=u(0,y)$ solves
the fractional diffusion equation
\begin{equation}\label{fracpb}
(-\partial_{yy} -c\partial_y)^{1/2} v = f(v)\,\, \text{ in }\R, \quad\text{ for } v(y):=u(0,y).
\end{equation}
This follows from two facts. First, if $u$ solves the first equation in \eqref{problem},
then so does $-u_x$. Second, we have
$(-\partial_x)^2=\partial_{xx}=-\partial_{yy} -c\partial_y$. Note that our main result
states that there is a unique $c\in\R$ for which the fractional equation
\eqref{fracpb} admits a solution connecting 1 and 0.

As in the paper by the first author and Sire \cite{CSi}, which studied problem
$(-\partial_{yy})^{s} v = f(v)$ in $\R$ for balanced bistable nonlinearities,
the construction of explicit fronts will be based on the fundamental solution 
for the homogeneous heat equation associated to the fractional operator 
in \eqref{fracpb}, that is, equation 
$$
\partial_t v + (-\partial_{yy} -c\partial_y)^{1/2} v=0.
$$
The process to find such heat kernel uses an idea from the paper
\cite{CMS} by Caffarelli, Mellet, and Sire, and it  is explained in section \ref{explicit} below.

Regarding the resulting decays \eqref{asypos} and \eqref{asyneg}
for bistable fronts, note that the exponential decay 
at the side of the invaded state $u=0$  is much faster than the power decay \eqref{asyneg}
at the side of the invading state $u=1$. 
This large difference of rates is clearly seen in the explicit fronts that we built.
See Figure~\ref{dibujoexpl} for the plots of one of such fronts,
where the much steeper decay on the right is clearly appreciated.

These decays are also in contrast with the classical ones for the bistable equation
$u_{yy}+cu_y+f(u)=0$ in $\R$, which are both pure exponentials ---with exponents that may
be different at $+\infty$ and $-\infty$. Note however that the exponent $c$ 
in the exponential  term at $+\infty$ for our problem will be different, in general, 
than the corresponding exponent in the classical case ---taking here
the same nonlinearity~$f$ for both problems.

The next theorem concerns the explicit bistable fronts that we construct.
They will lead to the decay bounds of Theorem~\ref{Main} for general bistable fronts. 
They involve the modified Bessel function of the second kind $K_1$ with index $\nu=1$.
We recall that $K_1(s)$ is a positive and decreasing function of $s>0$
(see \cite{AS}).

\begin{theo}\label{theoexpl}
For every $c>0$ and $t>0$, let
$$
u^{t,c}(x,y):=u^t\left( \frac{c}{2}x, \frac{c}{2}y \right),
$$
where
\begin{equation*}
u^t(x,y) := \int_y^{+\infty} e^{-z}\dfrac{x+t}{\pi\sqrt{(x+t)^2+z^2}} 
\,\, K_1( \sqrt{(x+t)^2+z^2} ) dz
\end{equation*}
and $K_1$ is the modified Bessel function of the second kind with index $\nu=1$.

Then, there exists a nonlinearity $f^{t,c}$ of positively-balanced bistable type for which
$(c,u^{t,c})$ is the unique solution pair to problem \eqref{problem}  
with $0\leq u^{t,c}\leq 1$ satisfying the limits \eqref{limits0}. In addition, we have
$$
(f^{t,c})'(0)= (f^{t,c})'(1)= -\frac{c}{2t}. 
$$
and that $f^{t,c}=(c/2)f^t$ for a nonlinearity $f^t$ independent of $c$.

Furthermore, on $\partial\RR$ the derivative of $u^{t,c}$ satisfies
\begin{align*}
& -u^{t,c}_y(0,y) =\frac{t}{(\pi c)^{1/2}}\,\, \frac{e^{-cy}}{y^{3/2}}+ {\rm o}
\left( \frac{e^{-cy}}{y^{3/2}} \right) 
\quad \mbox{ as $y\to +\infty$}, \quad\text{and}\\
& -u^{t,c}_y(0,y) =\frac{t}{(\pi c)^{1/2}}\,\, \frac{1}{(-y)^{3/2}}+ {\rm o}
\left( \frac{1}{(-y)^{3/2}} \right) \quad \mbox{ as $y\to -\infty$.}
\end{align*}
\end{theo}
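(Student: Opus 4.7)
The plan is to verify the explicit construction in four steps. First, I reduce to $c=2$ via the scaling $(x,y)\mapsto(cx/2,cy/2)$, which preserves $\Delta u+cu_y=0$ and transforms the Neumann datum so that $f^{t,c}(s)=(c/2)f^t(s)$; this immediately gives the claimed factorization $f^{t,c}=(c/2)f^t$ and converts $(f^t)'(0)=(f^t)'(1)=-1/t$ into $(f^{t,c})'(0)=(f^{t,c})'(1)=-c/(2t)$. Next, I identify the integrand
$$G(x,y):=e^{-y}\,\frac{x}{\pi\sqrt{x^2+y^2}}\,K_1\bigl(\sqrt{x^2+y^2}\bigr)$$
as the Poisson kernel of $L^{1/2}$ for $L:=-\partial_{yy}-2\partial_y$. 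The conjugation $L=e^{-y}(-\partial_{yy}+1)e^{y}$ reduces $L^{1/2}$ to the Klein--Gordon operator $(-\partial_{yy}+1)^{1/2}$, whose Poisson kernel is the classical Bessel expression $\frac{x}{\pi\sqrt{x^2+y^2}}K_1(\sqrt{x^2+y^2})$; multiplying by $e^{-y}$ produces $G$. Two facts then follow automatically: $G$ satisfies $G_{xx}+G_{yy}+2G_y=0$ in $\{x>0\}$, and $\int_\R G(x,y)\,dy=1$ for every $x>0$ (mass conservation, since $L\cdot 1=0$ makes the $L^{1/2}$-semigroup preserve constants).

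Writing $u^t(x,y)=\int_y^{+\infty}G(x+t,z)\,dz$ and differentiating under the integral deliver the bulk PDE for $u^t$; positivity $K_1>0$ gives $u^t_y=-G(x+t,\cdot)<0$; mass conservation gives $u^t(\cdot,-\infty)=1$ and $u^t(\cdot,+\infty)=0$. Consequently $0<u^t<1$, the trace $u^t(0,\cdot):\R\to(0,1)$ is a strictly decreasing bijection, and I may legitimately define $f^t$ implicitly by $f^t(u^t(0,y)):=-u^t_x(0,y)$, with $f^t(0)=f^t(1)=0$ by continuous extension.

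All asymptotic statements reduce to the expansion $K_\nu(r)\sim\sqrt{\pi/(2r)}\,e^{-r}$ as $r\to\infty$. Noting $e^{-y}K_1(\sqrt{t^2+y^2})\sim\sqrt{\pi/(2y)}\,e^{-2y}$ for $y\to+\infty$ and $\sim\sqrt{\pi/(2|y|)}$ for $y\to-\infty$, and plugging into $-u^{t,c}_y(0,y)=(c/2)G(t,cy/2)$, yields the claimed first-order expansions of $-u^{t,c}_y(0,y)$. Integrating in $y$ gives $u^t(0,y)\sim\frac{t}{2\sqrt{2\pi}}e^{-2y}y^{-3/2}$ at $+\infty$ and $1-u^t(0,y)\sim t\sqrt{2/\pi}\,|y|^{-1/2}$ at $-\infty$. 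An analogous analysis of
$$G_x(t,y)=\frac{e^{-y}}{\pi r^3}\Bigl[(y^2-t^2)K_1(r)-t^2 rK_0(r)\Bigr],\qquad r=\sqrt{t^2+y^2},$$
obtained from the identity $K_1'(r)=-K_0(r)-K_1(r)/r$, followed by integration in $y$, gives the asymptotics of $-u^t_x(0,y)$. Comparing these with $u^t(0,y)$ and $1-u^t(0,y)$ yields $(f^t)'(0)=(f^t)'(1)=-1/t$. Smoothness of $K_0,K_1$ and non-vanishing of $u^t_y(0,\cdot)$ give the $C^{1,\gamma}$ regularity of $f^t$.

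Finally, I need $f^t$ to be of positively-balanced bistable type. Using $\int_\R G_x=0$, write $-u^t_x(0,y)=\int_{-\infty}^y G_x(t,z)\,dz$, so zeros of $f^t$ in $(0,1)$ correspond to zeros of this antiderivative in $y$. Rewriting the bracket above as $\tilde h(r):=(r^2-2t^2)K_1(r)-t^2 rK_0(r)$ on $r\geq t$, one checks $\tilde h(t)<0$, $\tilde h(r)\to 0^+$ as $r\to\infty$, and (via the $K_\nu$ recurrences) $\tilde h'>0$ near $r=t$ while $\tilde h'<0$ for large $r$. So $\tilde h$ has exactly one zero, $G_x(t,\cdot)$ has exactly two symmetric sign changes, and therefore $-u^t_x(0,\cdot)$ has exactly one zero in $y$. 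Combined with the asymptotic signs (giving $f^t<0$ near $0$ and $f^t>0$ near $1$), this yields bistable shape with a unique interior zero. The positive balance $\int_0^1 f^t>0$ follows from a Pohozaev-type identity obtained by multiplying $\Delta u+cu_y=0$ by $u_y$ and integrating over $\RR$: after boundary manipulations and use of the decays already established, one obtains $\int_0^1 f^t=c\int_\RR u_y^2>0$. Uniqueness of the solution pair is then immediate from Theorem~\ref{Main}(ii). The main obstacle is the sign-change count for $\tilde h$, which underpins the uniqueness of the interior zero of $f^t$; everything else is Bessel-function bookkeeping and standard integration by parts.
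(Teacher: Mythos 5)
Your construction follows the paper's proof essentially step for step: the conjugation $w=e^{-y}\phi$ reducing $L_2w=\Delta w+2w_y=0$ to the Helmholtz equation, the resulting Bessel Poisson kernel and its normalization $\int_\R P^t(x,z)\,dz=1$, the implicit definition $f^t(u^t(0,y)):=-u^t_x(0,y)$, the asymptotics from $K_\nu(r)\sim\sqrt{\pi/(2r)}\,e^{-r}$, and uniqueness via Theorem~\ref{Main}(ii) are all the paper's argument. Your ``Pohozaev identity'' $\int_0^1 f^t=c\int_{\RR}u_y^2$ is the paper's computation $\int_0^1 f^t=8\int_{\RR}(G^t_x)^2$ in disguise, since $u^t_y=2G^t_x$ and $c=2$. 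The one genuine divergence is the bistable shape: the paper shows $(f^t)'(u^t(0,y))=-u^t_{xy}/u^t_y(0,y)$ changes sign exactly twice using that $(K_0+K_2)/(2K_1)$ is decreasing, whereas you count sign changes of $G_x(t,\cdot)$ and of its antiderivative. Your route is legitimate and arguably more direct, but the pivotal step is incomplete as written: $\tilde h(t)<0$, $\tilde h\to 0^+$, $\tilde h'>0$ near $r=t$ and $\tilde h'<0$ for large $r$ only force an odd number of zeros of $\tilde h$, not exactly one. Closing it requires showing $\tilde h'$ vanishes exactly once, which (via the recurrences) reduces to comparing the increasing function $K_0/K_1$ against an explicitly decreasing rational expression --- a Bessel monotonicity input of the same depth as the one the paper invokes without proof. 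You correctly identify this as the main obstacle; also note that for $C^1$ regularity of $f^t$ up to $s=0,1$ you should verify that $(f^t)'(s)=-u^t_{xy}/u^t_y(0,y)$ itself converges to $-1/t$ as $|y|\to\infty$, not only the difference quotients.
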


Figure~\ref{dibujoexpl} shows plots of the explicit bistable front
$u^1=u^{1,2}$, a front with speed $c=2$. In both of them we have $-22\leq y\leq 2$.
The much faster decay for positive 
values of $y$ than for negative values is clearly appreciated.
In (B), the steepest profile corresponds to $x=0$ while the profile 
in the back of the picture is for $x=2$.

\begin{figure}
\centering 
        \begin{subfigure}[b]{0.38\textwidth}
                \includegraphics[width=\textwidth]{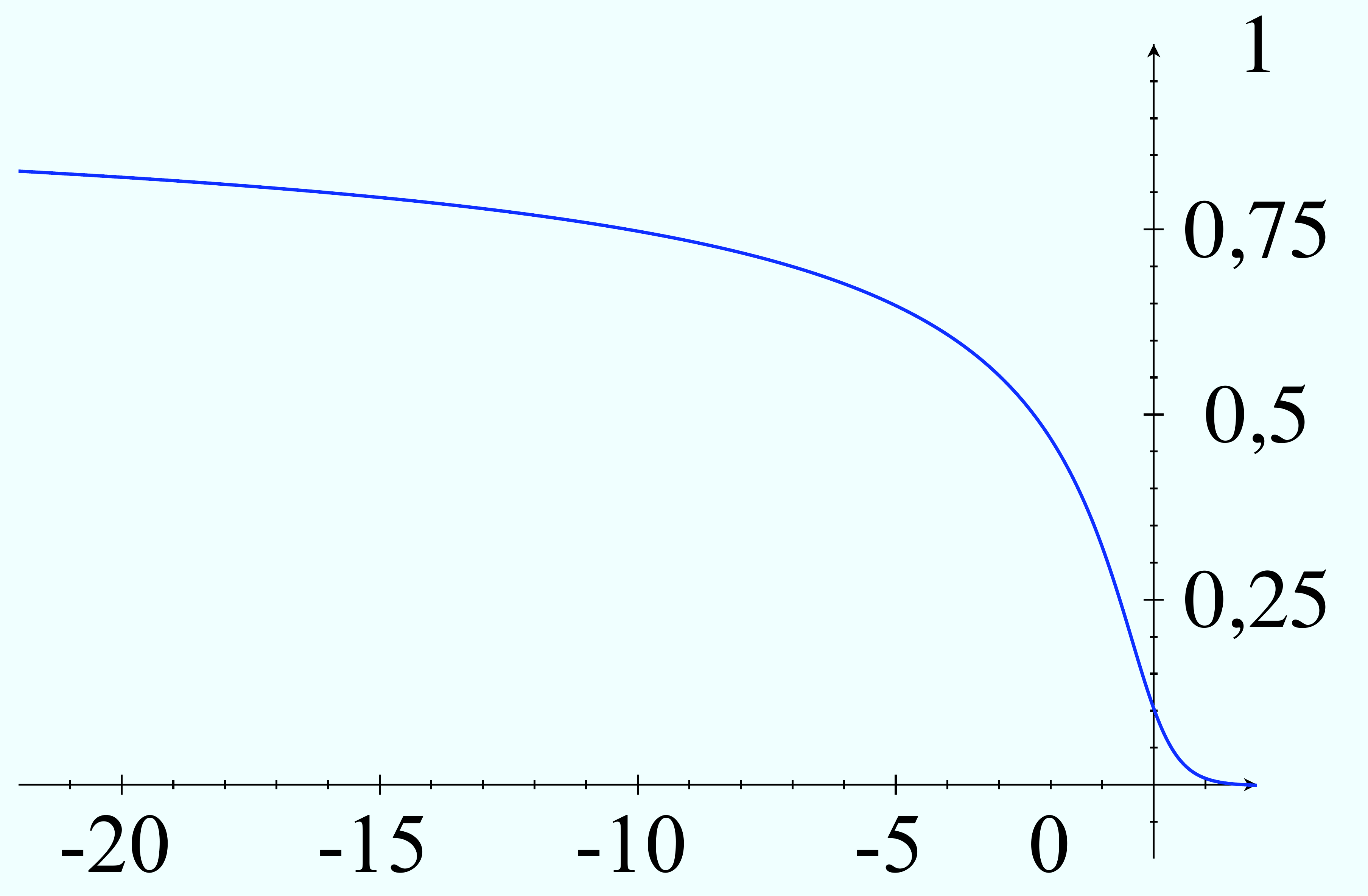}
                \caption{$-22\leq y\leq 2$ and $x=0$}
        \end{subfigure}
        \hspace{-3mm}
           \begin{subfigure}[b]{0.62\textwidth}
                \includegraphics[width=\textwidth]{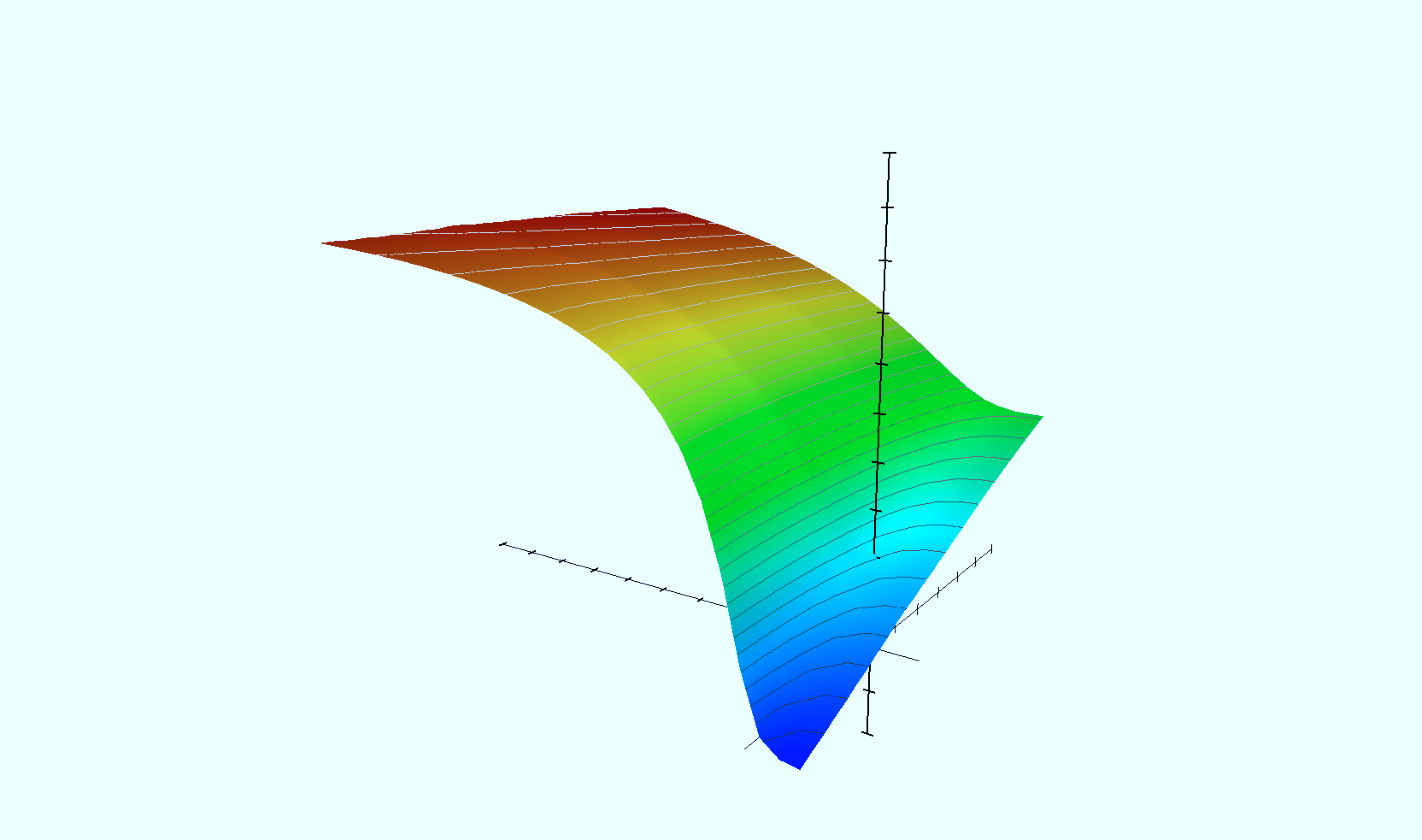}
                \caption{$-22\leq y\leq 2$ and $0\leq x\leq 2$}
        \end{subfigure}
\caption{The explicit bistable front $u^1$}
\label{dibujoexpl}
\end{figure}

The paper by Kyed~\cite{Ky} also studies problem~\eqref{problem} in infinity cylinders of $\R^3$. 
It deals with nonlinearities $f$ that vanish only at 0 and that appear in models of boiling processes.  \cite{Ky} also uses the variational principle of Heinze. In addition,
\cite{Ky} contains some exponential decay bounds.
The papers \cite{L2,L3} by Landes study problem~\eqref{parabolic} in finite cylinders,
with special interest on bistable nonlinearities. These articles establish  
the presence of wave-front type solutions for some initial conditions and  
give in addition bounds for their propagation speed. For this, appropriate sub and 
supersolutions are constructed.

The variational method in \cite{H} has also been used by 
Lucia, Muratov, and Novaga \cite{LMN} to study
the classical interior reaction equation $u_{yy}+cu_y+f(u)=0$ for monostable type
nonlinearities $f$. Their paper gives a very interesting characterization
of the phenomenon of linear versus nonlinear selection for the front speed.

In relation with fractional diffusions ---such as \eqref{fracpb}---,
the existence of traveling fronts for
\begin{equation}\label{fracMRS}
\partial_t v + (-\partial_{yy})^{s} v = f(v) \quad\text{ in }\R
\end{equation}
has been established  in \cite{MRS} when $s\in(1/2,1)$ and $f$ is a combustion nonlinearity. 
This article also shows that $v$ tends to 0 at $+\infty$ at the power rate $1/|y|^{2s-1}$.
Note that the equation for traveling fronts of \eqref{fracMRS} is
$$
\left\{ (-\partial_{yy})^{s} -c\partial_y\right\} v = f(v) \quad\text{ in }\R,
$$ 
which should be compared with \eqref{fracpb}. 
In the case of bistable nonlinearities, \cite{GZ} establishes that 
\eqref{fracMRS}  admits a unique traveling front and a unique
speed for any $s\in (0,1)$. In contrast with the decay in \cite{MRS}
for combustion nonlinearities, in the bistable case
\cite{GZ} shows that the front reaches its two limiting values at the rate
$1/|y|^{2s}$ ---as in \cite{CSo, CSi} for balanced bistable nonlinearities.

Next, let us describe the structure of the variational problem that will lead to our
existence result. First, we enumerate the five concrete properties of the 
nonlinearity needed for all the results of the paper to be true.

\begin{rema}\label{nonl}
{\rm
Even that we state our main result for bistable and combustion type nonlinearities (for the
clarity of the reading), all our proofs require only the following five conditions on~$f$:
\begin{equation}\label{fnegint}
\text{\eqref{zeroes}, \eqref{noninc}, \eqref{posint}, \eqref{fposbeta}, and }
\int_0^s f(\sigma) d\sigma \leq 0 \, \text{ for all } s\in (0,\beta).
\end{equation}
We claim that both positively-balanced bistable nonlinearities 
and combustion nonlinearities as in Definition~\ref{nonlinearterm} satisfy the above five assumptions.

The claim is easily seen. For a combustion nonlinearity, \eqref{posint} is obviously true,
while the last condition in
\eqref{fnegint} holds (indeed with an equality) for the same $\beta$ as in part (b)
of the definition. On the other hand, for $f$ of bistable type, since $f$ has a unique zero
$\alpha$ in $(0,1)$ and \eqref{noninc} holds, it follows that $f<0$ in $(0,\alpha)$ and 
 $f>0$ in $(\alpha,1)$. Thus, by \eqref{posint} there exists a unique $\beta\in (\alpha,1)$ such that
 $\int_0^\beta f(s)ds=0$. As a consequence, \eqref{fposbeta} and the last condition in
 \eqref{fnegint}  hold for such $\beta$.
}
\end{rema}

To describe the potential energy of our problem,
we first extend $f$ linearly to $(-\infty,0)$ and to $(1,+\infty)$, keeping
its $C^{1,\gamma}$ character.  Consider now the potential $G \in C^2(\R)$ defined by  
$$
G(s):=-\int_0^s f(\sigma)d\sigma \qquad\text{for } s\in\R.
$$
Note that $G'=-f$ in $[0,1]$. 
Since $f'(0)\leq 0$ and $f'(1)\leq 0$ due to hypothesis \eqref{noninc}, we have that
\begin{equation}
\label{Goutside}
G(s)\geq
\begin{cases}
0=G(0)  &\text{ for }  s \leq 0\\ 
G(1) &\text{ for } s \geq 1.
\end{cases}  
\end{equation}

Two other important properties of $G$ are the following. First, by \eqref{posint}, we have
\begin{equation}
\label{1negener}
G(1)<G(0)=0\qquad	\text{and}\qquad G'(0)=-f(0)=0.
\end{equation}
Second,  the last condition in \eqref{fnegint} reads 
\begin{equation}
\label{Gpos}
G\geq 0 \quad\text{ in } [0,\beta].
\end{equation}
On the other hand, since $G \in C^2(\R)$ and $G(0)=G'(0)=0$, we have that 
\begin{equation}
\label{cotasG}
 -Cs^2 \leq G(s) \leq Cs^2  \quad \textrm{ for all } s \in \R ,
\end{equation}
for some constant $C$. 

Figure \ref{potential} shows the shape of the potential $G$ for a typical positively-balanced
bistable nonlinearity.

\begin{figure}[h]
\centering 
\includegraphics{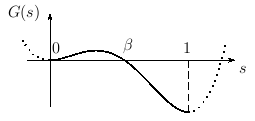}
\caption{The potential $G$ for a positively-balanced bistable $f$} 
\label{potential}
\end{figure}

For $a>0$, consider the weighted Sobolev space $(H^1_a (\RR), ||\, \,||_{a})$ defined by 
$$ H^1_a (\RR)= \{w \in H_{\rm loc}^{1}(\RR) \, : \, ||w||_{a}<\infty \},$$
where  the norm $||\, \, ||_{a}$ is defined by
\begin{equation*}
 ||w||^2_a =
 \int_{\RR} e^{ay}\{w^2+|\nabla w |^2\} dxdy.
\end{equation*}
Notice that, by first truncating and then smoothing, the set
 $ C_c^\infty (\overline{\RR})$ 
---smooth functions with compact support in $\overline{\RR}$--- is dense in  $H^1_a (\RR)$ with the $||\, \, ||_{a}$ norm.

In both the bistable and combustion cases, the 
traveling front $u$ will be constructed  
from a  minimizer $\underline{u}$ to the constraint problem
\begin{equation}
\label{problemconstraint}
E_a(\underline{u})= \INF =:I_a,
\end{equation}
after scaling its independent variables $x$ and $y$. This is the method introduced
by Heinze~\cite{H} to study problem \eqref{problem} in cylinders instead of half-spaces.
The energy functional  
\begin{equation}
\label{FormulaEnergia}
 E_a(w) = \frac{1}{2}\int_{\RR} e^{ay}|\nabla w |^2 dxdy
   + \int_{\FR} e^{ay}G(w(0,y))dy
\end{equation} 
will be minimized over the submanifold 
\begin{equation*}
\BA = \{w \in H^1_a (\RR) : \,  \Gamma_a(w)= 1 \},
\end{equation*} 
where
$$ \Gamma_a(w) = \CIN. $$ 
To carry out this program, we will need to take the constant $a>0$ small
enough depending only on $f$. 

Note an important feature of the functionals $E_a$ and $\Gamma_a$. 
For $w \in \Ha$ and  $t \in \R$,  define $$w^t(x,y):=w(x,y+t)$$
(throughout the paper there is no risk of confusion with the same notation 
used for the explicit front $u^t$ of Theorem~\ref{theoexpl}). We then have
\begin{equation}
 \label{escalaEnergia}
E_a(w^t)=e^{-at}E_a(w)\quad \textrm{and}\quad \Gamma_a(w^t)=e^{-at}\Gamma_a(w).
\end{equation}

The shape of the potential $G$
will lead to the existence of functions $u$ in $H^1_a (\RR)$ 
with negative energy  $E_a(u)<0$. This will be essential in order to prove that
our variational problem attains its infimum. In addition, the constraint will introduce
a Lagrange multiplier and, through it, the apriori unknown speed $c$ of the traveling front. 

As already noted by Heinze~\cite{H}, the above 
variational method produces an interesting formula for the speed $c$.
One has
\begin{equation}\label{formspeedintro}
c=a(1-2I_a),
\end{equation}
where $I_a$ is the minimum value \eqref{problemconstraint}
of the constraint variational problem;  see Remark~\ref{mirema} below.  
This formula leads to the comparison result
between the front speeds for two different comparable nonlinearities ---part (iv) of 
Theorem~\ref{Main}. Note also that the value $a(1-2I_a)$ in \eqref{formspeedintro}
does not depend on which constant $a$ is chosen to carry out the minimization problem.
The reason is that this value coincides with the speed~$c$, and we prove uniqueness of
the speed.
  
\begin{rema}\label{highd}
{\rm
Obviously, the traveling front found in our paper on $\RR$ is also a traveling front
for problem \eqref{parabolic} in the half-space $\RN$, $n\geq 3$, traveling in any given 
unit direction $e$
of $ \R^{n-1}$. That is, it is a solution of the problem
$$\begin{cases}
\Delta u + c\partial_eu=0 &\text{ in } \mathbb{\R}^n_+ \\
\displaystyle{\frac{\partial u}{\partial\nu}} =f(u) &\text{ on }
\partial\mathbb{\R}^n_+,
\end{cases}$$
where $\R^n_+:= \{(x,y) \in (0,+\infty) \times \R^{n-1}\}$. 
An interesting application of the constraint minimization method is that, 
 when carried out in $\R^n_+$ and $n\geq 3$, it produces another type of traveling fronts. Their trace $u(0,\cdotp)$ 
will not depend only on one Euclidean variable  $e$ of $ \R^{n-1}$ 
(as the solutions in the present paper do), but they will be monotone with limits 
1 and 0 at infinity in the direction $e$ and will be even with limits 
0 at  $\pm\infty$ on the $y$ variables orthogonal to $e$. The reason is that 
here one minimizes the energy functional
$$ E_a(w) = \frac{1}{2}\int_{\R^n_+} e^{a\cdotp y}|\nabla w |^2 dxdy
   + \int_{\partial\mathbb{R}^n_+} e^{a\cdotp y}G(w)dy$$
under the constraint on the Dirichlet energy, where $a \in \R^{n-1}$ is a non-unitary direction parallel to $e$. Now, note that the solutions built in this paper, which are constant in 
the $y$ variables orthogonal to $a$, do not belong to the corresponding 
energy space (since they have infinite Dirichlet energy).
}
\end{rema}

The article is organized as follows. In section \ref{variational} we study the variational
structure of problem
\eqref{problem} and prove the existence of a solution pair.
Section \ref{reordenamiento} uses the variational characterization and a monotone decreasing 
 rearrangement of the minimizer to show that the front may be taken to be monotone
 in the $y$ direction. In section \ref{Limites} we establish the limits  at infinity for the obtained 
 solution.
  In section \ref{unicidad} we prove a monotonicity and comparison result
  by means of a maximum principle and the 
sliding method. This result is the key ingredient to prove uniqueness of speed and of the
front. Section~\ref{explicit} deals with the explicit fronts and supersolutions; here we give the
proof of Theorem~\ref{theoexpl}.  Finally,  
in section \ref{pruebafinal}  we collect all results in the paper to establish
Theorem~\ref{Main}.

\section{The variational solution}
\label{variational}

\subsection{Two inequalities}

We prove  trace and  Poincar\'e-type inequalities for functions in
$H^1_a (\RR)$. The existence of a lower bound for $E_a$ on $B_a$ will be a
consequence of the following lemma.
\begin{lem}
\label{ineq2}
Let $a>0$. Then, for every $ u \in H^1_a (\RR)$, we have:
$$
\int_\R e^{ay} u^2(0,y)dy \leq ||u||_a^2  = \int_{\RR}e^{ay}\{u^2+|\nabla u|^2\} dxdy
$$
and
$$ \int_{\RR}e^{ay}u^2dxdy \leq \frac{4}{a^2}
  \int_{\RR}e^{ay}|\nabla u|^2dxdy. 
$$
\end{lem}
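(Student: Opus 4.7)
The plan is to reduce both inequalities to the case of test functions $u \in C_c^\infty(\overline{\RR})$, for which the asserted bounds become one-line consequences of integration by parts and Cauchy-Schwarz, and then extend to general $u \in H^1_a(\RR)$ by the density noted in the paper (smooth functions of compact support in $\overline{\RR}$ are dense in $\Ha$ for the $\|\cdot\|_a$ norm).

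For the trace inequality, I would start from the identity
$$
u^2(0,y) = -\int_0^\infty \partial_x\bigl(u^2(x,y)\bigr)\, dx = -2\int_0^\infty u(x,y)\, u_x(x,y)\, dx,
$$
valid for $u \in C_c^\infty(\overline{\RR})$, multiply by $e^{ay}$ and integrate in $y$. Applying Cauchy-Schwarz (in both variables) and then the elementary bound $2|ab|\leq a^2 + b^2$ yields
$$
\int_\R e^{ay} u^2(0,y)\, dy \leq 2\int_{\RR} e^{ay} |u|\,|u_x|\, dxdy \leq \int_{\RR} e^{ay}\bigl\{u^2 + u_x^2\bigr\}\, dxdy \leq \|u\|_a^2,
$$
as desired.

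For the Poincaré-type inequality, the key observation is that $e^{ay}=\tfrac{1}{a}\partial_y(e^{ay})$, which is where $a>0$ enters. Integrating by parts in $y$ (again for $u \in C_c^\infty(\overline{\RR})$, so the boundary terms at $y=\pm\infty$ vanish) gives
$$
\int_{\RR} e^{ay} u^2\, dxdy = \frac{1}{a}\int_{\RR}\partial_y(e^{ay})\, u^2\, dxdy = -\frac{2}{a}\int_{\RR} e^{ay} u\, u_y\, dxdy.
$$
Cauchy-Schwarz then gives
$$
\int_{\RR} e^{ay} u^2\, dxdy \leq \frac{2}{a}\left(\int_{\RR} e^{ay} u^2\, dxdy\right)^{1/2}\left(\int_{\RR} e^{ay} u_y^2\, dxdy\right)^{1/2},
$$
and dividing by the first factor on the right, squaring, and bounding $u_y^2 \leq |\nabla u|^2$ yields the claimed inequality with constant $4/a^2$.

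The only non-calculational point is the passage from $C_c^\infty(\overline{\RR})$ to general $u\in \Ha$, which I would handle by picking an approximating sequence $u_n\to u$ in the $\|\cdot\|_a$ norm and noting that both sides of each inequality are continuous under this convergence: the right-hand sides by definition of the norm, and the left-hand sides by the inequalities themselves applied to $u_n-u_m$ (for the trace inequality, this shows the traces form a Cauchy sequence in the weighted $L^2$ space on $\FR$ and the limit must agree with the distributional trace). I do not foresee a serious obstacle: the delicate features of the paper (unboundedness in $x$, weight in $y$) are precisely what the two inequalities quantify, and the arguments above handle each of them with the correct weight.
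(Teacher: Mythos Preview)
Your proposal is correct and follows essentially the same argument as the paper: density reduction to $C_c^\infty(\overline{\RR})$, then for the trace inequality write $e^{ay}u^2(0,y)=-\int_0^\infty\partial_x(e^{ay}u^2)\,dx$ and use $2|uu_x|\le u^2+u_x^2$, and for the Poincar\'e inequality integrate $e^{ay}=\tfrac{1}{a}\partial_y(e^{ay})$ by parts and apply Cauchy--Schwarz. The only cosmetic difference is that the paper carries out the second step slice-by-slice in $x$ before integrating, whereas you apply Cauchy--Schwarz directly on $\RR$; both yield the same constant $4/a^2$.
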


\begin{proof}
By density, it suffices to establish both inequalities  for $ u \in
C^{\infty}_c(\overline{\RR}) $.  Since $u$ has compact support, we have
\begin{equation*}
\begin{split}
\int_{\R}e^{ay}u^2(0,y)dy = &
 - \int_{\RR}(e^{ay}u^2)_x dxdy\\ = & -2\int_{\RR}
e^{ay}uu_xdxdy \leq \int_{\RR}e^{ay}(u^2+u_x^2) dxdy. 
\end{split}
\end{equation*}
 This proves the first inequality.
 
Next, for every $ x \geq 0 $ we have that
$$ \int_{\R}e^{ay}u^2(x,y)dy =
-\dfrac{2}{a}\int_{\R}e^{ay}u(x,y)u_y(x,y) dy. $$\\
Thus, by Cauchy-Schwarz,
$$ \int_{\R}e^{ay}u^2(x,y)dy \leq \dfrac{4}{a^2}
\int_{\R}e^{ay}u_y^2(x,y)dy. $$ 
Integrating in $x$ from  0 to $ \infty$, we obtain the second inequality.
\end{proof}

\subsection{Construction of functions with negative energy}
It is fundamental to show that $E_a$ takes a negative value somewhere on the 
constraint $B_a$. This will be accomplished by
constructing test functions $u_{0}\in H^1_a (\RR)$
for which $E_a(u_{0})<0$ if $a$ is positive and small enough. 
We undertake this task next.

Let $u_{0}$ be defined by 
\begin{equation}
\label{funciondeprueba}
u_{0}(x,y):=e^{-dx}h(y),
\end{equation}
  where  $h$ is given by
$$h(y):=
\begin{cases}
1 & \text{if } y\leq 0\\
e^{-amy} & \text{if } y>0,
\end{cases}$$ and the values of $d>0$ and $m\geq 1$ are to be determined. 

The following proposition applies to a class of nonlinearities which includes those 
 of positively-balanced bistable type and of
 combustion type.
\begin{propo}\label{tecn0}
Let $G$ satisfy \eqref{1negener}.
Let $a>0$, $E_a$ be defined by \eqref{FormulaEnergia}, and $u_{0}$ by \eqref{funciondeprueba}.

Then,  for small positive values of $a$ and $d$, and
 for large values of $m$ $($all depending only on $f)$, we have that 
 $u_0\in  H^1_a (\RR)$ and $E_a(u_{0})<0$.
In addition, 
\begin{equation*}
-\infty<\INF<0.
\end{equation*}
\end{propo}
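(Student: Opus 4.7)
The plan is to verify $u_0\in H^1_a(\RR)$ by direct computation, then produce an upper bound on $E_a(u_0)$ of the form ``something $\to G(1)$'' as the parameters are tuned, exploiting the fact that $G(1)<0$ by \eqref{1negener}. The scaling identity \eqref{escalaEnergia} then transfers the negativity to the constraint $B_a$, and the Poincar\'e--trace bounds of Lemma~\ref{ineq2} yield the finite lower bound.

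First I would verify $u_0\in H^1_a(\RR)$ by a direct computation. Since $|\nabla u_0|^2 = d^2e^{-2dx}h^2 + e^{-2dx}(h')^2$ and the $x$- and $y$-integrals separate, all weighted $L^2$-norms reduce to $\int_0^\infty e^{-2dx}dx = 1/(2d)$ multiplied by $\int_\R e^{ay}h^2\,dy = 1/a + 1/(a(2m-1))$ and $\int_\R e^{ay}(h')^2\,dy = a m^2/(2m-1)$. These are finite provided $d>0$, $a>0$, and $m>1/2$, giving $u_0\in\Ha$.

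Next I would compute $E_a(u_0)$. The Dirichlet part evaluates explicitly to
\begin{equation*}
\frac{1}{2}\int_\RR e^{ay}|\nabla u_0|^2\,dxdy = \frac{dm}{2a(2m-1)} + \frac{a m^2}{4d(2m-1)}.
\end{equation*}
For the boundary part, $u_0(0,y)=h(y)$, so
\begin{equation*}
\int_\R e^{ay}G(h(y))\,dy = \frac{G(1)}{a} + \int_0^\infty e^{ay}G(e^{-amy})\,dy,
\end{equation*}
and the second integral is bounded in absolute value by $C/(a(2m-1))$ using $|G(s)|\le Cs^2$ from \eqref{cotasG}. Multiplying through by $a$ gives
\begin{equation*}
a\,E_a(u_0) \leq \frac{dm}{2(2m-1)} + \frac{a^2 m^2}{4d(2m-1)} + G(1) + \frac{C}{2m-1}.
\end{equation*}
Now I fix parameters in order: first choose $d>0$ so small that $d/4<|G(1)|/4$; then choose $m$ so large that $dm/(2(2m-1))\le d/3$ and $C/(2m-1)\le |G(1)|/4$; finally choose $a>0$ so small that $a^2m^2/(4d(2m-1))\le |G(1)|/4$. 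With these choices $a E_a(u_0)\le G(1)/4 < 0$ (say), so $E_a(u_0)<0$.

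To conclude $I_a<0$, note that $\Gamma_a(u_0)>0$; by \eqref{escalaEnergia} the translate $u_0^t$ with $t=\frac{1}{a}\log\Gamma_a(u_0)$ lies in $B_a$ and satisfies $E_a(u_0^t)=E_a(u_0)/\Gamma_a(u_0)<0$. For the lower bound, on $B_a$ one has $\int_\RR e^{ay}|\nabla w|^2\,dxdy=1$, and by \eqref{cotasG} and Lemma~\ref{ineq2},
\begin{equation*}
\int_\FR e^{ay}G(w(0,y))\,dy \geq -C\int_\R e^{ay}w(0,y)^2\,dy \geq -C\,\|w\|_a^2 \geq -C\Bigl(1+\tfrac{4}{a^2}\Bigr),
\end{equation*}
so $E_a(w)\ge \tfrac12 - C(1+4/a^2) > -\infty$. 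The main obstacle is nothing deep but rather the careful bookkeeping of the three small/large parameters $d, a, m$: one must choose them in the correct order so that each unwanted term is beaten by $|G(1)|$ without creating a circular dependence between the choices.
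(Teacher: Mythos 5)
Your proof is correct and follows essentially the same route as the paper: the same test function $u_0=e^{-dx}h(y)$, the same separated computation of the weighted Dirichlet energy, the scaling identity \eqref{escalaEnergia} to land a translate of $u_0$ on $B_a$, and Lemma~\ref{ineq2} together with \eqref{cotasG} for the finite lower bound. The only (harmless) variation is that you bound the tail $\int_0^\infty e^{ay}G(e^{-amy})\,dy$ directly by $C/(a(2m-1))$ using $|G(s)|\le Cs^2$, whereas the paper integrates by parts to evaluate the whole boundary term as $-\frac{1}{a}\int_0^1 s^{-1/m}f(s)\,ds$ and then lets $m\to\infty$; your order of fixing $d$, then $m$, then $a$ is also non-circular and works.
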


\begin{proof}
A simple calculation for the Dirichlet energy shows that, for $m\geq 1$,
$$
\Gamma_a(u_{0})=\dfrac{d}{2a}\left(1+\dfrac{1}{2m-1}\right)
+\dfrac{am^2}{2d(2m-1)}.$$ The potential energy can be computed as
follows:
\begin{equation*}
\begin{split}
\int_{\R}e^{ay}G(u_{0}(0,y))dy & =
\dfrac{G(1)}{a}+\int_0^{+\infty}e^{ay}G(e^{-amy})dy\\
&=\frac{1}{a}\left\{ G(1)+\int_0^{+\infty} (e^{ay})'G(e^{-amy}) dy\right\}\\
& = \frac{1}{a}\int_0^{+\infty} e^{ay}f(e^{-amy})(-am) e^{-amy}dy
=-\dfrac{1}{a} \int_0^1 s^{-1/m} f(s) ds,
\end{split}
\end{equation*}
where we have used property \eqref{cotasG} of $G$ in order 
to integrate by parts. Note that \eqref{cotasG} follows from assumption \eqref{1negener}. 
Therefore,
$$
aE_a(u_{0})=\dfrac{d}{4}\left(1+\dfrac{1}{2m-1}\right)
+\dfrac{a^2m^2}{4d(2m-1)}
-\int_0^1 s^{-1/m} f(s) ds. 
$$

Note that $$\lim_{m\rightarrow
+\infty }\int_0^1  s^{-1/m} f(s)  ds = \int_0^1f(s)ds >0,
$$
since $G(1)<G(0)$. It follows that $E_a(u_{0})<0$ if we first choose $m$ large enough,
then choose $d$ small enough to make the first term above small, and finally $a$ 
also small to handle the second term.

It follows from property \eqref{escalaEnergia} that there exists a unique value $t$ such that
$\Gamma_a(u_0^t)=e^{-at}\Gamma_a(u_0) = 1$.  Since  $u_0^t \in B_a$ and
$E_a(u_0^t)=e^{-at}E_a(u_0)<0$, we have shown that  
$$
\INF<0.
$$
As a consequence of the two inequalities in Proposition \ref{ineq2} and of \eqref{cotasG}, 
we obtain  that $E_a$ is bounded by below on $B_a$. Therefore
$$-\infty<\INF<0,
$$
as claimed.
\end{proof}

\subsection{A special minimizing sequence}

To establish that our constraint variational problem \eqref{problemconstraint} achieves its infimum it will be important to work with the following type of minimizing sequences.

\begin{lem}
\label{acotacionvariable}
Let $G$ satisfy \eqref{Goutside}, \eqref{1negener}, and \eqref{cotasG}. Then, 
there exists a minimizing sequence $\{u_{k}\}\subset B_{a}$ of problem
\eqref{problemconstraint}
such that, for all $k$, $u_{k}\in C_c(\overline{\RR})$ 
has compact support in $\overline{\RR}$ and $0\leq u_{k}\leq1$.
\end{lem}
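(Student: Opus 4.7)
My plan is to start from an arbitrary minimizing sequence $\{v_k\}\subset B_a$ of problem \eqref{problemconstraint}, pass to a smooth compactly supported approximation, truncate the values to $[0,1]$, and finally rescale by a $y$-translation to land back on the constraint manifold.

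First, using the density of $C_c^\infty(\overline{\RR})$ in $(H^1_a(\RR),\|\cdot\|_a)$ stated just before the lemma, I pick (by a diagonal argument) $\tilde v_k\in C_c^\infty(\overline{\RR})$ with $\|v_k-\tilde v_k\|_a\to 0$ sufficiently fast that $\Gamma_a(\tilde v_k)\to 1$ and $E_a(\tilde v_k)\to I_a$. This requires continuity of $E_a$ and $\Gamma_a$ on bounded sets of $H^1_a$: the Dirichlet part is immediate from Cauchy--Schwarz in the weighted space, and for the boundary integral it suffices to combine the trace inequality in Lemma \ref{ineq2} with the quadratic bound $|G(s)|\leq Cs^2$ from \eqref{cotasG} and the linear growth of $G'=-f$, which follows from the linear extension of $f$ outside $[0,1]$.

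Second, truncate: $w_k:=\max(0,\min(1,\tilde v_k))$ lies in $C_c(\overline{\RR})$ with $0\leq w_k\leq 1$, and its support is contained in that of $\tilde v_k$. The chain rule gives $|\nabla w_k|\leq|\nabla\tilde v_k|$ pointwise, so $\Gamma_a(w_k)\leq\Gamma_a(\tilde v_k)$. On the boundary, hypothesis \eqref{Goutside} yields $G(\tilde v_k(0,y))\geq G(w_k(0,y))$ separately in the three regions $\{\tilde v_k<0\}$, $\{0\leq\tilde v_k\leq 1\}$, and $\{\tilde v_k>1\}$, so the boundary potential decreases as well and therefore $E_a(w_k)\leq E_a(\tilde v_k)$.

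Third, rescale to return to $B_a$. The scaling identity \eqref{escalaEnergia} makes the ratio $E_a(\cdot)/\Gamma_a(\cdot)$ invariant under $y$-translations; since the unique translate of $w_k$ lying in $B_a$ has energy at least $I_a$, this gives $E_a(w_k)\geq \gamma_k I_a$, where $\gamma_k:=\Gamma_a(w_k)$. Combining this with $\gamma_k\leq\Gamma_a(\tilde v_k)\to 1$, the bound $E_a(w_k)\leq E_a(\tilde v_k)\to I_a<0$, and dividing through by $I_a<0$ forces $\gamma_k\to 1$ and $E_a(w_k)\to I_a$. Setting $t_k:=(\log\gamma_k)/a$ and $u_k:=w_k^{t_k}$ then yields $\Gamma_a(u_k)=1$ and $E_a(u_k)=E_a(w_k)/\gamma_k\to I_a$, while $u_k$ inherits from $w_k$ continuity, compact support in $\overline{\RR}$, and the bounds $0\leq u_k\leq 1$. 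The one nontrivial point --- and the main obstacle --- is ensuring that $\gamma_k$ returns to $1$ after the truncation: this rests on the strict negativity $I_a<0$ from Proposition \ref{tecn0} together with the scale-invariance of $E_a/\Gamma_a$, without which the above limit argument would collapse.
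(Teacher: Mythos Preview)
Your proof is correct and follows essentially the same route as the paper: approximate by smooth compactly supported functions, truncate to $[0,1]$ using \eqref{Goutside}, and rescale back onto $B_a$ via \eqref{escalaEnergia}, with the strict negativity $I_a<0$ from Proposition~\ref{tecn0} doing the real work. The paper streamlines your third step slightly by first rescaling the smooth approximants exactly onto $B_a$ \emph{before} truncating, which forces $\gamma_k\le 1$ outright; then the sign argument ($s_k\le 0$ together with $E_a(\tilde v_k)<0$) gives $E_a(u_k)\le E_a(\tilde v_k)$ directly, bypassing your sandwich argument for $\gamma_k\to 1$.
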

 
\begin{proof}
By the previous proposition we know that $$-\infty <I_a:= \INF <0.$$ 
Let $\{w_k\} \subset B_a$ be any minimizing sequence. Approximating each $w_k$ 
by a function $v_k$ in $C_c^\infty (\overline{\RR})$ (by first truncating and then smoothing), we may assume that
$\lim_{k\rightarrow +\infty}E_a(v_k)=I_a$ and $\Gamma_a(v_k)=1+ \tau_k$
with $\tau_k \to 0$. Consider now $t_k= \log \{(1+\tau_k)^{1/a}\}$. 
As a consequence of \eqref{escalaEnergia}, we obtain $\Gamma_a(v_k^{t_k})=1$ ---thus 
$\{v_k^{t_k}\} \subset B_a \cap C_c^\infty (\overline{\RR})$---  and $\lim_{k\rightarrow +\infty}E_a(v_k^{t_k})=I_a.$

Next,  to show that we can restrict ourselves to minimizing sequences taking values in $[0,1]$,
let us rename $\{v_k^{t_k}\}$ by $\{v_k\}$. We truncate $\{v_{k}\}$ and
define $\tilde{v}_{k}$ by
$$
 \tilde{v}_k=
\begin{cases}
0 & \text{ if } v_k<0 \\
v_k & \text{ if } v_k\in[0,1]\\
1 & \text{ if } v_k>1.
\end{cases}
$$
It is easy to see that $\tilde{v} _{k}\in C_{c}(\overline{\RR}) \cap H^1_a(\RR)$,
  $\Gamma_a(\tilde{v}_k)\leq \Gamma_a(v_k)$ and, using \eqref{Goutside}, 
  $E_a(\tilde{v}_k)\leq E_a(v_k).$ 
  
Next we claim that we may choose $s_k\leq0$ so that
$u_k(x,y):=\tilde{v}_k(x,y+s_k)$ satisfies $\{u_k\} \subset B_a$. 
This claim follows from \eqref{escalaEnergia} and the fact that 
$0<\Gamma_{a}(\tilde{v}_{k}) \leq 1$ for $k$ large.  
To show this last assertion, note that
$\Gamma_{a}(\tilde{v}_{k}) \leq 1$ is a consequence of $\Gamma_{a}(\tilde{v}_{k}) 
\leq \Gamma_{a}(v_{k}) = 1$. 
On the other hand, if $\Gamma_{a}(\tilde{v}_{k}) =0$ then 
 $\tilde{v}_k\equiv 0$ and thus $v_k\leq 0.$ 
 From this and \eqref{Goutside},  we would get $E_a(v_k)\geq 0$.
 This is a contradiction if $k$ is large, since
 $I_a<0$ and $\{v_k\}$ is a minimizing sequence. 

Finally, notice that since $s_{k}\leq 0$ and $E_{a}(\tilde{v}_{k})
\leq E_a(v_k)<0$ for $k$ large, \eqref{escalaEnergia} gives 
$E_a(u_k) \leq E_a(\tilde{v}_k) \leq E_a(v_k)$. Therefore, $\{u_k\}\subset B_{a}$ is a 
 minimizing sequence made of continuous functions with compact support
and satisfying $0\leq u_k \leq 1$.
\end{proof}

\subsection{Weak lower semi-continuity}
Due to the unbounded character of $\RR$, a delicate issue in this paper is to prove the  weak lower semi-continuity (WLSC) of $E_a$ in~$B_a$. 
The key point is to 
establish the WLSC result for the potential energy ---the difficulty being that the
potential $G(s)$ is negative near $s=1$.

Note that  for any 
sequence $\{u_k\} \in H^1_a (\RR)$ converging weakly in $H^1_a(\RR)$ to a
 function $\underline{u}$,  $u_k\rightharpoonup \underline{u}$, it holds
$\Gamma_a(\underline{u}) \leq \liminf_{k\rightarrow \infty}
\Gamma_a(u_k)$. 
Also, if we split $G= G^+ - G^-$ into its positive
and negative parts, Fatou's lemma gives 
$$ \int_{\partial \RR}e^{ay}G^+(\underline{u})dy \leq \liminf_{k \rightarrow
\infty} \int_{\partial \RR}e^{ay}G^+(u_k)dy. $$
Thus, we need to study the convergence of 
$$\int _{\partial \RR} e^{ay}G^-(u_k)dy.$$

To do this, the key observation (that already appears in \cite{H}) is that any
minimizing sequence cannot spend ``too much time'' (time meaning ``positive 
$y$ variable'') in  $(\beta,1)$, where $G$ may be negative, 
even if the state $u=1$ invades $u=0$.  This key fact will be a consequence of
the presence of the weight $e^{ay}$. We will see that, 
for any given $R>0$ and any minimizing sequence $\{u_{k}\}$, the 
Lebesgue measure of the sets $\{ y> R \, : \, u_{k}(0,y)\geq \beta\}$ and
$\{ x>0, y>R\, : \,u_{k}(x,y)\geq \beta\}$ 
both decrease to zero as $R\rightarrow +\infty$, uniformly in $k$.

To proceed from this, our analysis must be more
delicate than in \cite{H} ---which deals with cylinders---, 
due to the unbounded character of $\RR$ in the $x$ variable. To
handle this difficulty, we need
to recall some facts about Riesz potentials; see \cite[section 7.8]{GT}.
Let $ \Omega$ be a bounded domain in $\R^2$.
Consider the operator on $L^2(\Omega)$  defined by
\begin{equation*}
Vw(z):= \int_{\Omega}\dfrac{w(\overline{z})}{|z-\overline{z}|}d\overline{z}.
\end{equation*}
It is well known (see \cite[section 7.8]{GT}) that 
\begin{equation}
\label{CotaRiesz}
 ||Vw||_{L^2(\Omega)} \leq 2\sqrt{\pi}|\Omega|^{1/2}||w||_{L^2(\Omega)}. 
\end{equation}

Next, we use this inequality to prove a proposition that will be important
to control the super level sets of $u_k$ mentioned above.
The proposition will be applied to the functions $v= e^{ay/2}(u_k-\beta)^+$.

\begin{propo}
\label{poincareGT} 
Given any constant $a>0$, let $v\in C_c(\overline{\RR})\cap H^{1}_{a}(\RR) $ 
have compact support  with ${\rm supp}(v) \subset {\overline{\Omega}}$ for some
bounded domain  $\Omega\subset\RR$. 
For any $R>0$, define 
$$
\Omega_R:=\Omega \cap \{(x,y)\in \RR \,: \, y>R\}  \quad\text{ and }\quad 
\partial ^0 \Omega_R:=\overline{\Omega_R} \cap \partial \RR.
$$

Then,  we have
$$ ||v||_{L^2(\Omega_R)}
\leq \dfrac{4}{\sqrt{\pi}} |\Omega_R|^{1/2}||\nabla v ||_{L^2(\Omega_R)} $$ 
and 
$$ \left( \int_{\partial^0 \Omega_R}v^2(0,y)dy\right)^{1/2} \leq 
\dfrac{\sqrt 8}{\sqrt[4]{\pi}}|\Omega_R|^{1/4}||\nabla v||_{L^2(\Omega_R)}.
$$
\end{propo}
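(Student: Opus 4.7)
The plan is to reduce both inequalities to the Riesz-potential bound \eqref{CotaRiesz}. I would derive the first (a weighted Sobolev--Poincar\'e-type estimate) from the classical pointwise representation of a compactly supported $W^{1,1}$ function in $\R^2$ by its gradient, applied after a double reflection that makes the relevant Riesz potential live on a set of measure $4|\Omega_R|$. The second inequality would then follow from the first via a one-dimensional trace identity and Cauchy--Schwarz.

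For the first inequality I would extend $v$ to a compactly supported function on $\R^2$ by two consecutive even reflections: first set $\bar v(x,y):=v(|x|,y)$ to reflect across $\partial\RR$, and then reflect $\bar v$ evenly across $\{y=R\}$ to produce $\tilde v$. Continuity of $v$ across each reflection line ensures that $\tilde v\in C_c^{0}(\R^2)\cap H^{1}(\R^2)$ with weak gradient equal to the reflected gradient, so $|\nabla\tilde v|$ is the even extension of $|\nabla v|$ in both variables. Denoting by $\tilde\Omega_R$ the support of $\tilde v$, we have $|\tilde\Omega_R|=4|\Omega_R|$ and $\|\nabla\tilde v\|_{L^2(\tilde\Omega_R)}^2=4\|\nabla v\|_{L^2(\Omega_R)}^2$. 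The standard representation for compactly supported functions in the plane (see \cite[\S 7.8]{GT}),
$$
|\tilde v(z)|\leq\frac{1}{2\pi}V(|\nabla\tilde v|)(z),
$$
with $V$ taken over $\tilde\Omega_R$, combined with \eqref{CotaRiesz} applied on $\tilde\Omega_R$, then gives
$$
\|v\|_{L^2(\Omega_R)}^{2}\leq\frac{1}{4\pi^{2}}\|V(|\nabla\tilde v|)\|_{L^2(\tilde\Omega_R)}^{2}\leq\frac{1}{\pi}|\tilde\Omega_R|\,\|\nabla\tilde v\|_{L^2(\tilde\Omega_R)}^{2}=\frac{16}{\pi}|\Omega_R|\,\|\nabla v\|_{L^2(\Omega_R)}^{2},
$$
which is the desired bound with constant $4/\sqrt\pi$.

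For the trace inequality, compact support in $x$ together with the fundamental theorem of calculus gives
$$
v^{2}(0,y)=-\int_{0}^{+\infty}\partial_{x}(v^{2})(x,y)\,dx\leq 2\int_{0}^{+\infty}|v(x,y)|\,|\nabla v(x,y)|\,dx.
$$
Integrating in $y$ over $\partial^{0}\Omega_R\subset\{y\geq R\}$ and observing that $\mathrm{supp}(v)\subset\overline\Omega$ forces any nonzero contribution with $y\geq R$ to lie in $\overline{\Omega_R}$, Fubini and Cauchy--Schwarz give
$$
\int_{\partial^{0}\Omega_R}v^{2}(0,y)\,dy\leq 2\int_{\Omega_R}|v|\,|\nabla v|\,dxdy\leq 2\|v\|_{L^2(\Omega_R)}\|\nabla v\|_{L^2(\Omega_R)}.
$$
Substituting the first inequality and taking square roots produces the constant $\sqrt{8}/\sqrt[4]{\pi}$.

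The main technical obstacle is justifying the pointwise representation for $\tilde v$ when $v$ is only assumed continuous and in $H_{a}^{1}$. Since $C_{c}^{\infty}(\overline{\RR})$ is dense in $H^{1}_{a}(\RR)$ (as noted earlier in the paper), one first proves both inequalities for smooth $v$---for which the double-reflected $\tilde v$ is $C^{1}$ on each of the open half-planes $\{y>R\}$ and $\{y<R\}$ and continuous across $\{y=R\}$, hence lies in $W^{1,1}_{\rm loc}(\R^{2})$ with no singular contribution on the reflection lines---and then passes to the limit by approximation.
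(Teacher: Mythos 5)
Your proof is correct and follows essentially the same route as the paper: both inequalities are reduced to the Riesz-potential bound \eqref{CotaRiesz}, and the trace estimate is obtained identically via the fundamental theorem of calculus in $x$ followed by Cauchy--Schwarz and substitution of the first bound. The only variation is in how the corner geometry is handled for the first inequality: the paper integrates the ray representation $v(z)=-\int_0^\infty D_r v(z+r\omega)\,dr$ only over the quarter-circle of directions pointing into $\{x>0,\,y>R\}$, picking up the factor $2/\pi$ directly on $\Omega_R$, whereas you reflect evenly across both lines and apply the full-circle representation on a set of measure $4|\Omega_R|$ with doubled gradient norm --- two equivalent devices that yield the same constant $4/\sqrt{\pi}$.
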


\begin{proof} 
By density, it is enough to consider $v \in C^{\infty}_{c}(\overline{\RR})$. 
 Since $v$ has compact support,  for $z \in
\Omega_R$ and $\omega=(\omega_1,\omega_2) \in S^1$ with $\omega_1>0$ and $\omega_2>0$, we have 
$$ v(z)= - \int_0^{\infty}D_rv(z+r \omega)dr.$$
Integrating with respect to $\omega$ on the quarter of circle  $$S^1_+:=\{\omega\in S^1
\,:\,\omega_1>0 \textrm{ and }\omega_2>0\},$$ we obtain
$$v(z)= - \frac{2}{\pi}\int_0^{\infty}dr \int_{S^1_+}
D_rv(z+r\omega)d\omega  .$$ This leads to 
$$|v(z)| \leq \frac{2}{\pi}
\int_{\Omega_R}\frac{|\nabla v(\overline{z})|}{|z-\overline{z}|}d\overline{z}.$$ From this, the first inequality
in Proposition \ref{poincareGT} is now a consequence of \eqref{CotaRiesz}.

As for the second 
inequality, we have 
\begin{eqnarray*}
\int_{\partial^0 \Omega_R}v^2(0,y)dy &=& - \int_{ \Omega_R}(v^2)_xdxdy =
 -2 \int_{ \Omega_R}vv_xdxdy \\
& \leq & 2||v||_{L^2(\Omega_R)}||\nabla v||_{L^2(\Omega_R)} \leq
 \dfrac{8}{\sqrt{\pi}}|\Omega_R|^{1/2}||\nabla v||^2_{L^2(\Omega_R)},
\end{eqnarray*}
as claimed.
\end{proof}

Next, note that for any fixed $R >0$, the embedding 
$$H^1_a (\RR\cap \{y < R\})\cap L^\infty(\RR\cap \{y < R\})
 \hookrightarrow L^2_a(\partial \RR\cap
\{y < R\})$$ is compact ($L^2_a$ is the $L^2$ space for the measure $e^{ay}dy$). 
Indeed, a bounded sequence in $H^1_a (\RR\cap \{y < R\})$ 
is also bounded in $H^{1}((0,1)\times (-M,R))$ for 
all $M>0$. This last space 
is compactly embedded in $L^{2}(\FR \cap \{-M <y < R\})$, and thus also in 
$L^{2}_{a}(\FR \cap \{-M <y < R\})$. In addition, since the sequence of functions is 
bounded in $L^{\infty}(\FR)$, their 
$L^{2}_{a}(\FR \cap \{-\infty <y < -M\})$ norms are as small as wished, as
$M\to\infty$ ---since $e^{ay}\leq e^{-aM}$ in this set.

Thanks to the previous compact embedding, to achieve the desired WLSC result for $E_a$, it is
enough to prove that  $$ \int _R^{+\infty} e^{ay}G^-(u_k(0,y))dy$$
can be made ---uniformly on $k$--- as small as we want, provided that $R$
is large enough. This is the content of the next proposition.

\begin{propo}
\label{WLSC} Let $G$ satisfy \eqref{Gpos}, i.e., $G\ge 0$ in 
$[0,\beta]$.  Let $\{u_k\}\subset B_a \subset H^1_a (\RR)$ be a minimizing
sequence for $\inf \{E_a(w)\, : \, w\in B_a\}$  such that 
$u_k\in C_c(\overline{\RR})$ and $0\leq u_k\leq 1$ for all $k$. 
 
 Then, given  $\varepsilon >0$,  there exists $R>0$ such that  we have $$\int
_R^{+\infty} e^{ay}G^-(u_k(0,y))dy \leq \varepsilon $$
for all $k$.
\end{propo}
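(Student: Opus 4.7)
The plan is to exploit hypothesis \eqref{Gpos} together with the Lipschitz continuity of $G$ on $[0,1]$ to reduce the estimate to controlling the Lebesgue measure of the super-level sets $\{u_k>\beta\}\cap\{y>R\}$. The essential gain will come from the weight $e^{ay}$: it converts the uniform bound on $\int e^{ay}u_k^2$ provided by Lemma~\ref{ineq2} into a bound on these measures that is exponentially small in $R$.

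First I would observe that, since $G\geq 0$ on $[0,\beta]$ by \eqref{Gpos}, in particular $G(\beta)\geq 0$, and since $G$ is Lipschitz on $[0,1]$ with some constant $L$ (being $C^2$), the elementary bound $G^-(s)\leq L(s-\beta)^+$ holds for all $s\in[0,1]$ (indeed $G(s)\geq G(\beta)-L|s-\beta|\geq -L(s-\beta)^+$). Substituting $s=u_k(0,y)$ and applying Cauchy--Schwarz gives
\[\int_R^{+\infty} e^{ay}G^-(u_k(0,y))\,dy \leq L\,\Bigl(\int_{A_k(R)} e^{ay}dy\Bigr)^{\!1/2}\Bigl(\int_R^{+\infty} e^{ay}\bigl((u_k(0,y)-\beta)^+\bigr)^2 dy\Bigr)^{\!1/2},\]
where $A_k(R):=\{y>R:u_k(0,y)>\beta\}$. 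The first factor is uniformly bounded in $k$: since $\beta^2\chi_{A_k(R)}\leq u_k(0,y)^2$, we have $\int_{A_k(R)} e^{ay}dy\leq \beta^{-2}\int_{\partial\RR}e^{ay}u_k(0,y)^2 dy\leq \beta^{-2}\|u_k\|_a^2$, and $\|u_k\|_a^2\leq 4/a^2+1$ by the two inequalities of Lemma~\ref{ineq2} together with $\Gamma_a(u_k)=1$.

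For the second factor I would introduce $v_k:=e^{ay/2}(u_k-\beta)^+\in C_c(\overline{\RR})\cap H^1(\RR)$, choose a bounded $\Omega\subset\RR$ containing $\mathrm{supp}(v_k)$, and apply the second part of Proposition~\ref{poincareGT}:
\[\Bigl(\int_R^{+\infty} e^{ay}\bigl((u_k(0,y)-\beta)^+\bigr)^2 dy\Bigr)^{\!1/2}=\|v_k\|_{L^2(\partial^0\Omega_R^k)}\leq \frac{\sqrt 8}{\sqrt[4]{\pi}}\,|\Omega_R^k|^{1/4}\,\|\nabla v_k\|_{L^2(\Omega_R^k)}.\]
A direct computation gives $|\nabla v_k|^2\leq (a^2/2)\,v_k^2+2\,e^{ay}|\nabla u_k|^2\chi_{\{u_k>\beta\}}$, so that $\|\nabla v_k\|_{L^2(\Omega_R^k)}^2\leq (a^2/2)(4/a^2)+2\Gamma_a(u_k)\leq 4$ uniformly in $k$ and $R$.

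The crux, and the step I expect to be the main obstacle, is to show that $|\Omega_R^k|\to 0$ as $R\to+\infty$ uniformly in $k$ with explicit rate. Here I would use $\Omega_R^k\subset\{u_k>\beta\}\cap\{y>R\}$ together with $\beta^2\chi_{\{u_k>\beta\}}\leq u_k^2$ and then pull out the weight:
\[|\Omega_R^k|\leq \frac{1}{\beta^2}\int_{\{y>R\}}u_k^2\,dxdy\leq \frac{e^{-aR}}{\beta^2}\int_{\RR} e^{ay}u_k^2\,dxdy\leq \frac{4\,e^{-aR}}{a^2\beta^2},\]
where the last inequality is the second one of Lemma~\ref{ineq2}. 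Plugging this back, the chain of estimates yields $\int_R^{+\infty} e^{ay}G^-(u_k(0,y))\,dy\leq Ce^{-aR/4}$ uniformly in $k$, which is smaller than the prescribed $\varepsilon$ for $R$ sufficiently large. This exponential decay in $R$ is precisely the place where the weight $e^{ay}$ in the functionals pays off decisively, and it is what replaces the much simpler compactness argument available in the cylindrical setting of~\cite{H}.
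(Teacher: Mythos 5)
Your proposal is correct and follows essentially the same route as the paper's proof: the bound $G^-(s)\leq C(s-\beta)^+$ from \eqref{Gpos}, Cauchy--Schwarz with the weighted trace inequality of Lemma~\ref{ineq2} to control $\int_{A_k}e^{ay}dy$, Proposition~\ref{poincareGT} applied to $e^{ay/2}(u_k-\beta)^+$, and the exponential decay $|A_k|\leq C\beta^{-2}e^{-aR}$ coming from the weight via the second inequality of Lemma~\ref{ineq2}. The only (harmless) differences are that you make the final rate $Ce^{-aR/4}$ explicit and spell out the gradient bound for $e^{ay/2}(u_k-\beta)^+$, which the paper leaves as a one-line remark.
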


\begin{proof}
For  $R>0$, we define $$ A_k := \{(x,y)\in \RR \, : \,y>R, \,
u_k(x,y)> \beta \}. $$

We can estimate the 
measures of  $\partial ^0 A_k$ and $A_k$ respectively as follows
(recall that the notation $\partial^0$ was introduced in Proposition~\ref{poincareGT}). 
First,
$$ e^{aR} |\partial ^0 A_k| \leq \int _{\partial ^0 A_k}e^{ay}dy
\leq \frac{1}{\beta ^2}\int _{\partial ^0 A_k}e^{ay}u_k^2(0,y)dy
\leq \frac{C}{\beta ^2}.$$ The last inequality is a consequence of
$\Gamma_a(u_k)=1$ and the trace inequality in Lemma~\ref{ineq2}. 
In what follows, $C$ denotes different constants
 depending only $a$ (and thus, not on $k$).
Therefore we have $$ |\partial ^0 A_k| \leq \frac{C}{\beta ^2}e^{-aR}$$ and
\begin{equation}
\label{trazaenAn}
 \int _{\partial ^0 A_k}e^{ay}dy \leq \frac{C}{\beta
^2}.
\end{equation}
In an analogous way ---integrating now on all of $A_k$ and not on its boundary,
 and using again Lemma~\ref{ineq2}--- we obtain that 
\begin{equation}
\label{medidaAn}
 |A_k| \leq \frac{C}{\beta ^2}e^{-aR}.
\end{equation}

By \eqref{Gpos}, there exists a constant $C$ such that
$G^-(s)\leq C(s-\beta)^+$ for $s \in [0,1]$.
This and $0\leq u_k\leq1$ lead to
\begin{eqnarray*}
\int_R^{+\infty}e^{ay}G^-(u_k(0,y)) dy &=& \int_{\partial ^0 A_k}e^{ay}G^-(u_k)dy
\leq C \int_{\partial ^0 A_k}e^{ay}(u_k-\beta) dy \\
&\leq & C \left(\int_{\partial ^0 A_k}e^{ay}(u_k-\beta)^2 dy\right)^{1/2} \left(\int_{\partial ^0 A_k}
e^{ay}dy\right)^{1/2}.
\end{eqnarray*}
Because of \eqref{trazaenAn} above,
the last factor is bounded by $C/\beta$, 
a constant independent of~$k.$ Using the second inequality
 in Proposition \ref{poincareGT} applied to the function $e^{ay/2}(u_k-\beta)^+$, we get
\begin{equation}
\label{GnoFatou}
 \int_R^{+\infty}e^{ay}G^-(u_k(0,y))dy \leq
\frac{C}{\beta}|A_k|^{1/4}\left(\int_{A_k}|\nabla
(e^{ay/2}(u_k-\beta))|^2dxdy\right)^{1/2}.
\end{equation}

Using Cauchy-Schwartz and the trace inequality of Lemma \ref{ineq2}, we see
that the integral on the right hand side of \eqref{GnoFatou}
is bounded by a constant independent of $k$. In addition,
as a consequence of inequality \eqref{medidaAn} we have that  
$$\lim_{R\rightarrow \infty} |A_k|= 0.$$ 
Thus, the result follows from \eqref{GnoFatou}.
\end{proof}

We can now show that the infimum is achieved.
\begin{coro}
\label{exist2d} Let $f$ be  of positively-balanced bistable type or of combustion type as in Definition {\rm \ref{nonlinearterm}}.
Then, for every  $a>0$ small enough $($depending only on $f)$, 
there exists  $ \underline{u} \in \BA $ such
that
$$E_a(\underline{u})= \INF.$$
In addition  $0\leq \underline{u}\leq 1$,
$|\{\underline{u}(0,y) : y \in \R\} \setminus [0,\beta] |>0$, and
$\underline{u}$ is  not identically constant.
\end{coro}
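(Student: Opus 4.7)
The plan is to apply the direct method of the calculus of variations to a minimizing sequence of the special form provided by Lemma~\ref{acotacionvariable}, namely $\{u_k\}\subset B_a$ with $u_k\in C_c(\overline{\RR})$ and $0\le u_k\le 1$. I need to accomplish three things: extract a weak limit $\underline u\in H^1_a(\RR)$, prove weak lower semi-continuity of $E_a$ along the sequence, and ensure that the limit remains on the constraint $B_a$.

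For the compactness step, the constraint $\Gamma_a(u_k)=1$ bounds $\nabla u_k$ in $L^2_a(\RR)$, and the second inequality of Lemma~\ref{ineq2} then bounds $\|u_k\|_{L^2_a}$, so $\{u_k\}$ is bounded in $H^1_a(\RR)$ and admits a subsequence with $u_k\rightharpoonup\underline u$. Combining Rellich--Kondrachov on bounded strips $\{-M<y<R\}$ with the compact trace embedding $H^1_a(\RR\cap\{y<R\})\cap L^\infty\hookrightarrow L^2_a(\partial\RR\cap\{y<R\})$ recalled in the text, and a diagonal extraction, I pass to a further subsequence along which $u_k\to\underline u$ a.e.\ on $\overline{\RR}$, and in particular a.e.\ on $\FR$. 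The bounds $0\le\underline u\le 1$ transfer to the limit, and $\Gamma_a(\underline u)\le\liminf_k\Gamma_a(u_k)=1$ follows from weak convergence of the gradients in $L^2_a$.

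The main obstacle is the weak lower semi-continuity of $E_a$, and specifically of the boundary term, because the weight $e^{ay}$ blows up as $y\to+\infty$ precisely where $G$ can be negative. The Dirichlet half is WLSC by weak $L^2_a$ convergence of $\nabla u_k$. Splitting $G=G^+-G^-$, Fatou gives
\[
\int_{\FR}e^{ay}G^+(\underline u(0,y))\,dy\le\liminf_k\int_{\FR}e^{ay}G^+(u_k(0,y))\,dy.
\]
For $G^-$ I invoke Proposition~\ref{WLSC}: given $\varepsilon>0$, choose $R$ so that $\int_R^{+\infty}e^{ay}G^-(u_k(0,y))\,dy\le\varepsilon$ uniformly in $k$, and the same bound holds for $\underline u$ by Fatou. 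On the truncated set $\{y<R\}$ the uniform $L^\infty$ bound on $u_k$, continuity of $G^-$ on $[0,1]$, and a.e.\ convergence allow passage to the limit by dominated convergence, yielding $\int_{-\infty}^R e^{ay}G^-(u_k)\,dy\to\int_{-\infty}^R e^{ay}G^-(\underline u)\,dy$. Letting $\varepsilon\to 0$ and combining, $E_a(\underline u)\le\liminf_k E_a(u_k)=I_a$.

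It remains to restore the constraint and read off the extra properties. If $\Gamma_a(\underline u)=0$ then $\underline u$ is constant on $\RR$, and the only constant lying in $H^1_a(\RR)$ is $0$ (the weight $e^{ay}$ is not integrable on $\RR$); this would force $E_a(\underline u)=0>I_a$, a contradiction. If $0<\Gamma_a(\underline u)<1$, pick $t<0$ with $e^{-at}=1/\Gamma_a(\underline u)>1$, so by \eqref{escalaEnergia} the translate $\underline u^t$ lies in $B_a$ and satisfies $E_a(\underline u^t)=e^{-at}E_a(\underline u)<E_a(\underline u)\le I_a$ (using $E_a(\underline u)\le I_a<0$), again contradicting the definition of $I_a$. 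Hence $\Gamma_a(\underline u)=1$, so $\underline u\in B_a$ and $E_a(\underline u)=I_a$. The bounds $0\le\underline u\le 1$ and non-constancy are now immediate. Finally, if the range of $\underline u(0,\cdot)$ were contained in $[0,\beta]$, then by \eqref{Gpos} both the Dirichlet and the boundary contributions to $E_a$ would be nonnegative, contradicting $E_a(\underline u)=I_a<0$; this yields the required statement on the image.
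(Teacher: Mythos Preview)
Your proof is correct and follows essentially the same route as the paper: take the special minimizing sequence of Lemma~\ref{acotacionvariable}, use Lemma~\ref{ineq2} for boundedness in $H^1_a$, split $G=G^+-G^-$ with Fatou for $G^+$ and Proposition~\ref{WLSC} for the tail of $G^-$, then rule out $\Gamma_a(\underline u)<1$ via the scaling relation \eqref{escalaEnergia} and $I_a<0$. The only minor point is that you use $I_a<0$ without explicitly citing Proposition~\ref{tecn0}, which is where the restriction to $a>0$ small enough enters; otherwise your argument and the paper's coincide.
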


\begin{proof}
For $a>0$ small enough, Proposition \ref{tecn0} shows that
that $-\infty< I_a < 0$,  where 
\begin{equation}
\label{mimin}
I_a:= \INF.
\end{equation} 
By Lemma \ref{acotacionvariable}, there exists $\{u_k\} \subset B_a \subset H^1_a (\RR)$ such
that $ E_a(u_k) \rightarrow I_a$, $u_k\in C_c(\overline\RR)$, and $0\leq u_k\leq 1$. 
Since $\{u_k\} \subset \BA$, by Lemma~\ref{ineq2}
$\{u_k\}$ is bounded in $H^1_a (\RR)$. Therefore, there exists a weakly convergent
 subsequence (still denoted by $\{u_k\}$) such that $u_k\rightharpoonup \underline{u}$ and 
 $\underline{u} \in  H^1_a (\RR)$.  
 
By the WLSC comments made on the beginning of this
subsection on the kinetic energy and the potential energy corresponding to $G^+$,
and by the compactness result of Proposition~\ref{WLSC}, we have that 
$E_a(\underline{u})\leq \liminf_k E_a(u_k)$. Thus, we will have that 
$\underline{u}$ is a minimizer if we show that
$$\underline{u} \in \BA.$$

To show this claim, recall that $\Gamma_a(\underline{u}) \leq 
\liminf_k \Gamma_a(u_k) = 1$.
If we had $\Gamma_a(\underline{u})=0$ then $\underline{u} \equiv 0$,
Thus, we would have
$0 = E_a(\underline{u})\leq \liminf_k E_a(u_k)=I_a<0$, a contradiction.
Hence, $\Gamma_a(\underline{u})\in(0,1]$. 

Let us see now that $\Gamma_a(\underline{u})
\in (0,1)$ is not possible either. Indeed,  
assume that $\Gamma_a(\underline{u}) < 1$. Then, for some $t < 0$,
the function $\underline{u}^t$ defined by $\underline{u}^t(x,y):=\underline{u}(x,y+t)$ satisfies
 $\Gamma_a(\underline{u}^t)=e^{-at}\Gamma_a(\underline{u})=1$, and hence
 $\underline{u}^t\in
\BA $. In addition, $ E_a(\underline{u}^t)=e^{-at}E_a(\underline{u})< E_a(\underline{u})= I_a$,
which is a contradiction. Therefore, we have shown our claim $\underline{u} \in \BA. $

To prove the last statements of the corollary, since $0\leq u_k\leq 1$ the same holds for 
$\underline{u}$.  Moreover, since $\underline{u} \in \BA$, $\underline{u}$ is  not identically constant. Finally, 
if we had $|\{\underline{u}(0,y)\; : \; y \in \R\} \setminus [0,\beta]|=0$, then
$E_a(\underline{u})\geq 0$ by \eqref{Gpos} and this is a contradiction. 
\end{proof}

\subsection{Solving the PDE}
In this part we show that there exists 
a solution pair $(c,u)$ to problem \eqref{problem}, with $c>0$ and $u$ not identically constant. 
The solution is constructed from a minimizer $\underline{u}$ of our variational 
problem, after scaling its independent variables $(x,y)$ to take care of 
a Lagrange multiplier $\lambda_a$. The speed turns out to be
$c=a(1-2I_a)=a(1-2\lambda_a)$; see \eqref{formspeed}. 

\begin{propo}
\label{PDE} Let $f$ be  of positively-balanced bistable type
or of combustion type as in Definition {\rm \ref{nonlinearterm}}.
Let $\underline{u}$ be a minimizer for problem \eqref{problemconstraint} as given by 
Corollary~{\rm \ref{exist2d}}.
Then, there exists $c>0$ and $\mu>0$ such that, defining $$ u(x,y)=\underline{u}(\mu x, \mu y),$$ 
we have that  $(c,u)$ is a solution pair for problem~\eqref{problem},
 $u$ is not identically constant, $0 \leq u \leq1$, and $u\in H^1_{ c}(\RR).$
\end{propo}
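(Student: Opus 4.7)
My plan is to derive the weak Euler--Lagrange equation for the constrained minimizer $\underline{u}$, identify the Lagrange multiplier with the minimum value $I_a<0$ (so in particular the quantity $1-2I_a$ is strictly positive), and then rescale the variables via $(x,y)\mapsto(\mu x,\mu y)$ with $\mu:=1-2I_a$ to convert that equation into \eqref{problem}.

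Because $G\in C^2(\R)$ satisfies the quadratic bound \eqref{cotasG} and $G'=-f$ has linear growth (thanks to the extension of $f$ outside $[0,1]$), the functionals $E_a$ and $\Gamma_a$ are Gateaux differentiable on $\Ha$; and since $D\Gamma_a(\underline{u})[\underline{u}]=2\Gamma_a(\underline{u})=2\ne 0$, the minimizer $\underline{u}$ is a regular point of the constraint $\BA$. The Lagrange multiplier rule then yields $\lambda_a\in\R$ with
\begin{equation*}
(1-2\lambda_a)\int_{\RR}e^{ay}\nabla\underline{u}\cdot\nabla\varphi\,dxdy=\int_{\FR}e^{ay}f(\underline{u})\varphi(0,y)\,dy\qquad\text{for all }\varphi\in\Ha,
\end{equation*}
where I use the trace inequality of Lemma~\ref{ineq2}, the bound $0\le\underline{u}\le1$ from Corollary~\ref{exist2d}, and the linear growth of $f$ to extend the identity from $\varphi\in C_c^\infty(\overline{\RR})$ to all of $\Ha$.

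The crucial and most delicate step is to prove $\lambda_a=I_a$, whence $\mu:=1-2\lambda_a=1-2I_a>1>0$ by Proposition~\ref{tecn0}. The plan is to exploit the scaling \eqref{escalaEnergia}: the shift family $\underline{u}^t$ satisfies $E_a(\underline{u}^t)=e^{-at}I_a$ and $\Gamma_a(\underline{u}^t)=e^{-at}$, so $s\mapsto\inf\{E_a:\Gamma_a=s\}$ is linear in $s$ with value $sI_a$, attained along this curve. Its $t$--derivatives at $t=0$ are $-aI_a$ and $-a$ respectively, so inserting $\varphi=\underline{u}_y$ in the identity above would give $-aI_a=\lambda_a\cdot(-a)$, i.e.\ $\lambda_a=I_a$. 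The main obstacle I foresee is that $\underline{u}_y$ need not belong to $\Ha$ without further regularity; I would bypass this either by testing against the admissible increments $\varphi_\epsilon:=(\underline{u}^\epsilon-\underline{u})/\epsilon\in\Ha$ and letting $\epsilon\to 0$ using the explicit expressions for $E_a(\underline{u}^t)$ and $\Gamma_a(\underline{u}^t)$, or by first invoking the $C^{2,\gamma}$ regularity up to the boundary recalled at the start of Section~\ref{Limites} and testing with $\underline{u}_y$ multiplied by a smooth $y$--cutoff.

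Setting then $c:=a\mu>0$ and $u(x,y):=\underline{u}(\mu x,\mu y)$, the change of variables $x'=\mu x$, $y'=\mu y$, with $\tilde\varphi(x',y'):=\varphi(x'/\mu,y'/\mu)$, converts, after multiplication by $\mu$, the Lagrange identity into
\begin{equation*}
\int_{\RR}e^{cy}\nabla u\cdot\nabla\varphi\,dxdy=\int_{\FR}e^{cy}f(u)\varphi(0,y)\,dy\qquad\text{for all }\varphi\in C_c^\infty(\overline{\RR}),
\end{equation*}
which is precisely the weak form of $\Delta u+cu_y=0$ in $\RR$ together with $-u_x=f(u)$ on $\FR$; the boundary elliptic regularity recalled at the beginning of Section~\ref{Limites} then promotes $u$ to a classical solution pair of \eqref{problem}. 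The same change of variables combined with Lemma~\ref{ineq2} gives $\int_{\RR}e^{cy}|\nabla u|^2=\Gamma_a(\underline{u})=1$ and $\int_{\RR}e^{cy}u^2=\mu^{-2}\int_{\RR}e^{ay'}\underline{u}^2\le 4/(a^2\mu^2)<\infty$, proving $u\in H^1_c(\RR)$; finally $0\le u\le 1$ and the non--constancy of $u$ are immediate from Corollary~\ref{exist2d}.
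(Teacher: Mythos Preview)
Your overall strategy---derive the Euler--Lagrange identity with Lagrange multiplier $\lambda_a$, show $\mu:=1-2\lambda_a>0$, and rescale---is exactly the paper's. The only substantive difference is in how you establish $\mu>0$: you aim for the sharper identity $\lambda_a=I_a$ (so $\mu=1-2I_a>1$ since $I_a<0$), whereas the paper defers that identity to Remark~\ref{mirema} and instead proves $\lambda_a<1/2$ directly by testing the Lagrange identity with $(\underline{u}-\beta)^+\in\Ha$, obtaining
\[
(1-2\lambda_a)\int_{\{\underline{u}>\beta\}}e^{ay}|\nabla\underline{u}|^2
=\int_{\{\underline{u}(0,\cdot)>\beta\}}e^{ay}f(\underline{u})(\underline{u}-\beta)>0,
\]
where both integrals are positive by the last statement of Corollary~\ref{exist2d} and hypothesis~\eqref{fposbeta}. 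This test function needs no regularity of $\underline{u}$ beyond $\Ha$ and no limiting procedure.

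Both of your proposed justifications of $\lambda_a=I_a$ have difficulties as written. For the difference-quotient route, the quadratic expansion gives
\[
D\Gamma_a(\underline{u})[\varphi_\epsilon]=\frac{e^{-a\epsilon}-1}{\epsilon}-\frac{\Gamma_a(\underline{u}^\epsilon-\underline{u})}{\epsilon},
\]
but the second term need not vanish as $\epsilon\to 0$: continuity of translations in $\Ha$ only yields $\Gamma_a(\underline{u}^\epsilon-\underline{u})=o(1)$, not $o(\epsilon)$, and the analogous remainder appears in $DE_a(\underline{u})[\varphi_\epsilon]$. The regularity route is circular in the order you propose it: the $C^{2,\gamma}$ estimate at the start of Section~\ref{Limites} is stated for solutions of \eqref{problem}, yet the Lagrange identity yields a PDE for $\underline{u}$ only once $\lambda_a\neq 1/2$ is known (if $\lambda_a=1/2$ the identity collapses to $f(\underline{u}(0,\cdot))\equiv 0$ and gives no interior equation). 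You can certainly first rule out $\lambda_a=1/2$ exactly as the paper does, then invoke regularity, and then run your Pohozaev-type computation with a $y$-cutoff---but at that point the one-line test with $(\underline{u}-\beta)^+$ is far more economical for the proposition itself.
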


\begin{proof}
Let $\underline{u} \in B_a$ be a minimizer as in Corollary~\ref{exist2d}.
Since
$$
D \Gamma_a(\underline{u})\cdot\underline{u} =\int_{\RR}2e^{ay}|\nabla \underline{u}|^2
dxdy= 2,$$ we deduce that
$D \Gamma_a (\underline{u})\not\equiv 0.$ Therefore, there exists a Lagrange 
multiplier $\lambda_a \in
\R$ such that $DE_a (\underline{u})\cdot\phi= \lambda_a D \Gamma_a (\underline{u})\cdot\phi$ 
for all
$\phi \in H^1_a (\RR)$, that is,
 \begin{equation}
 \label{lagrange}
 (1- 2\lambda_a)\int_{\RR}e^{ay}\nabla \underline{u} \nabla \phi\, dxdy -
\int_{\partial \RR}e^{ay}f(\underline{u}(0,y))\phi(0,y)\, dy =0.
\end{equation}

Let us see that $\lambda_a \neq 1/2$. Indeed, otherwise, from \eqref{lagrange} we deduce 
$f(\underline{u}(0,\cdot)) \equiv 0$ in $\R$. Thus, by assumption \eqref{fposbeta} on $f$, 
we would  have that  either
  $\underline{u}(0,\cdot)\equiv 1$ or that $0\leq \underline{u}(0,\cdot)\leq\beta$.
  The first possibility is not possible since $\underline{u}\equiv  1\not\in H^1_a(\FR).$ 
  On the other hand,
 $0\leq \underline{u}(0,\cdot)\leq\beta$ is ruled out by the last statement of Corollary~\ref{exist2d}. 

Let us consider arbitrary functions $\varphi \in C^\infty _c(\RR)$ vanishing on $\FR$, and 
also functions $\psi \in C^\infty _c(\overline{\RR})$. From \eqref{lagrange}
and $\lambda_a\neq 1/2$, we have
 \begin{equation*}
\int_{\RR}e^{ay}\{\Delta \underline{u} +a\underline{u}_y\} \varphi \, dx dy =0
\end{equation*}
and
\begin{equation*}
\int_{\partial \RR}e^{ay}\{(1-2\lambda_a)\underline{u}_x+f(\underline{u})\}\psi\, dy = 0.
\end{equation*}
As a consequence, and since $\lambda_a\not =1/2$, the pair
$(a,\underline{u})$ is a solution of
\begin{equation*}
\begin{cases}
\Delta \underline{u} + a\underline{u}_y=0&\text{ in } \R^2_+\\
\displaystyle{\frac{\partial \underline{u}}{\partial\nu}}
=\frac{1}{1-2\lambda_a}f(\underline{u}) &\text{ on }
\partial\R^2_+  .
\end{cases}
\end{equation*}

Let us now show that $\lambda_a < 1/2.$
Consider the test function $(\underline{u}-\beta)^+ \in H^1_a (\RR)$.
Plugging it into \eqref{lagrange} we get
\begin{equation}
\label{signolambda} (1- 2\lambda_a)\int_{\{\underline{u}>\beta \}}e^{ay}|\nabla
\underline{u}|^2dxdy - \int_{\{\underline{u}(0,\cdot)>\beta \}}e^{ay}f(\underline{u}(0,y))(\underline{u}(0,y)-\beta)dy =0.
\end{equation}
Recall that $| \{\underline{u}(0,y)\; : \; y \in \R\} \setminus [0,\beta]|>0$, and thus
\begin{equation}
\label{intNOnula}
 \int_{\{\underline{u}>\beta \}}e^{ay}|\nabla \underline{u}|^2dxdy>0.
\end{equation} 
Since $f(\underline{u}(0,y))(\underline{u}(0,y)-\beta)>0$ in $\{\underline{u}>\beta\}$
by \eqref{fposbeta}, 
\eqref{signolambda} and \eqref{intNOnula} lead to $1-2\lambda_a >0.$

Let  $\mu :=1-2\lambda_a>0$ and define
\begin{equation}
\label{SolucionFinal}
u(x,y):=
\underline{u}(\mu x,\mu y) \quad\text{ and } \quad c:=a(1-2\lambda_a)>0.
\end{equation}
We then have a solution pair
$(c,u)$ for  problem \eqref{problem}.

Note that $u \in H^1_{c}(\RR)$ since
$$\int_{\RR}e^{cy}\{|\nabla u|^2 + u^2\}dxdy =
 \int_{\RR}e^{a\overline{y}}\{|\nabla \underline{u}|^2 +\mu^{-2} \underline{u}^2\}d\overline{x}
 d\overline{y} < \infty $$
and $\underline{u} \in H^1_a (\RR).$

Finally, since $f\in C^{1,\gamma}$, the weak solution that we have found
can be shown to be classical, indeed $C^{2,\gamma}$ in all $\overline{\RR}$.
This is explained in the beginning of section~\ref{Limites}.
\end{proof}

\begin{rema}
\label{mirema}
{\rm
It is interesting to note the following
relation, already noted in \cite{H}, 
between the infimum value $I_a$ of our problem \eqref{mimin} and the speed $c$
of the traveling front. The formula, which is not strictly needed  anywhere else in this paper, provides however with an alternative proof of part (iv) of Theorem~\ref{Main}
on the comparison of the front speeds for different nonlinearities.

We claim that
\begin{equation}\label{formspeed}
c=a(1-2I_a)=a(1-2\lambda_a),
\end{equation}
where $a$ and $\lambda_a$ are the parameter and the multiplier in the proof of 
Proposition~\ref{PDE}.  
To show this formula we take a minimizing sequence $\{u_k\}$ made of $C^\infty$ functions with compact support, and we
test \eqref{lagrange} with $\phi=\partial_y u_k\in  H^1_a (\RR)$.
Integrating by parts in order to pass to the limit as $k\to\infty$, we obtain
\begin{equation*}
\begin{split}
0 & =  (1- 2\lambda_a)\int_{\RR}e^{ay}\partial_y \frac{|\nabla \underline{u}|^2}{2}  dxdy +
\int_{\partial \RR}e^{ay}\partial_y G(\underline{u}(0,y))dy
\\
& =-a\frac{1-2\lambda_a}{2}-a \int_{\partial \RR}e^{ay} G(\underline{u}(0,y))dy,
\end{split}
\end{equation*}
where in the last equality we have also integrated by parts.
We deduce that
$$
I_a=E_a(\underline{u})=\frac{1}{2}
\Gamma_a(\underline{u})-\frac{1-2\lambda_a}{2}
=\frac{1}{2}-\frac{1-2\lambda_a}{2}=\lambda_a,
$$
that together with \eqref{SolucionFinal} shows the claim.
}
\end{rema}

\section{Monotonicity}
\label{reordenamiento}
In this section  we show  that the front $u$ built in the previous section
can be taken to be  nonincreasing in the $y$ variable. This fact will be crucial
to show in next section that such nonincreasing front $u$ has  limits 1 and 0
as $y\to\mp\infty$. 

Note that it
suffices to show the existence of a nonincreasing minimizer, since the scaling
used in the proof of Proposition~\ref{PDE} does not change the monotonicity of the front.
As in \cite{H}, the existence of a nonincreasing minimizer will be a consequence of an 
inequality for monotone decreasing rearrangements in a new
variable $z$ defined by
$$
z=e^{ay}/a.
$$
 
\begin{propo}
\label{TMONOTONIA2D} 
The minimizer $\underline{u}$ of Corollary~{\rm \ref{exist2d}} can be taken 
to be nonincreasing in the $y$ variable.
\end{propo}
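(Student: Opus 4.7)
The plan is to follow Heinze~\cite{H}: I would introduce the change of variable $z = e^{ay}/a$, which sends $\R$ to $(0,\infty)$ and the weighted measure $e^{ay}\,dy$ to Lebesgue measure $dz$, and then rearrange in $z$ for each fixed $x$. Set $W(x,z):=\underline{u}(x, a^{-1}\log(az))$. The identities $\underline{u}_y = az\,W_z$ and $e^{ay}\,dy = dz$ convert the functionals to
$$E_a(\underline{u}) = \tfrac12 \int_0^\infty\!\!\int_0^\infty (W_x^2 + a^2 z^2 W_z^2)\,dz\,dx + \int_0^\infty G(W(0,z))\,dz,$$
and $\Gamma_a(\underline{u}) = \int_0^\infty\!\int_0^\infty (W_x^2 + a^2 z^2 W_z^2)\,dz\,dx$. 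Monotonicity of $\underline{u}$ in $y$ is equivalent to monotonicity of $W$ in $z$. For each $x\ge 0$, let $W^*(x,\cdot)$ be the monotone decreasing rearrangement of $W(x,\cdot)$ on $(0,\infty)$, and define $\underline{u}^*(x,y):=W^*(x, e^{ay}/a)$.

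I would then establish three facts. First, by equimeasurability of $W^*(0,\cdot)$ with $W(0,\cdot)$, the boundary potential is preserved: $\int_0^\infty G(W^*(0,z))\,dz = \int_0^\infty G(W(0,z))\,dz$. Second, the $W_x^2$-part of the Dirichlet energy does not increase: the monotone rearrangement is non-expansive from $L^2((0,\infty))$ to itself, so applying this to the difference quotients $(W(x+h,\cdot)-W(x,\cdot))/h$ and letting $h\to 0$ yields $\int (W^*_x)^2\,dz \le \int (W_x)^2\,dz$ for a.e.\ $x$. Third, and most importantly, the weighted term satisfies the Pólya–Szegő-type bound
$$\int_0^\infty z^2 (W^*_z)^2\,dz \le \int_0^\infty z^2 (W_z)^2\,dz$$
for each $x$. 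This is the main technical step, and I would prove it via the coarea formula. After approximation by smooth functions with regular level sets, write $\{W(x,\cdot)>t\} = \bigsqcup_i (a_i(t), b_i(t))$ with $\mu(t):=\sum_i(b_i-a_i)$, so that $\int z^2 (W_z)^2\,dz = \int_0^\infty \big(\sum_k z_k^2 |W_z(z_k)|\big)\,dt$, where $\{z_k\}$ ranges over all endpoints $\{a_i,b_i\}_i$, while $\int z^2 (W^*_z)^2\,dz = \int_0^\infty \mu(t)^2/|\mu'(t)|\,dt$. Applying Cauchy–Schwarz at each level $t$ with $\alpha_k = z_k|W_z(z_k)|^{1/2}$ and $\beta_k = |W_z(z_k)|^{-1/2}$ gives $\sum z_k^2 |W_z(z_k)| \ge (\sum z_k)^2/|\mu'(t)|$. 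The key algebraic step is then $\sum(a_i+b_i) \ge \sum(b_i-a_i) = \mu(t)$, which holds precisely because each $a_i \ge 0$, i.e., because the domain is the half-line $(0,\infty)$. This is exactly what makes the substitution $z = e^{ay}/a$ effective and is the hardest point of the argument.

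Combining these, $E_a(\underline{u}^*) \le E_a(\underline{u}) = I_a$ and $\Gamma_a(\underline{u}^*) \le 1$. If $\Gamma_a(\underline{u}^*) < 1$ were to hold, choosing $t<0$ with $e^{-at} = 1/\Gamma_a(\underline{u}^*)$ the translate $v(x,y):=\underline{u}^*(x,y+t)$ would satisfy $v\in B_a$ and $E_a(v) = e^{-at}E_a(\underline{u}^*) < E_a(\underline{u}^*) \le I_a$ by \eqref{escalaEnergia} and the fact that $E_a(\underline{u}^*) \le I_a < 0$, contradicting the definition of $I_a$. Hence $\Gamma_a(\underline{u}^*) = 1$, so $\underline{u}^*\in B_a$ with $E_a(\underline{u}^*) \le I_a$; therefore $\underline{u}^*$ is itself a minimizer, nonincreasing in $y$ by construction. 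The main obstacle is the weighted Pólya–Szegő inequality, whose proof relies crucially on the half-line domain of $z$.
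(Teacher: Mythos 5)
Your proof is correct, and its core --- the change of variables $z=e^{ay}/a$ followed, for each fixed $x$, by monotone decreasing rearrangement in $z$ --- is exactly the paper's. Two genuine differences are worth comparing. First, the paper does not prove the weighted inequality $\int_0^\infty z^2(W^*_z)^2\,dz\le\int_0^\infty z^2(W_z)^2\,dz$: it quotes it from Landes \cite{L1} (nonnegative weight, nondecreasing in $z$) and from Brock \cite{Br}. Your coarea/Cauchy--Schwarz derivation, resting on the endpoint inequality $\sum_i(a_i+b_i)\ge\sum_i(b_i-a_i)=\mu(t)$, which holds because $a_i\ge0$ on the half-line, is a correct self-contained proof of precisely the case needed, and you rightly identify it as the place where the substitution $z=e^{ay}/a$ does its work. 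Second, you rearrange the minimizer $\underline{u}$ itself, whereas the paper rearranges the special minimizing sequence of Lemma \ref{acotacionvariable} (continuous, $[0,1]$-valued, compactly supported) and then recovers a monotone minimizer as a weak limit via the WLSC results of section \ref{variational}. The paper's ordering keeps every rearrangement step elementary; yours must extend all three inequalities to a general $H^1_a$ function by approximation and, in particular, must justify that the trace of $W^*$ on $\{x=0\}$ is the rearrangement of the trace of $W$ --- obvious for continuous functions, but a point requiring care in $H^1_a(\RR)$, and one you acknowledge only for the P\'olya--Szeg\H{o} step. Finally, before writing $e^{-at}=1/\Gamma_a(\underline{u}^*)$ you should rule out $\Gamma_a(\underline{u}^*)=0$; this is immediate, since equimeasurability preserves the fiberwise $L^2$ norm, so $\underline{u}^*\not\equiv0$, and a nonzero element of $H^1_a(\RR)$ cannot have vanishing Dirichlet energy.
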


\begin{proof} 
We follow  ideas in \cite{H} and perform  
the change of variables $(x,z):= (x,e^{ay}/a),$  which takes $\RR$ into
 $(\R_+)^2= \left\lbrace (x,z)\, :\, x>0,z >0 \right\rbrace$, and the functionals $\Gamma_a$, 
 $E_a,$ into $\tilde{\Gamma}_a$, $\tilde{E}_a$, where
\begin{equation}\label{newfunct}
\tilde{\Gamma}_a(v):=\int\int_{(\R_+)^2}  \left\{|\partial_x v|^2+a^2z^2|\partial_z v|^2\right\} \, dxdz
\end{equation}
and
$$
\tilde{E}_a(v):= \frac{1}{2} \tilde{\Gamma}_a(v)
 + \int_0^{+\infty}G(v(0,z))\, dz.
$$

Let $\{u_k\}$ be the minimizing sequence for problem \eqref{problemconstraint}  given by Lemma~\ref{acotacionvariable}. The functions $\{u_k\}$ take values in $[0,1]$,
are continuous,
and have compact support in $\overline{\RR}$. Let $v_k$ be defined by
 $v_k(x,z):=u_k(x,y)$. Since $v_k$ is nonnegative, continuous, 
 and with compact support in
 $[0,+\infty)^2$, we may consider its monotone decreasing rearrangement in the $z$ variable,
 that we denote by $v_k^*$; see \cite{Ka}. That is, for each $x\geq 0$, we make the usual 
 one-dimensional monotone decreasing rearrangement of the function
 $v_k(x,\cdot)$ of $z>0$. Recall also that if we consider the even extension of $v_k$
 across $\{z=0\}$,  then $v_k^*$ coincides with the Steiner symmetrization
 of $v_k$ with respect to $\{z=0\}$.
 
As a consequence of equimeasurability, we have
$$\int_0^{+\infty}G(v_k^*(0,z))dz = \int_0^{+\infty}G(v_k(0,z))dz.$$
On the other hand, the inequality
$$  \tilde{\Gamma}_a(v_k^*) \leq \tilde{\Gamma}_a(v_k)$$
---and  thus $\tilde{E}_a(v_k^*)\leq \tilde{E}_a(v_k)$---
follows from a result of Landes \cite{L1} for monotone decreasing rearrangements
since the weight $w(x,z)=a^2z^2$
in \eqref{newfunct} is nonnegative and nondecreasing in $z\in (0,+\infty)$. 
It also follows from a previous
result of Brock \cite{Br} on Steiner symmetrization which requires 
$w$ to be nonnegative and $w^{1/2}(x,z)=a|z|$ to be even and convex.
These results require that the weight in front of $|\partial_x v|^2$ (which in our case
is identically one) does not depend on $z$.

Finally, we pull back the sequence $v_k^*$ to the $(x,y)$ variables and
name these functions $u_k^*$. We have that 
$$\Gamma_a(u_k^*)=\tilde{\Gamma}_a(v_k^*)
\leq\tilde{\Gamma}_a(v_k)=1$$ 
and
 $$E_a(u_k^*)=\tilde{E}_a(v_k^*)\leq \tilde{E}_a(v_k)= E_a(u_k). $$
 Let $\underline{u}^*$ be a weak limit in $H^1_a(\RR)$ of a subsequence of 
 $\{u_k^*\}$.  By the WLSC results of the previous section, it is easy to prove that we have necessarily $\underline{u}^* \in B_a$.  This is done exactly 
 as in the proof of Corollary~\ref{exist2d}. Thus, 
 $\underline{u}^*$ 
 is a minimizer which is nonincreasing in the $y$ variable. Note also that 
 it still takes values in $[0,1]$.
\end{proof}

\section{Limits at infinity}
\label{Limites} 

In this section we prove that the front $u$ for problem \eqref{problem} constructed in the previous
sections satisfies \begin{equation} \label{limitesverticales}
\lim_{y\to - \infty}u(x,y)=1\, \textrm{ and }\,
\lim_{y\to + \infty}u(x,y)=0 \qquad \textrm{ for all } x\ge 0,
\end{equation} 
and
\begin{equation}
\label{limitehorizontal} \lim_{x\to + \infty}u(x,y)=0\,\quad\textrm{ for all } y\in \R.
\end{equation} 
To establish \eqref{limitesverticales},  it will be crucial to use that $u$ is nonincreasing 
in the $y$ variable.

In what follows, we will be using the following regularity fact.
Assume that $u$ is a bounded $C^2$ 
function in $\R^2_+$, $C^1$ up to the boundary $\partial\R^2_+$, satisfying our
nonlinear problem
\begin{equation*} 
\begin{cases}
\Delta u+ cu_y = 0&\text{ in } \R^2_+\\ 
\dfrac{\partial u}{\partial\nu}= f(u)&\text{ on }\partial\R^2_+ .
\end{cases}
\end{equation*} 
Since $f$ is $C^{1,\gamma}$ for some $\gamma\in (0,1)$, we have that, for every $R>0$, 
$u\in C^{2,\gamma}(\overline{B^+_R})$ and
\begin{equation}
\label{maxpr2}
\norm{u}_{C^{2,\gamma}(\overline{B_R^+})} \leq C_R,
\end{equation}
for some constant
$C_R$ depending only on $c$, $\gamma$, $R$, and on upper bounds for
$\norm{u}_{L^\infty(B^+_{4R})}$ and $\norm{f}_{C^{1,\gamma}}$.
Here $B_R^+=\{(x,y)\in\R^2\, :\, x>0, |(x,y)|<R\}$.
This estimate is established by easily adapting the proof of \cite[Lemma 2.3(a)]{CSo}.
As a consequence of the estimate, we also deduce that
\begin{equation}
\label{gradbound}
|\nabla u|\in L^\infty (\RR).
\end{equation}

To establish  \eqref{limitesverticales}, we first need the following easy result
on limits as $|y|\to\infty$. It applies to any solution, not only to the variational
one constructed in previous sections.

\begin{lem}
 \label{lindelof}
Assume that $f(0)=f(1)=0$ and that $0\leq u \leq1$ 
is a solution of \eqref{problem}  satisfying $$\lim_{y\to - \infty}u(0,y)=1 \quad\text{and}\quad
\lim_{y\to + \infty}u(0,y)=0.$$

Then, for all $R>0$, we have
\begin{equation}
\label{maximumpr}
\lim_{y\to - \infty}u(x,y)=1, \quad  \lim_{y\to + \infty}u(x,y)=0,
\quad\text{and}\quad \lim_{|y|\to \infty}|\nabla u(x,y)|=0
\end{equation}
uniformly in $x \in [0,R]$.
\end{lem}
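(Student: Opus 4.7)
The plan is a translation-and-compactness argument closed by the strong maximum principle plus Hopf's lemma. I focus on the case $y\to +\infty$, the case $y\to -\infty$ being symmetric. Given an arbitrary sequence $y_k\to +\infty$, set $u_k(x,y):=u(x,y+y_k)$. Each $u_k$ solves \eqref{problem} with $0\leq u_k\leq 1$, and by translation invariance of the PDE in $y$ the Schauder-type estimate \eqref{maxpr2}, applied on balls centered at arbitrary boundary points, yields a uniform bound on $\|u_k\|_{C^{2,\gamma}}$ on every compact subset of $\overline{\RR}$. Arzelà–Ascoli then produces a subsequence converging in $C^2_{\rm loc}(\overline{\RR})$ to some $u_\infty\in C^2(\overline{\RR})$ that still solves \eqref{problem} and satisfies $0\leq u_\infty\leq 1$. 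Since $u(0,y+y_k)\to 0$ pointwise by hypothesis, the trace is $u_\infty(0,\cdot)\equiv 0$; the boundary condition together with $f(0)=0$ then forces $u_{\infty,x}(0,\cdot)\equiv 0$.

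The key step is to conclude that $u_\infty\equiv 0$. Here $u_\infty\geq 0$ satisfies the uniformly elliptic linear equation $\Delta u_\infty+c u_{\infty,y}=0$ and vanishes on $\{x=0\}$. If $u_\infty\not\equiv 0$, the strong maximum principle for $\Delta+c\partial_y$ would give $u_\infty>0$ throughout the interior of $\RR$, and then Hopf's boundary point lemma (valid at any point of $\partial\RR$ since one has an interior ball) would force $u_{\infty,x}(0,y_0)>0$ for every $y_0\in\R$, contradicting the vanishing Neumann data. Hence $u_\infty\equiv 0$.

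Because every subsequence of $\{u_k\}$ admits a further subsequence converging to the \emph{same} unique limit $0$ in $C^2_{\rm loc}(\overline{\RR})$, the whole family $u(\cdot,\cdot+y_k)$ converges to $0$ in $C^2_{\rm loc}$. Restricting to $[0,R]\times\{0\}$ yields the uniform decay of both $u$ and $|\nabla u|$ as $y\to +\infty$. For $y_k\to -\infty$, the same procedure produces a limit $u_\infty$ with $u_\infty(0,\cdot)\equiv 1$ and $u_{\infty,x}(0,\cdot)\equiv 0$; applying the previous argument to $v:=1-u_\infty\geq 0$, which satisfies the same linear equation and has vanishing Dirichlet and Neumann data on $\{x=0\}$, gives $v\equiv 0$, i.e., $u_\infty\equiv 1$. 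I do not anticipate a serious obstacle: the operator $\Delta+c\partial_y$ has constant coefficients, the domain is the half-plane, and the a priori bound \eqref{maxpr2} provides all the compactness we need; the only mild subtlety is verifying that the PDE together with both vanishing trace and vanishing normal derivative forces the limit to be constant, which is exactly what the maximum-principle-plus-Hopf step delivers.
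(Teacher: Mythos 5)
Your proposal is correct and follows essentially the same route as the paper: translate in $y$, use the uniform $C^{2,\gamma}$ estimate \eqref{maxpr2} to extract a $C^2_{\rm loc}$ limit solving the problem with constant trace, and then rule out a nonconstant limit via the strong maximum principle and Hopf's boundary lemma (the paper phrases this as a contradiction argument with a bad sequence $t_k\to-\infty$, while you use the equivalent subsequence principle). The only cosmetic difference is that the paper identifies the limit directly as $u^\infty\equiv 1$ (resp.\ $0$) from the Dirichlet--Neumann overdetermination, exactly as you do.
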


\begin{proof} 
For $t \in \R$ let us define $u^t(x,y):=u(x,y+t)$, also a solution of \eqref{problem}.  
We claim that
$$||u^{t}-1||_{L^{\infty}(B_{R}^{+})} +
||\nabla u^{t}||_{L^{\infty}(B_{R}^{+})} \rightarrow 0 \quad \textrm{ as } t 
\rightarrow -\infty.$$
Assume, by the contrary, that there exist $\varepsilon >0$ and $\{t_{k}\}\subset \R$
 with $t_{k} \rightarrow -\infty$ 
such that
\begin{equation}
\label{convunifAbsurdo}
||u^{t_{k}}-1||_{L^{\infty}(B_{R}^{+})} +
||\nabla u^{t_k}||_{L^{\infty}(B_{R}^{+})} \geq \varepsilon.
\end{equation}

Estimates \eqref{maxpr2} lead to the existence of a subsequence
 $\{t_{k_{j}}\}$ for which $u^{t_{k_{j}}}$ converges in
 $C^2(\overline{B_{R}^{+}})$ to $u^{\infty}$.
 By the hypothesis of the lemma we will have $0\leq u^{\infty}\leq 1$ and 
 \begin{equation*}
\begin{cases}
\Delta u^{\infty} + cu^{\infty} _y = 0&\text{ in } \R^2_+\\ 
u^{\infty} = 1 & \text{ on }\partial\R^2_+ \\
\dfrac{\partial u^{\infty} }{\partial\nu}= f(u^{\infty})=f(1)=0&\text{ on }\partial\R^2_+ .
\end{cases}
\end{equation*} 
From this and Hopf's boundary lemma, we deduce $u^{\infty}\equiv 1$ on $\RR$, 
which contradicts \eqref{convunifAbsurdo}. 

In an analogous way we can show the limits as $y \rightarrow +\infty$.
\end{proof}

We can now prove the existence of limits as $y\to\pm\infty$ for the variational
solution constructed in the last sections.

\begin{lem}
 \label{limitesverticalesdemo}
Let $f$ be  of positively-balanced bistable type 
or of combustion type as in Definition~{\rm \ref{nonlinearterm}}. Let $u$ be 
any front constructed as in Proposition~{\rm \ref{PDE}} from the nonincreasing 
minimizer of  Proposition~{\rm \ref{TMONOTONIA2D}}.  Then,
$$\lim_{y\to - \infty}u(x,y)=1\, \,\textrm{ and }
\lim_{y\to + \infty}u(x,y)=0 \,\, \,\textrm{ for all } x\ge 0.$$
\end{lem}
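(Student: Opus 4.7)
The plan is to establish both limits on the boundary $\{x=0\}$ and then invoke Lemma \ref{lindelof} to propagate them to every vertical line. Since Proposition \ref{TMONOTONIA2D} makes $y \mapsto u(0,y)$ nonincreasing, the limits $\ell_\pm := \lim_{y \to \pm\infty} u(0,y)$ exist; it suffices to show $\ell_+ = 0$ and $\ell_- = 1$.

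The limit at $+\infty$ follows from the weighted Sobolev membership $u \in H^1_c(\RR)$. The trace inequality of Lemma \ref{ineq2} (with $a = c$) gives $\int_\R e^{cy} u(0,y)^2\, dy \leq \|u\|_c^2 < \infty$, while monotonicity gives $u(0,y) \geq \ell_+$ for every $y$. Hence
$$\ell_+^2 \int_0^\infty e^{cy}\, dy \leq \int_0^\infty e^{cy} u(0,y)^2\, dy < \infty,$$
and since $c > 0$ the left-hand integral is infinite unless $\ell_+ = 0$.

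For the limit at $-\infty$ I proceed in two steps. First I identify $\ell_-$ as a zero of $f$. Choose any sequence $t_n \to -\infty$ and set $u_n(x,y) := u(x, y+t_n)$; each $u_n$ solves problem \eqref{problem} with the same speed $c$, and monotonicity produces pointwise convergence $u_n(x,y) \uparrow u_-(x) := \lim_{y \to -\infty} u(x,y)$. The local $C^{2,\gamma}$ estimate \eqref{maxpr2} upgrades this, along a subsequence, to convergence in $C^2_{\mathrm{loc}}(\overline{\RR})$. The limit $u_-$ is independent of $y$, so passing to the limit in $\Delta u_n + c(u_n)_y = 0$ and in the Neumann condition $-(u_n)_x = f(u_n)$ yields $u_-''(x) = 0$ on $(0,\infty)$ with $-u_-'(0) = f(u_-(0))$. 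Boundedness of $u_-$ on $[0,\infty)$ forces $u_-' \equiv 0$, hence $u_- \equiv \ell_-$ with $f(\ell_-)=0$. Second, I rule out every zero of $f$ in $[0,\beta]$ by invoking the last assertion of Corollary \ref{exist2d}: the set $\{\underline{u}(0,y) : y \in \R\} \setminus [0,\beta]$ has positive Lebesgue measure and is in particular nonempty. Through the scaling $u(x,y) = \underline{u}(\mu x, \mu y)$ from \eqref{SolucionFinal}, there exists $y_0$ with $u(0,y_0) > \beta$, whence $\ell_- \geq u(0,y_0) > \beta$ by monotonicity. In the bistable case the zeros of $f$ are $\{0,\alpha,1\}$ with $\alpha < \beta$; in the combustion case they are $[0,\beta] \cup \{1\}$. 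In either case the inequality $\ell_- > \beta$ forces $\ell_- = 1$. With the boundary limits in hand, Lemma \ref{lindelof} yields the stated limits uniformly on every strip $0 \leq x \leq R$.

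The principal obstacle is this last ruling-out step. A Hamiltonian identity --- multiplying $\Delta u + cu_y = 0$ by $u_y$ and integrating in $x$ --- would only yield $G(\ell_-) \leq 0$, which in the combustion case, where $G \equiv 0$ on $[0,\beta]$, fails to distinguish $1$ from the intermediate zeros. It is precisely the extra variational information in Corollary \ref{exist2d}, namely that the minimizer cannot be trapped inside $[0,\beta]$ on the boundary, that pins down $\ell_- = 1$.
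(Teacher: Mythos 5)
Your proposal is correct and follows essentially the same route as the paper: monotonicity gives existence of the limits, the trace inequality of Lemma \ref{ineq2} together with $u\in H^1_c(\RR)$ forces $\ell_+=0$, a translation--compactness argument shows $f(\ell_-)=0$, and the variational fact that the trace of the minimizer is not confined to $[0,\beta]$ combined with monotonicity pins down $\ell_->\beta$, hence $\ell_-=1$, with Lemma \ref{lindelof} propagating the limits to all vertical lines. No gaps.
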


\begin{proof}
By Corollary~\ref{exist2d} we know that 
  $0\le u \le 1$, $u\in H^1_c(\RR)$, and that the set 
$\{u(0,y)\, : \,y\in \R\}$ is not contained in $[0,\beta]$.
We also know that  $u_y(x,y)\leq0$. Therefore, for all $x\ge 0$, 
there exist $L^-(x) \in (\beta,1]$ and 
$L^+(x) \in [0,1]$ such that 
$$
\lim_{y\to - \infty}u(x,y)=L^-(x)\quad \textrm{ and } \quad
\lim_{y\to + \infty}u(x,y)=L^+(x).
$$

Note that $L^+(x)\equiv 0$ is a  consequence of the inequalities of Lemma~\ref{ineq2}.

To prove that $L^-(x)\equiv 1$, by Lemma \ref{lindelof}
it is enough to show that $L^{-}(0)=1$. To do this, we consider 
the sequence of solutions $\{u^k\}$ 
defined by $u^k(x,y):=u(x,y+k)$. Then, as in the proof of Lemma \ref{lindelof}, 
as $k\to -\infty$ there exists a convergent 
subsequence to a solution $u^{\infty}$ of 
\begin{equation}
 \label{convunif2}
\begin{cases}
\Delta u^{\infty} + cu^{\infty} _y = 0&\text{ in } \R^2_+\\ 
u^{\infty} = L^{-}(0) & \text{ on }\partial\R^2_+ \\
\dfrac{\partial u^{\infty} }{\partial\nu}= f(L^{-}(0))&\text{ on }\partial\R^2_+ .
\end{cases}
\end{equation}
Since $u^{\infty}(x,y)=L^{-}(x)$, we have that $\partial_y u^{\infty}(x,y)=
\partial_{yy}u^{\infty}(x,y)\equiv 0$. Therefore, the first equation in \eqref{convunif2} leads to
 $\partial_{xx}u^{\infty}(x,y)=0$ 
for all $x >0$ and $y\in\R$. 
Since $u^{\infty}$ is bounded, then it must be constant equal to $L^{-}(0)$,
its value at $x=0$.

This and the last equation in \eqref{convunif2}  lead to 
$0=-\partial_{x}u^{\infty}(0,y)=f(L^{-}(0))$. Since $L^{-}(0)\in 
(\beta,1]$, and on this interval, $f$ vanishes only at 1 by hypothesis \eqref{fposbeta}, 
we conclude that $L^{-}(0)=1$.
\end{proof} 

It remains to prove \eqref{limitehorizontal} on the limits as $x\to +\infty$.
This  is  a simple consequence of the 
Harnack inequality and the fact that the variational solution lies in
$H^1_c(\RR)$.
 
 \begin{lem}
 \label{limitehorizontaldemo}
Let $f$ be  of positively-balanced bistable type
or of combustion type as in Definition~{\rm \ref{nonlinearterm}}.
Let $u$ be any front constructed as in Proposition~{\rm \ref{PDE}}. Then,
$$ \lim_{x\to + \infty}u(x,y)=0\,\quad\textrm{ for all } y\in \R.  $$
\end{lem}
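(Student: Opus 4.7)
The plan is to combine the integrability of $u$ in the weighted space $H^1_c(\RR)$ with an interior mean value inequality for the equation $\Delta u + cu_y = 0$. Fix $y_0 \in \R$ and consider the horizontal strip $S_{y_0}:=\{(x,y)\in\RR : |y-y_0|<1\}$. On $S_{y_0}$ the weight $e^{cy}$ is bounded below by the positive constant $e^{c(y_0-1)}$, so $u\in H^1_c(\RR)$ immediately gives $\int_{S_{y_0}}u^2\,dxdy<\infty$, and in turn
$$\int_{S_{y_0}\cap\{x>R\}}u^2\,dxdy\longrightarrow 0\quad\text{as }R\to+\infty.$$

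The second ingredient is that $u\geq 0$ is a bounded classical solution of the uniformly elliptic constant-coefficient equation $\Delta u + cu_y=0$ in the interior of $\RR$. For every $x_0>1$ the closed unit ball $\overline{B_1(x_0,y_0)}$ lies in $S_{y_0}\cap\RR$, and the classical interior mean value inequality for nonnegative subsolutions (a byproduct of Moser iteration; see e.g.~\cite[Theorem~8.17]{GT}) yields
$$u(x_0,y_0)\leq C\left(\int_{B_1(x_0,y_0)} u^2\,dxdy\right)^{1/2}$$
for a constant $C$ depending only on $c$. Combining the two displays, $u(x_0,y_0)\to 0$ as $x_0\to +\infty$, and since $y_0\in\R$ was arbitrary the lemma is proved.

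There is no serious obstacle here: the nonnegativity, boundedness, and classical regularity of $u$ have all been established in previous sections, and the coefficients of the equation are constant, so the mean value inequality applies with a constant independent of $x_0$. If one prefers to avoid Moser/Harnack, the gradient bound \eqref{gradbound} gives an elementary alternative: should $u(x_0,y_0)\ge\varepsilon$, Lipschitz continuity forces $u\ge\varepsilon/2$ on a ball of radius $\varepsilon/(2\norm{\nabla u}_{L^\infty})$ about $(x_0,y_0)$, producing a lower bound on $\int_{B_1(x_0,y_0)}u^2$ that contradicts the $L^2$ decay above for $x_0$ large.
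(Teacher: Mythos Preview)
Your proof is correct and follows essentially the same route as the paper: use $u\in H^1_c(\RR)$ to get $L^2$ decay of $u$ on horizontal strips, then invoke an interior elliptic estimate to upgrade this to pointwise decay. The paper cites the Harnack inequality (bounding $\sup u$ by $\inf u$ and then by the $L^2$ average) where you cite the local maximum principle \cite[Theorem~8.17]{GT} directly; your elementary alternative via the gradient bound \eqref{gradbound} is a nice bonus not present in the paper.
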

 
\begin{proof}
Take any $y_0\in\R$. Since $u\in H^1_c(\RR)$ we have that
\begin{equation}
\label{integralcaja}
 \lim_{x_0\to + \infty} \int_{y_0 -1}^{y_0 +1}dy\int_{x_0 -1}^{+\infty}e^{cy} u^2 dx =0.
\end{equation}

Recall that $0\leq u\leq 1$ satisfies $\Delta u +cu_y=0$ in $\RR$. Thus, by the
Harnack inequality, for any $x_0>2$ we have 
$$
\sup_{B_{1}(x_0,y_0)}u\leq C\inf_{B_{1}(x_0,y_0)}u \leq 
C \int_{B_{1}(x_0,y_0)} u dxdy \leq C \left( \int_{B_{1}(x_0,y_0)} u^2 dxdy\right)^{1/2}$$
for different constants $C$ independent of $x_0$.
Using \eqref{integralcaja}, it follows that
$$\lim_{x_0\to + \infty}u(x_0,y_0)=0,$$
as claimed.
\end{proof}

\section{Uniqueness of speed and of solution with limits}
\label{unicidad}

In the first part of this section we establish a useful comparison principle,
Proposition~\ref{l3}, in the spirit of one in \cite{CSo}. It will lead first to the asymptotic
bounds on fronts stated in our main theorem (after building appropriate comparison
barriers in next section). Secondly, it will be used in the second part of this present section 
to establish a key result, Proposition~\ref{uniqueness} below.

Proposition~\ref{uniqueness} will have several important applications. First, the monotonicity
in $y$ of every solution with limits. Second, the uniqueness
of a speed and of a front with limits. And third, the comparison result between speeds 
corresponding to different ordered nonlinearities. The proof of the proposition
follows the powerful sliding method of Berestycki and Nirenberg~\cite{BN1}.

\subsection{A maximum principle}

We start with the following easy lemma.

\begin{lem}
\label{lemamio} 
Let $w$ be a $C^2$ function in $\R^2_+$, bounded by below, continuous up to $\partial \RR$,
and satisfying 
\begin{equation*} 
\Delta w + cw_y \leq  0  \quad\text{ in } \R^2_+ 
\end{equation*}
for some constant $c\in\R$. Assume also that $w\geq 0$ on $\partial \R^2_+$ and that, for every $R>0$, 
\begin{equation}
\label{liminf0}
\liminf_{|y|\to +\infty} w(x,y)\geq 0 \quad\text{ uniformly in $x\in[0,R]$}.  
\end{equation}  
Then, $w\geq 0$ in $\R^2_+$. 
\end{lem}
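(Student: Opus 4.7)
The plan is to reduce to the weak maximum principle on a bounded domain by adding a small barrier that grows in $x$, so as to rule out that the infimum of $w$ is approached as $x\to +\infty$. The assumption \eqref{liminf0} already controls the limits as $|y|\to\infty$ on every vertical strip, so only the $x$-direction remains uncontrolled, and the natural barrier to handle it is the function $x$ itself, which is annihilated by the operator $L:=\Delta+c\partial_y$ and vanishes on $\partial\RR$.

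Concretely, I would fix $\varepsilon>0$ and define $w_\varepsilon(x,y):=w(x,y)+\varepsilon x$. Since $L(\varepsilon x)=0$, we still have $Lw_\varepsilon\le 0$ in $\RR$, and $w_\varepsilon(0,y)=w(0,y)\ge 0$ on $\partial\RR$. Let $M\le 0$ be a lower bound for $w$ and set $R_\varepsilon:=(|M|+1)/\varepsilon$, so that $w_\varepsilon\ge M+\varepsilon x\ge 1$ whenever $x\ge R_\varepsilon$. Apply assumption \eqref{liminf0} with $R=R_\varepsilon$ to produce $Y_\varepsilon>0$ such that $w(x,y)\ge -\varepsilon$, and hence $w_\varepsilon(x,y)\ge -\varepsilon$, for every $x\in[0,R_\varepsilon]$ and $|y|\ge Y_\varepsilon$. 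On the bounded rectangle $\Omega_\varepsilon:=(0,R_\varepsilon)\times(-Y_\varepsilon,Y_\varepsilon)$ the four pieces of $\partial\Omega_\varepsilon$ then satisfy $w_\varepsilon\ge -\varepsilon$: on $\{x=0\}$ because $w_\varepsilon=w\ge 0$, on $\{x=R_\varepsilon\}$ because $w_\varepsilon\ge 1$, and on $\{y=\pm Y_\varepsilon\}$ by the previous estimate. Since $L$ is a constant-coefficient operator with no zero-order term, the classical weak maximum principle for supersolutions on the bounded domain $\Omega_\varepsilon$ yields $w_\varepsilon\ge -\varepsilon$ in all of $\overline{\Omega_\varepsilon}$.

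Combining the inequality in $\overline{\Omega_\varepsilon}$ with the pointwise bounds $w_\varepsilon\ge 1$ on $\{x\ge R_\varepsilon\}$ and $w_\varepsilon\ge -\varepsilon$ on $\{|y|\ge Y_\varepsilon,\ x\in[0,R_\varepsilon]\}$, I obtain $w_\varepsilon\ge -\varepsilon$ throughout $\RR$, which unwinds to $w(x,y)\ge -\varepsilon(1+x)$ for every $(x,y)\in\RR$. Fixing $(x,y)$ and letting $\varepsilon\to 0$ gives $w(x,y)\ge 0$, as claimed.

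The only delicate point is choosing the barrier: it must be $L$-harmonic (to preserve the supersolution inequality), nonnegative on $\partial\RR$ (to preserve $w_\varepsilon\ge 0$ there), and coercive in $x$ (to confine the possible negativity of $w_\varepsilon$ to an $x$-bounded strip). The function $\varepsilon x$ does all three simultaneously, so the rest of the proof is a routine application of the weak maximum principle on $\Omega_\varepsilon$; no strong maximum principle or Hopf lemma is needed at this level of generality.
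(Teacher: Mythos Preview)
Your proof is correct and follows essentially the same strategy as the paper: use a barrier linear in $x$ to confine possible negativity to a bounded rectangle, apply the weak maximum principle there, and then send $\varepsilon\to 0$. The only cosmetic difference is that the paper divides by $x+1$ (setting $\overline w:=w/(x+1)$, which satisfies $\Delta\overline w+\tfrac{2}{x+1}\,\overline w_x+c\,\overline w_y\le 0$) instead of adding $\varepsilon x$; both routes yield exactly the intermediate inequality $w\ge -\varepsilon(1+x)$ before letting $\varepsilon\to 0$. Your additive barrier is marginally more direct, since it avoids computing the transformed operator, but the two arguments are otherwise interchangeable.
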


\begin{proof}
Consider the new function
$$
\overline w=\frac{w}{x+1} \quad\text{ for } x\geq 0,y\in \R.
$$
It satisfies
$$
\Delta \overline w + \frac{2}{x+1}\overline w_x+c\overline w_y\leq 0 \quad \textrm{ in } \RR.
$$

Let $\varepsilon >0$. Since $w$ is bounded below,  if $R$ is sufficiently large we have that 
\begin{equation}\label{ineqeps}
\overline w(x,y)\geq -\varepsilon
\end{equation}
for $x=R$. By assumption \eqref{liminf0},
we also have \eqref{ineqeps} for $x\in [0,R]$ and $|y|=S$ if 
$S$ is large enough (depending on $R$). 
Since,  \eqref{ineqeps} also holds
for $x=0$, the maximum principle applied in $(0,R)\times(-S,S)$ gives that 
$\overline w \geq -\varepsilon$ in $(0,R)\times(-S,S)$. 

Letting $S\to\infty$ we deduce that $\overline w \geq -\varepsilon$ in $(0,R)\times\R$. Now, letting  
$R\to\infty$ we conclude that $\overline w \geq -\varepsilon$ in $\RR$ for any $\varepsilon>0$.
Thus $\overline w \geq 0$  in $\RR$ and this finishes the proof.
\end{proof}

The following  maximum principle (in the spirit of one in \cite{CSo}) is a key ingredient
in the remaining of this section.  It will be applied to the difference of two solutions (and also of a supersolution and a solution) of our nonlinear problem.

\begin{propo}
\label{l3} 
Let $c\in\R$ and  $v$ be a $C^2$ bounded function in $\overline{\R^2_+}$
satisfying 
\begin{equation*} 
\Delta v + cv_y \leq 0\quad\text{ in } \R^2_+
\end{equation*} 
and that, for all $R>0$, 
\begin{equation}
\label{lim0}
\lim_{|y|\rightarrow\infty} v(x,y)= 0 \quad \mbox{  uniformly in } x\in [0,R]. 
\end{equation}
Finally, assume that there exists a nonempty set $H\subset\R$ such that
$v(0,y)> 0$ for $y\in H$, 
\begin{equation} \label{bdrycond}
\dfrac{\partial v}{\partial\nu}+ d(y)v\geq 0\quad\text{ if } y\not\in H
\end{equation}
and
\begin{equation}
\label{cotanewmann}
d(y)\ge 0 \quad\textrm{ if } y\not\in H,
\end{equation}
for some continuous function $d$ defined on $\R\setminus H$.
 
Then, $v>0$ in $\overline{\R^2_+}$. 
\end{propo}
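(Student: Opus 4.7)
My plan is to establish $v\geq 0$ in $\overline{\RR}$ first, and then upgrade to strict positivity $v>0$ using the strong minimum principle together with Hopf's lemma and the hypothesis that $H\neq\emptyset$. The main difficulty in the first stage is that the Robin-type inequality does not force $v\geq 0$ on $\partial\RR$ directly (so Lemma \ref{lemamio} cannot be applied to $v$ at once), and $v$ is not assumed to decay as $x\to\infty$, so its infimum might a priori fail to be attained at a finite point.

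My approach to proving $v\geq 0$ is to show first that the boundary infimum $m_b:=\inf_{y\in\R}v(0,y)$ is non-negative, and then to apply Lemma \ref{lemamio} to $v$ itself to conclude $v\geq 0$ in $\overline{\RR}$. Suppose, for contradiction, that $m_b<0$. Since $v(0,\cdot)$ is continuous and satisfies $v(0,y)\to 0$ as $|y|\to\infty$ (the $x=0$ case of the limit hypothesis), $m_b$ is attained at some $y^*\in\R$, and necessarily $y^*\notin H$ since $v(0,y)>0$ on $H$. The key reduction is to apply Lemma \ref{lemamio} to the shifted function $w:=v-m_b$, which is a bounded-below $C^2$ supersolution of $\Delta+c\partial_y$, is non-negative on $\partial\RR$ by definition of $m_b$, and satisfies $\liminf_{|y|\to\infty}w=-m_b>0$ uniformly on each strip $x\in[0,R]$. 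The lemma yields $w\geq 0$, i.e., $v\geq m_b$ in $\overline{\RR}$. Combined with $v(0,y^*)=m_b$, this means the global minimum of $v$ on $\overline{\RR}$ is attained at the boundary point $(0,y^*)$.

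Hopf's lemma now produces the desired contradiction. Since $v(x,y)\to 0\neq m_b$ as $|y|\to\infty$, the function $v$ cannot be identically $m_b$ on any interior ball touching $(0,y^*)$ (otherwise the strong minimum principle would force $v\equiv m_b$ on all of $\RR$). Hopf's lemma therefore gives $-v_x(0,y^*)<0$, i.e., $v_x(0,y^*)>0$. On the other hand, the Robin hypothesis $-v_x(0,y^*)+d(y^*)v(0,y^*)\geq 0$, combined with $d(y^*)\geq 0$ and $v(0,y^*)=m_b<0$, forces $v_x(0,y^*)\leq d(y^*)m_b\leq 0$, incompatible with the Hopf inequality. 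Hence $m_b\geq 0$, and applying Lemma \ref{lemamio} directly to $v$ (whose boundary values are now non-negative) gives $v\geq 0$ in $\overline{\RR}$.

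To upgrade $v\geq 0$ to $v>0$, I would use the following dichotomy: if $v$ vanishes at some interior point, the strong minimum principle applied to the non-negative supersolution $v$ forces $v\equiv 0$ on $\RR$, contradicting $v(0,y)>0$ on $H\neq\emptyset$. If instead $v$ vanishes at a boundary point $(0,y_0)$, then $y_0\notin H$, and Hopf's lemma (applicable since $v\not\equiv 0$) gives $v_x(0,y_0)>0$, while the Robin condition at this zero reduces to $v_x(0,y_0)\leq d(y_0)\cdot 0=0$, again a contradiction. Therefore $v>0$ on all of $\overline{\RR}$. The main obstacle throughout is the control of the minimum at infinity in the $x$-direction; the preliminary translation by $m_b$ in Stage 1, followed by Lemma \ref{lemamio}, is precisely what converts an a priori non-attained infimum of $v$ into the attained global minimum needed for the direct Hopf-lemma argument.
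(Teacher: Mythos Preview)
Your argument is correct and follows essentially the same route as the paper's proof: define the boundary infimum, apply Lemma~\ref{lemamio} to the shifted function to make the boundary minimum a global minimum, then use Hopf's lemma together with the Robin inequality and $d\ge 0$ to rule out a negative boundary infimum, and finally upgrade $v\ge 0$ to $v>0$ via the strong maximum principle and Hopf. The only cosmetic difference is that the paper applies Lemma~\ref{lemamio} once to $v-A$ (yielding $v\ge A$ regardless of the sign of $A$), whereas you invoke it once under the contradiction hypothesis and once again after concluding $m_b\ge 0$; both organizations are valid.
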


\begin{proof}
We need to prove that $v\geq 0$ in $\overline{\R^2_+}$.  It then follows  that  $v>0$ in 
$\overline{\RR}$.
Indeed, since $H$ is nonempty, $v$ cannot be identically zero. If we assume that
$v=0$ at some point $(x_1,y_1)\in\overline{\RR}$, we obtain a contradiction using
the strong maximum principle (if $x_1>0$)
and using the Hopf's boundary lemma and \eqref{bdrycond} (if $x_1=0$,
since then $y_1\not\in H$ because $v(0,y_1)=0$).

Let
$$
A:=\inf_{\partial\RR}v.
$$
By \eqref{lim0} used with $x=0$, we have $A\leq 0$.  Thus, we can apply Lemma \ref{lemamio} to $w:=v-A$. We deduce
that $v\geq A$ in all of $\RR$. 

It only remains to prove that $A\geq 0$.
By contradiction, assume that $A<0$. Then, by its definition and since $v(0,y)\to 0$ as 
$|y|\to\infty$, we have that the infimum $A$ of $v$ on $\partial\RR$ is achieved at some point
$(0,y_0)$. Since we have proved that $v\geq A$ in all of $\RR$, then $(0,y_0)$ is also a 
minimum of $v$ in all $\overline\RR$. Since $v(0,y_0)=A<0$, $v$ is not identically constant, and thus
the Hopf's boundary lemma gives that
$-v_x(0,y_0)<0$. This is a contradiction with \eqref{bdrycond} and \eqref{cotanewmann} ---since 
$y_0\not\in H$ because $v(0,y_0)<0$.
\end{proof}

\subsection{Uniqueness}

The goal of this section is to establish uniqueness of the traveling speed, as well as
uniqueness ---up to
vertical translations--- of solutions to \eqref{problem}
which have limits 1 and 0, as $y\to\mp\infty$, on 
$\partial\R^2_+.$ 

We also prove in this section that every solution with the above limits is necessarily
decreasing in $y$.

All these three results will follow from the following proposition
---an analogue of Lemma 5.2 in \cite{CSo}.

\begin{propo}
\label{uniqueness}
Assume that $f$ satisfies \eqref{zeroes} and \eqref{noninc}, and let $c\in \R$. 
Let $u_1$ and $u_2$ be, respectively, a supersolution and a solution of \eqref{problem}
such that
$$
 0\leq u_i \le 1 \quad\text{and}\quad u_i(0,0)=1/2 
$$ 
for $i=1,2$. Assume that, for $i=1,2$ and all $R>0$,
\begin{equation}\label{bothlim}
\lim_{y\rightarrow -\infty}u_i(x,y)=1\, \mbox{ and }\, 
\lim_{y\rightarrow +\infty}u_i(x,y)=0 \quad\text{ uniformly in $x\in [0,R]$.}
\end{equation} 
For $t>0$, consider
$$
u_2^t(x,y):=u_2(x,y+t).
$$

Then, 
\begin{equation}\label{concl}
u_2^t\leq u_1\, \text{ in } \overline{\R^2_+}  \quad\text{for every } t>0.
\end{equation}  
As a consequence, $u_2\equiv u_1$ in $\overline{\R^2_+}$.

In addition, for any solution $u_2$ satisfying $0\leq u_2\leq 1$ and \eqref{bothlim},
we have  $\partial_y u_2<0$ in $\overline{\R^2_+}$.
\end{propo}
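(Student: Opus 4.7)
The plan is to prove \eqref{concl} by the sliding method of Berestycki--Nirenberg, using Proposition \ref{l3} as the workhorse; the remaining statements then follow quickly. For each $t\ge 0$ set $v_t:=u_1-u_2^t$. Then $\Delta v_t+c\partial_y v_t\le 0$ in $\RR$ while on $\FR$
\[
-\partial_x v_t \ge f(u_1)-f(u_2^t) = -d_t(y)\,v_t, \qquad d_t(y):=-\int_0^1 f'\bigl(\tau u_1(0,y)+(1-\tau)u_2^t(0,y)\bigr)\,d\tau,
\]
with $d_t$ continuous, and $v_t\to 0$ as $|y|\to\infty$ uniformly in $x\in[0,R]$ by \eqref{bothlim}. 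Thus Proposition \ref{l3} applies to $v_t$ whenever $H:=\{y:v_t(0,y)>0\}$ is nonempty and $d_t\ge 0$ on $\R\setminus H$.

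\emph{Initial step.} I will first show $v_t>0$ on $\overline{\RR}$ for $t$ large. Fix the $\delta$ of \eqref{noninc} and, using \eqref{bothlim}, pick $Y>0$ with $u_i(0,y)>1-\delta$ for $y<-Y$ and $u_i(0,y)<\delta$ for $y>Y$, $i=1,2$. For $t>2Y$ one has $v_t(0,0)=1/2-u_2(0,t)>0$, so $H\neq\emptyset$. If $y\notin H$ and $y\ge -Y$, then $u_2^t(0,y)=u_2(0,y+t)<\delta$ and $u_1(0,y)\le u_2^t(0,y)$ place both values in $[0,\delta]$; if $y<-Y$ instead, then $u_1(0,y)>1-\delta$ and $u_2^t(0,y)\ge u_1(0,y)$ place both in $[1-\delta,1]$. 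Either way \eqref{noninc} gives $d_t(y)\ge 0$, and Proposition \ref{l3} yields $v_t>0$.

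\emph{Sliding.} Let $T_*:=\inf\{t>0:u_2^s\le u_1 \text{ in } \overline{\RR} \text{ for every } s\ge t\}$. The initial step gives $T_*<\infty$, and passing to the limit gives $u_2^{T_*}\le u_1$. Assume $T_*>0$ and let $v:=v_{T_*}\ge 0$. Either $v\equiv 0$, or the strong minimum principle gives $v>0$ in $\RR$ and then at a hypothetical boundary zero $(0,y_0)$ one would have $u_1(0,y_0)=u_2^{T_*}(0,y_0)$ and hence $-\partial_x v(0,y_0)\ge 0$, contradicting Hopf's lemma, so $v>0$ on all of $\overline{\RR}$. If $v\equiv 0$, then $u_1\equiv u_2^{T_*}$ and for every $s>T_*$ the relation $u_2^s\le u_2^{T_*}$ rewrites as $u_2^{s-T_*}\le u_2$, so $u_2$ is nonincreasing in $y$; combined with $u_2(0,0)=u_2(0,T_*)=1/2$, monotonicity forces $u_2(0,\cdot)\equiv 1/2$ on $[0,T_*]$ and hence $\partial_y u_2(0,\cdot)\equiv 0$ there. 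But $w:=\partial_y u_2$ satisfies $\Delta w+c\partial_y w=0$ in $\RR$ with $\partial_\nu w=f'(u_2)w$ on $\FR$, so both its Dirichlet and Neumann traces vanish on the open segment $\{0\}\times(0,T_*)$, and Holmgren's theorem applied to the constant-coefficient operator $\Delta+c\partial_y$ forces $w\equiv 0$, contradicting \eqref{bothlim}. If instead $v>0$ on $\overline{\RR}$, then $v\ge\gamma>0$ on $\FR\cap[-Y,Y]$ for some $\gamma$; since $|\nabla u_2|\in L^\infty$ by \eqref{gradbound}, for $\epsilon>0$ small $H_s:=\{y:v_s(0,y)>0\}$ contains $[-Y,Y]$ for every $s\in[T_*-\epsilon,T_*]$, while the same linearization as in the initial step (applied now for $y>Y$ and for $y<-Y$) gives $d_s\ge 0$ on $\R\setminus H_s$. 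Proposition \ref{l3} yields $v_s>0$ on $\overline{\RR}$ for each such $s$, placing $T_*-\epsilon$ in the set defining $T_*$, a contradiction. Hence $T_*=0$ and \eqref{concl} holds.

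\emph{Consequences.} Letting $t\to 0^+$ in \eqref{concl} gives $u_2\le u_1$; since $u_1(0,0)=u_2(0,0)=1/2$, the difference $u_1-u_2\ge 0$ vanishes at $(0,0)$, and the strong minimum/Hopf argument above (using $f(u_1(0,0))=f(u_2(0,0))$) forces $u_1\equiv u_2$. Taking $u_1=u_2$ in the already proved \eqref{concl} gives $u_2^t\le u_2$ for every $t>0$, so $\phi:=-\partial_y u_2\ge 0$ in $\overline{\RR}$; this $\phi$ solves $\Delta\phi+c\partial_y\phi=0$ with $\partial_\nu\phi=f'(u_2)\phi$ on $\FR$, and a zero of $\phi$ at an interior (resp. boundary) point would propagate by the strong minimum principle (resp. Hopf's lemma, since $\partial_\nu\phi=0$ at the zero) to force $\phi\equiv 0$, contradicting \eqref{bothlim}. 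I expect the main obstacle to be the case $v\equiv 0$ in the sliding step, where one must apply Holmgren's theorem to $\partial_y u_2$ rather than to $u_2-1/2$ directly, since the Neumann trace of $u_2-1/2$ on a plateau equals the constant $f(1/2)$, which need not vanish.
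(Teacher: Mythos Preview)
Your proof is correct and follows the same sliding-method strategy as the paper, using Proposition~\ref{l3} at both the initial and the openness steps. The only substantive difference is how the degenerate case $u_1\equiv u_2^{T_*}$ is ruled out. The paper does this by bootstrapping: it first runs the whole argument with $u_1=u_2$, where $u_2^t\equiv u_2$ would make $u_2(0,\cdot)$ $t$-periodic and contradict \eqref{bothlim}; this yields $\partial_y u_2<0$ strictly, and then in the general case the equalities $u_2(0,0)=u_2(0,T_*)=1/2$ already contradict this strict monotonicity. Your route instead derives the plateau $u_2(0,\cdot)\equiv 1/2$ on $[0,T_*]$ and invokes Holmgren's theorem on $w=\partial_y u_2$. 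This works, but it is heavier than needed: at that point you have already established $\partial_y u_2\le 0$, so the very same strong maximum principle/Hopf argument you use in the Consequences paragraph (applied to $\phi=-\partial_y u_2\ge 0$, which vanishes together with its Neumann trace on the plateau) gives $\phi\equiv 0$ directly, without Holmgren. The paper's bootstrap is more elementary; your argument has the minor advantage of not needing to separate the case $u_1=u_2$ from the general one.
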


Note that if we apply the proposition to $u_1=u_2=u$, where $u$ is a solution to
\eqref{problem} taking values in $[0,1]$ and with
limits 1 and 0, the conclusion \eqref{concl} establishes that 
$u$ is nonincreasing in $y$. From this, the strong maximum principle and the
Hopf's boundary lemma applied to the linearized problem satisfied by $u_y$, 
we deduce that $u_y <0,$
as claimed in the last statement of the proposition.

Second,  by letting $t\to 0^+$ in  \eqref{concl} we deduce that $u_2\leq u_1$.
But $u_2$ is a solution and $u_1$ a supersolution, with $u_2(0,0)=u_1(0,0)$.
Again the strong maximum principle and the
Hopf's boundary lemma give that $u_2\equiv u_1$, as stated in the proposition.

The proposition also gives the uniqueness of a solution with limits 
(uniqueness up to vertical translations)
for a given speed $c$. For this, apply the proposition to two solutions after translating
them in the $y$ direction.

In the proof of our main theorem in the last section, we will give two other important
applications of the proposition. First, the uniqueness of a speed admitting a solution
with limits. This will follow from the fact that any front $u_1$ with limits 1 and 0
is necessarily decreasing, and hence
the terms $c_1\partial_yu_1$ and $c_2 \partial_yu_1$ 
will be comparable for two different speeds. Since one of the functions in the proposition
may be taken to be only a supersolution, this will lead to the uniqueness of speed.

A similar argument will show the comparison of speeds corresponding
to two different ordered nonlinearities.

\begin{proof}[Proof of Proposition \ref{uniqueness}]
As explained above, we only need to prove \eqref{concl}. The subsequent 
statements follow easily from this.

Note first that $u_i$ are not identically constant,
by the assumption in \eqref{bothlim} about their limits
as $y\rightarrow\pm\infty$.
Therefore, since $0\le u_i \le 1$ and $f(0)=f(1)=0$, the strong maximum principle leads to
$0<u_i<1$ for $i=1,2$. 
 
Let $\delta>0$ be the constant in assumption \eqref{noninc} for $f$.
By hypothesis \eqref{bothlim}, there exists a compact
interval $[a,b]$ in $\R$ such that, for $i=1,2$,
\begin{eqnarray*}
u_i(0,y)\in (1-\delta, 1) \quad & \mbox{if } y\le a, &  \mbox{ and}\\
u_i(0,y)\in (0,\delta) \quad  & \mbox{if } y\ge b. & \
\end{eqnarray*}

Note that $u_2^t$ is also a solution of \eqref{problem}, and hence
\begin{equation*}
\begin{cases}
\Delta (u_1-u_2^t)+ c(u_1-u_2^t)_y\leq 0 & \mbox{ in }\R^2_+ \\
-(u_1-u_2^t)_x\geq -d^t(y)(u_1-u_2^t)  & \mbox{ on }\partial\R^2_+,
\end{cases}
\end{equation*}
where 
$$
d^t(y)=-\,\dfrac{f(u_1)-f(u_2^t)}{u_1-u_2^t}(0,y)
$$
if $(u_1-u_2^t)(0,y)\not =0$, and $d^t(y)=-f'(u_1(0,y))=-f'(u_2^t(0,y))$ otherwise.
Note that $d^t$ is a continuous function since $f$ is $C^1$.

Note that we also have, for all $R>0$,
$$
\lim_{|y|\rightarrow \infty}
(u_1-u_2^t)(x,y)=0 \quad\text{ uniformly in } x\in[0,R]. 
$$

We finish the proof in three steps.

\medskip

{\it Step} 1. We claim that $u_2^t<u_1$ in $\overline{\RR}$ for $t>0$ large enough.

To show this, we take $t>0$ sufficiently large such that
$u_2^t (0,y)< u_1(0,y)$ for $y\in [a,b]$. 
This is possible since
$u_2(0,y+t)\rightarrow 0$ as $t\rightarrow +\infty$ and $u_1>0$.
We apply Proposition~\ref{l3} to $v=u_1-u_2^t$, with 
$$
H= (a,b) \cup \{y\in\R : (u_1-u_2^t)(0,y)>0\} .
$$
Clearly, $v(0,y)>0$ in $H$. 

To show that $d^t\ge 0$ in $\R\setminus H$, let $y\not\in H$. 
There are two possibilities. First, if $y\ge b$ then $y+t\ge b$ also.
Therefore, $u_1(0,y)\leq \delta$ and $u_2^t(0,y)=u_2(0,y+t)\leq\delta$.
We conclude that $d^t(y)\ge 0$, since $f'\leq 0$ in $(0,\delta)$ by~\eqref{noninc}. 

The other possibility is that $y\le a$. 
In this case, we have $u_1(0,y)\geq 1-\delta$, and since $y\not\in H$
then $(u_1-u_2^t)(0,y)\le 0$. Therefore 
$u_2^t(0,y)\geq u_1(0,y)\geq 1-\delta$, and we conclude 
$d^t(y)\ge 0$, again by~\eqref{noninc}.

Proposition~\ref{l3} gives that $u_1-u_2^t>0$ in $\overline{\R^2_+}$.

\medskip

{\it Step $2$. Claim}: If $u_2^t\le u_1$ for some $t>0$,
then $u_2^{t+\mu}\le u_1$ for every $\mu$ small enough
(with $\mu$ either positive or negative).

This statement will finish the proof of the lemma, since then
$\{t>0 : u_2^t\le u_1\}$ is a nonempty, closed and open set in 
$(0,\infty)$, and hence equal to this interval. We conclude
$u_2^t\le u_1$ for all $t>0$.

To prove the claim of Step 2, we first show that
\begin{equation}
\label{notiden}
\mbox{if } t>0 \mbox{ and } u_2^t\le u_1, \mbox{ then } u_2^t\not\equiv u_1 .
\end{equation}

Once \eqref{notiden} is known, we can finish the proof of the claim
as follows.
First, by the strong maximum principle and Hopf's boundary lemma, 
$u_2^t< u_1$ in $\overline{\R^2_+}$.
Let $K_t$ be a compact interval such that,
on $\R\setminus K_t$, both $u_1$ and $u_2^t$ take values in $(0,\delta/2)\cup (1-\delta/2,1)$.
Recall that $(u_1-u_2^t)(0,\cdot)>0$ in the compact set $K_t$.
By continuity and the existence of limits at infinity,
we have that if $|\mu|$ is small enough, then
$(u_1-u_2^{t+\mu})(0,y)>0$ for $y\in K_t$ and 
$u_2^{t+\mu}(0,y)$ takes values in $(0,\delta)\cup (1-\delta,1)$  for $y\not\in K_t$.
Hence, we can apply Proposition~\ref{l3} to $v=u_1-u_2^{t+\mu}$ with
$H=K_t$, since $d^{t+\mu}\ge 0$ outside $K_t$. 
We conclude $u_1-u_2^{t+\mu}>0$ in $\overline{\RR}$.

\medskip

{\it Step} 3. Here we establish \eqref{notiden},
therefore completing the proof of Step~2 and of the proposition.
We assume that $t>0$ and 
$u_2^t\le u_1$, and we need to show that $u_2^t\not\equiv u_1$.

To prove this, consider first the case when both functions in
the lemma are the same, that is, $u_1\equiv u_2$.
Assume that $t>0$ and $u_2^t\equiv u_1\equiv u_2$. Then, the function 
$u_2(0,y)$ is $t$-periodic. But this is a contradiction with 
the hypothesis  \eqref{bothlim} on limits.
Therefore, in the case $u_1\equiv u_2$, the two steps above can be
carried out. We conclude that, for every solution $u_2$ as in
the lemma, we have $u_2^t\le u_2$ for every $t>0$. In particular,
$\partial_y u_2\leq 0$ and, by the strong maximum principle and Hopf's boundary lemma,
$\partial_y u_2<0$.

Finally, consider the general
case of a supersolution $u_1$ and a solution $u_2$.
Assume that $t>0$ and $u_2^t\equiv u_1$.
Then $1/2=u_1(0,0)=u_2^t(0,0)=u_2(0,t)$.
Moreover, $u_2(0,0)=1/2$ by hypothesis. Hence, $u_2(0,0)=u_2(0,t)$. 
This is a contradiction, since in the previous paragraph 
we have established that $u_2$ is decreasing in $y$.
\end{proof}

\section{Explicit traveling fronts}
\label{explicit}

In this section we construct an explicit supersolution of the linearized problem
for \eqref{problem} in the case of positively-balanced bistable nonlinearities
satisfying 
\begin{equation}\label{dernegsec}
f'(0)<0 \quad\text{ and }\quad f'(1)<0.
\end{equation}
It will lead to our result on the asymptotic behavior
of traveling fronts. In addition, we construct a family of explicit traveling fronts corresponding
to some positively-balanced bistable nonlinearities satisfying \eqref{dernegsec}.

To simplify the notation in this section,
by rescaling the independent variables we may assume that the speed of the front is 
$$
c=2.
$$

Recall that our nonlinear problem, when written for the trace $v=v(y)$ of functions on $x=0$,
becomes \eqref{fracpb} with $c=2$, i.e.,
$$
(-\partial_{yy} -2\partial_y)^{1/2} v = f(v) \quad\text{in } \R.
$$ 
As in \cite{CSi}, the construction of explicit fronts will be based on the fundamental solution 
for the homogeneous heat equation associated to the previous fractional operator 
in~$\R$, that is, equation 
\begin{equation}
\label{fracheat}
\partial_t v + (-\partial_{yy} -2\partial_y)^{1/2} v=0
\end{equation}
for functions $v=v(y,t)$.
Taking one more derivative $\partial_t$ in \eqref{fracheat}, we see that 
the solution of this problem at time $t$ (given an initial condition $v_0$)
coincides with the value of $w(x=t,\cdot)$ for the solution of
\begin{equation}
\label{extpb}
\begin{cases}
L_2w:=\Delta w + 2w_y=0&\text{ in } \R^2_+\\
w(0,\cdot)=w_0 &\text{ on } \R ,
\end{cases}
\end{equation}
where the operator $L_2$ acts on functions $w=w(x,y)$.
Thus, the heat kernel for \eqref{fracheat} coincides with the Poisson kernel for \eqref{extpb}.

To compute such Poisson kernel, as in \cite{CMS} we start with the observation that if
$$
w=e^{-y}\phi,
$$
then
\begin{equation}\label{equivopr}
L_2 w= \Delta w + 2w_y=0 \qquad\text{ if and only if }\qquad -\Delta \phi + \phi = 0.
\end{equation}
The fundamental solution of Helmholtz's equation, solution of
$$ -\Delta \Phi +  \Phi = \delta_0,$$
is given by
$$ \Phi (r) = \frac{1}{2\pi} K_0(r)$$
where $r=\sqrt{x^2+y^2}$ and $K_0$ is the modified Bessel function of the second kind
with index $\nu=0$ (see \cite{AS}).
The function $K_0=K_0(s)$ is a positive and decreasing function of $s>0$,
whose asymptotic behavior at $s=0$ is given by
$$
K_0(s)=-\log s+ \textrm{o}(|\log s|) \quad \text{ as } s\to 0.
$$
For $s\to +\infty$, all  modified Bessel functions of the second kind $K_\nu$ have the
same behavior
\begin{equation}
\label{asyk0}
K_\nu(s) =\sqrt{\pi/2} \,\, s^{-1/2}e^{-s}+ \textrm{o}(s^{-1/2}e^{-s}) 
\quad \mbox{ as $s\to +\infty$.}
\end{equation}

By considering the fundamental solution $\Phi$ but now with pole at a point $(x_0,y_0)\in
\RR$, subtracting from it $\Phi$ with pole at the reflected point $(-x_0,y_0)$, and applying
the divergence theorem, one sees that the Poisson kernel for the Helmholtz's equation
$-\Delta\phi +\phi=0$ in the half-plane $\RR$ is given by 
$$-2\Phi_x=-\frac{1}{\pi}\frac{x}{r}K_0'(r).$$ 
Writing this convolution formula for $w=e^{-y}\phi$, we deduce that
the Poisson kernel for \eqref{extpb} is given by
$$-2e^{-y}\Phi_x=-e^{-y}\frac{x}{\pi r}K_0'(r).$$ 

To avoid its singularity at the origin, given any constant $t>0$,
we consider the Poisson kernel after ``time'' $x=t$ and define
\begin{equation}\label{defPt}
\begin{split}
P^t(x,y) & :=-2G^t_x =-e^{-y}\dfrac{x+t}{\pi\sqrt{(x+t)^2+y^2}}\, K_0'(\sqrt{(x+t)^2+y^2})\\
& =e^{-y}\dfrac{x+t}{\pi\sqrt{(x+t)^2+y^2}} \,K_1(\sqrt{(x+t)^2+y^2})
\end{split}
\end{equation}
where $K_1=-K_0'$ is the modified Bessel function of the second kind with index $\nu=1$,
and where
\begin{equation*}
G^t(x,y) : =\frac{1}{2\pi} e^{-y}\, K_0(\sqrt{(x+t)^2+y^2}).
\end{equation*}
By \eqref{equivopr}, $G^t$ is a solution of the homogeneous equation $L_2w=0$ in $\RR$.  
Thus, so it is $P^t= -2G^t_x$.

Finally, the explicit traveling front will be given by 
\begin{equation*}
\begin{split}
u^t(x,y) & :=\int_y^{+\infty} P^t(x,z) dz \\
&= \int_y^{+\infty} e^{-z}\dfrac{x+t}{\pi\sqrt{(x+t)^2+z^2}} 
\,\, K_1(\sqrt{(x+t)^2+z^2}) dz. 
\end{split}
\end{equation*}

Next, let us check all the properties of $P^t$, for $t>0$, that will be needed in order to use
it as a supersolution of the linearized problem for \eqref{problem}. 
We know that $L_2P^t=0$ in~$\RR$. Using \eqref{asyk0} we see that $P_t$ is
bounded in all $\RR$.
We also have that $P^t>0$ in $\overline{\RR}$ since $K_0$ is radially
decreasing. Next, we have that, for every $R>0$, $P^t(x,y)\to 0$ as $|y|\to\infty$
uniformly in $x\in [0,R]$. This follows from the last equality in \eqref{defPt} and 
from \eqref{asyk0}. Also from the last equality in \eqref{defPt} we see that
\begin{equation}\label{derPt}
\frac{-\partial_xP^t}{P^t}(0,y) =
\frac{-1}{t}\left\{
1-\frac{t^2}{t^2+y^2}+ \frac{t^2}{\sqrt{t^2+y^2}} \,\, 
\frac{K_1'(\sqrt{t^2+y^2})}{K_1(\sqrt{t^2+y^2})}\right\} .
\end{equation}
Now, using that $K_1'=(-1/2)(K_0+K_2)$ and 
the asymptotic behavior \eqref{asyk0}, we deduce that
\begin{equation}\label{limderPt}
\lim_{|y|\to\infty} \frac{-\partial_xP^t}{P^t}(0,y) = \frac{-1}{t}.
\end{equation}
Finally, since
$$
P^t(0,y) =e^{-y}\dfrac{t}{\pi\sqrt{t^2+y^2}} \,K_1(\sqrt{t^2+y^2})
$$
and $K_1$ has the asymptotic behavior \eqref{asyk0}, we deduce
\begin{eqnarray}
\label{as1}
P^t(0,y) &= &\frac{t}{\sqrt{2\pi}} \,\, y^{-3/2}e^{-2y}+ \textrm{o}(y^{-3/2}e^{-2y}) 
\quad \mbox{ as $y\to +\infty$, \,\, and}\\
\label{as2}
P^t(0,y) &=& \frac{t}{\sqrt{2\pi}} \,\, (-y)^{-3/2}+ \textrm{o}((-y)^{-3/2}) 
\quad \mbox{ as $y\to -\infty$.}
\end{eqnarray}

We can now establish our result on explicit traveling fronts.
We need to verify that each $u^t$ is a traveling front for some nonlinearity $f^t$
 of positively-balanced bistable type
 satisfying $(f^t)'(0)<0$ and $(f^t)'(1)<0$. 
 
\begin{proof}[Proof of Theorem \ref{theoexpl}]
The statements for $u^{t,c}$ follow from the corresponding ones for $u^{t,2}=u^t$.
To prove them for $u^t$, note first that the solution of \eqref{extpb}
when $w_0\equiv 1$ is $w\equiv 1$. We deduce that its Poisson kernel satisfies
$$
\int_{-\infty}^{+\infty} P^t(x,z) dz =1
$$
for all $x>0$. It follows that $0<u^t<1$ and that $\lim_{y\to -\infty}u^t(x,y)=1$ for all $x\geq 0$. 
Clearly we also have $\lim_{y\to +\infty}u^t(x,y)=0$. In addition, $\partial_yu^t=
-P^t<0$ in $\overline\RR$. 

Next, let us see that we have $L_2 u^t =0$ in $\RR$.  Indeed, $\partial_y L_2 u^t=
L_2 u^t_y=-L_2 P^t =0$. Thus, $L_2 u^t$ is a function of $x$ alone. But
\begin{equation*}
\begin{split}
L_2 u^t  = & \int_y^{+\infty} \partial_{xx}\left\{
e^{-z}\dfrac{x+t}{\pi\sqrt{(x+t)^2+z^2}} \, 
K_1( \sqrt{(x+t)^2+z^2})\right\} dz\\
& -\partial_y\left\{e^{-y}\dfrac{x+t}{\pi\sqrt{(x+t)^2+y^2}} \, K_1(\sqrt{(x+t)^2+y^2})
\right\} \\
& -2 e^{-y}\dfrac{x+t}{\pi\sqrt{(x+t)^2+y^2}} \, K_1(\sqrt{(x+t)^2+y^2}).
\end{split}
\end{equation*}
Since $L_2u^t$ does not depend on $y$, we may let $y\to +\infty$ in this expression. 
Now, using that $K_j'=(-1/2)(K_{j-1}+K_{j+1})$ for all $j$ and 
that all functions $K_\nu(s)$ have the asymptotic behavior \eqref{asyk0}, 
we deduce that $L_2u^t\equiv 0$ in $\RR$.

The asymptotic behaviors for $-u^t_y=P^t$ in the statement of the theorem follow from  
\eqref{as1} and \eqref{as2}.

Next, we find the expression for the nonlinearity $f^t$. Since $u^t(0,\cdot)$ is decreasing
from 1 to 0, $f^t$ is implicitly well defined in $[0,1]$ by
\begin{equation}\label{formf}
\begin{split}
f^t(u^t(0,y))  &:= -u^t_x(0,y) = \int_y^{+\infty} 2G^t_{xx}(0,z) dz\\
& =  \int_y^{+\infty} 2( - G^t_{yy}-2 G^t_{y})(0,z) dz = 2(G^t_y+2G^t)(0,y)\\
& = \frac{1}{\pi} e^{-y}\left\{ K_0(\sqrt{t^2+y^2})
- \frac{y}{\sqrt{t^2+y^2}} K_1(\sqrt{t^2+y^2}) \right\}.
\end{split}
\end{equation}
From this, we clearly see that $f^t(0)=f^t(1)=0$, again by \eqref{asyk0}.

The rest of properties of $f^t$ will be deduced from the following implicit formula
for its derivative. We have $(f^t)'(u^t(0,y)) u^t_y(0,y) = -u^t_{xy}(0,y)$ and thus, by \eqref{derPt},
\begin{equation*} 
\begin{split}
(f^t)'&(u^t(0,y)) = \frac{-u^t_{xy}}{u^t_y}(0,y) \\
&=\frac{-\partial_xP^t}{P^t}(0,y) =
\frac{-1}{t}\left\{
1-\frac{t^2}{t^2+y^2}+ \frac{t^2}{\sqrt{t^2+y^2}} \,\, 
\frac{K_1'(\sqrt{t^2+y^2})}{K_1(\sqrt{t^2+y^2})}\right\} \\
&= \frac{-1}{t}\left\{
1-\frac{t^2}{t^2+y^2}- \frac{t^2}{\sqrt{t^2+y^2}} \,\, 
\frac{K_0(\sqrt{t^2+y^2})+K_2(\sqrt{t^2+y^2}) }{2K_1(\sqrt{t^2+y^2})}\right\}  \\
&= \frac{t}{\sqrt{t^2+y^2}}\left\{
-\frac{\sqrt{t^2+y^2}}{t^2}+ \frac{1}{\sqrt{t^2+y^2}}  
+\frac{K_0(\sqrt{t^2+y^2})+K_2(\sqrt{t^2+y^2}) }{2K_1(\sqrt{t^2+y^2})}\right\} \\
&=: \frac{t}{\sqrt{t^2+y^2}}\,\, h^t(y).
\end{split}
\end{equation*}
It turns out that the function $\{K_0(s)+K_2(s)\}/(2K_1(s))$ is a decreasing function of
$s\in (0,+\infty)$ which behaves as $1/s$ at $s=0^+$ and as $1+1/(2s)$ at $+\infty$.
Therefore,
\begin{equation*}
(f^t)'(0)=(f^t)'(1)=-\frac{1}{t} <0.
\end{equation*}
It also follows that $h^t$ is a decreasing function of $y\in [0,+\infty)$ satisfying $h^t(0)
=(K_0(t)+K_2(t))/(2K_1(t))>0$
and $\lim_{y\to +\infty}h^t(y)=-\infty$. Therefore, since $h^t$ is an even function of $y$,
there exists a $y^t>0$ such that $h^t$ is negative in $(-\infty,-y^t)$,
positive in $(-y^t,y^t)$, and negative in $(y^t,+\infty)$. As a consequence,
for some $0<\gamma_1<\gamma_2<1$, we have that
$(f^t)'$ is negative in $(0,\gamma_1)$,
positive in $(\gamma_1,\gamma_2)$, and negative in $(\gamma_2,1)$. 
This gives that $f^t$ has a unique zero in $(0,1)$ and that $f^t$ is of bistable type.

We finally check the positively-balanced character of $f^t$ (after the end of the
proof we give an alternative,
more synthetic argument for this).  Using formula \eqref{formf} for $f^t$ in terms of 
$G^t$ and that $G^t_{xx}=-\partial_y(G^t_y+2G^t)$, we have
\begin{equation*}
\begin{split}
\int_0^1 f^t(s)ds &=\int_{\R} f(u^t(0,y)) (-u^t_y)(0,y) dy   =
-4\int_{\R} \{(G^t_y+2G^t)G^t_x\}(0,y) dy\\
& = 4\int_{\RR} \{  (G^t_y+2G^t)G^t_{xx} + (G^t_{xy}+2G^t_x)G^t_x\} dx dy\\
& = 4\int_{\RR} \{  -\partial_y(G^t_y+2G^t)^2/2 + \partial_y (G^t_{x})^2/2+2(G^t_x)^2\} dx dy\\
& = 8\int_{\RR} (G^t_x)^2 dx dy >0,
\end{split}
\end{equation*}
which finishes the proof.
\end{proof}

The following is an alternative way to prove that the integral of $f^t$ is positive.
It is more synthetic but it relies on deeper results. Assume by contradiction that
$\int_0^1f^t(s)ds\leq 0$. Then, by the remarks made after Theorem~\ref{Main}, 
the existence part of
Theorem~\ref{Main}, and the results of \cite{CSo}, there exists a solution of \eqref{problem} 
for some $c\leq 0$ and which satisfies the limits \eqref{limits0}. But $u^t$ is also a
solution of \eqref{problem}, now with  $c=2$, and satisfying the limits \eqref{limits0}. 
By the uniqueness of the speed proved in Theorem~\ref{Main}, we arrive to a 
contradiction.

\section{Proof of Theorem \ref{Main}}
\label{pruebafinal}

In this section we use all previous results to establish our main theorem.

\begin{proof}[Proof of Theorem \ref{Main}]

Let $f$ be  of positively-balanced bistable type
or of combustion type as in Definition~{\rm \ref{nonlinearterm}}. 

\medskip

{\em Part $($i$)$}. This first part has been established in Proposition~{\rm \ref{PDE}}, together with
Lemma~\ref{limitesverticalesdemo} 
where we proved the existence of limits as $y\to\pm\infty$.
That $0<u<1$ follows from the strong maximum principle and the Hopf's boundary lemma,
since we know that $0\leq u\leq 1$ and $f(0)=f(1)=0$.

\medskip

{\em Part $($ii$)$}. 
Let $(c_1,u_1)$ and $(c_2,u_2)$ be two solutions pairs
with $u_i$ taking values in $[0,1]$ and having limits 1 and 0 as $y\to\mp\infty$.
By Lemma~\ref{lindelof}, both $u_i$ satisfy the uniform limits assumption \eqref{bothlim}.
Translate each one of them in the $y$ variable so that both satisfy $u_i(0,0)=1/2$.
Assume that $c_1\leq c_2$. Proposition~\ref{uniqueness} applied with $c=c_1$
gives that the solution $u_1$ is decreasing in $y$. Thus
$$
0=\Delta u_1+c_1\partial_y u_1 \geq \Delta u_1+c_2 \partial_y u_1,
$$
and hence $u_1$ is a supersolution for the problem with $c=c_2$.
Proposition~\ref{uniqueness} applied with $c=c_2$
leads to $u_1\equiv u_2$. As a consequence,
since $\partial_y u_1 <0$, we deduce from the equations that $c_1=c_2$.

\medskip

{\em Part $($iii$)$}. 
The monotonicity in $y$ of a variational solution as in part (i) was established
in Proposition~\ref{TMONOTONIA2D}. From $u_y\leq 0$ we deduce $u_y<0$ using
the  strong maximum principle and the Hopf's boundary lemma for the
linearized problem satisfied by $u_y$.
The existence of its vertical and horizontal limits has been proved in 
Lemmas~\ref{limitesverticalesdemo}
and \ref{limitehorizontaldemo}. 
Let us now show that $u_x\leq 0$ in case that $f$ is of combustion type ---this fact
is not true for bistable nonlinearities since the normal derivative $-u_x=f(u)$ changes
sign on $\{x=0\}$. Note that $u_x$ is a solution of $(\Delta+c\partial_y)u_x=0$ in $\RR$,
it is bounded by \eqref{gradbound}, and has limits 0 as $|y|\to\infty$ uniformly on compact sets
of $x$ by \eqref{maximumpr}. In addition, $u_x=-f(u)\leq 0$ on $x=0$. 
Lemma~\ref{lemamio} leads to $u_x\leq 0$ in $\RR$.

\medskip

{\em Part $($iv$)$}. 
We can give two different proofs of  this part. 
Let $f_1\geq f_2$, not being identically equal. Let $(c_i,u_i)$ be the unique solution pair
for the nonlinearity $f_i$ 
with $u_i$ taking values in $[0,1]$ and having limits as $y\to\pm\infty$.

The first proof is variational and uses formula \eqref{formspeed} for the speed.
Take $a>0$ small enough so that both problems \eqref{problemconstraint}, for 
$f_1$ and for $f_2$,
can be minimized in $B_a$. Since $G_1\leq G_2$, the minimum values
satisfy $I_{1,a}\leq I_{2,a}$, in fact with a strict inequality since $G_1\not\equiv G_2$
and the minimizers $\underline{u}_1$ and $\underline{u}_2$
 take all the values in $(0,1)$. Thus, from  \eqref{formspeed}
we deduce $c_1>c_2$.

The second proof of (iv) is non-variational. Recall that
by Lemma~\ref{lindelof}, both $u_i$ satisfy the uniform limits assumption \eqref{bothlim}.
Translate each front in the $y$ variable so that both satisfy $u_i(0,0)=1/2$.
Assume, arguing by contradiction, that $c_1\leq c_2$. 
Since $u_1$ is decreasing in $y$, we have
$$
0=\Delta u_1+c_1\partial_y u_1 \geq \Delta u_1+c_2 \partial_y u_1.
$$
In addition,
$$
\frac{\partial u_1}{\partial\nu}=f_1(u_1)\geq f_2(u_1) \quad\text{on }\partial\RR.
$$
Hence, $u_1$ is a supersolution for the problem with $c=c_2$ and $f=f_2$.
Proposition~\ref{uniqueness} leads to $u_1\equiv u_2$. As a consequence,
we obtain $f_1\equiv f_2$ ---since $u_1$ takes all the values of $(0,1)$.
This is a contradiction.

\medskip

{\em Part $($v$)$}. To establish this part, 
it suffices to show the bounds for $-u_y$. From them, the ones
for $u$ and $1-u$ follow by integration. Defining $\tilde u$ by $u(x,y)=\tilde u (cx/2,cy/2)$,
we see that $\tilde u$ is a front with speed 2 for problem \eqref{problem}
with nonlinearity $(2/c)f$. Since the constants on the 
bounds of part (v) do not reflect the dependence
on $f$, we may rename $\tilde u$ by $u$, and $(2/c)f$ by $f$,
and assume that $u$ is a front for the nonlinearity $f$ with speed
$c=2$. Note however that the factor $e^{-2y}$ in \eqref{asyderpos}
will change to  $e^{-cy}$ in \eqref{asyderpos} after the scaling.

As stated in the theorem, 
the lower bounds for $-u_y$ hold for any $f$ of positively-balanced bistable type
or of combustion type. To prove them, we take $t>0$ small enough such that
$$
-\frac{1}{2t}\leq \min_{[0,1]} f'. 
$$
For such $t$, consider the Poisson kernel $P^t$ defined by \eqref{defPt}
and, for any positive constant $C>0$, the function
$$
v:=C(-u_y)-P^t .
$$
Note that $\Delta v+2v_y=0$ in $\RR$.
Using \eqref{limderPt} and that $-u_y$ and $P^t$ are positive, we infer
$\{-\partial_{x} v+ (2t)^{-1} v\}(0,y) \geq 0$  
for $|y|$ large enough, say for $y$ in the complement of a compact interval $H$.
Next, take the constant $C>0$ large enough such that $v >0$ in the compact set $H$.
By \eqref{gradbound}, the limits of $u_y$ established in \eqref{maximumpr}, and the 
properties of $P^t$ checked in section~\ref{explicit}, we can apply 
Proposition~\ref{l3} with $c=2$ and $d(y)=(2t)^{-1}$ to deduce that $v>0$ in $\RR$.
By using the asymptotic behaviors \eqref{as1} and  \eqref{as2}  of $P^t$ at $\pm\infty$,
we conclude the two lower bounds for $-u_y$.

To prove the upper bounds for $-u_y$ we need to assume that $f'(0)<0$ and $f'(1)<0$.
We proceed in the same way as for the lower bounds, but
replacing the roles of $-u_y$ and $P^t$. We now take $t>0$ large enough
such that
$$
\max\{f'(0),f'(1)\}< -\frac{2}{t}.
$$ 
Using \eqref{limderPt}, for any $C>0$,
$\tilde v:=CP^t-(-u_y)$ satisfies $\{ -\partial_{x} \tilde v+ (2/t) \tilde v\} (0,y) \geq 0$  
for $|y|$ large enough, say for $y$ in the complement of a compact interval $H$. 
One proceeds exactly as before to obtain
$\tilde v>0$ in $\RR$ for $C$ large enough.
This gives the desired upper bounds for $-u_y$.
\end{proof}


\begin{thebibliography}{99}

\bibitem[AS]{AS}
Abramowitz, M., Stegun, I. A., Handbook of mathematical functions with formulas, 
graphs, and mathematical tables, 
National Bureau of Standards Applied Mathematics Series, 55.

\bibitem[BN1]{BN1}
Berestycki, H., Nirenberg, L., {\em On the method of moving planes
and sliding method}, Bull. Soc. Brasil Mat. (N.S.) {\bf 22} (1991), 1--37.

\bibitem[BN2]{BN2}
Berestycki, H., Nirenberg, L., {\em Traveling fronts in
cylinders}, Ann. Inst. Henri Poincar\'e {\bf 9} (1992), 497--572.

\bibitem[Br]{Br}
Brock, F., {\em Weighted Dirichlet-type inequalities for Steiner symmetrization}, 
Calc. Var. Partial Differential Equations {\bf 8} (1999), 15--25.

\bibitem[CSi]{CSi}
Cabr\'e, X., Sire, Y., {\em Nonlinear equations for fractional Laplacians II: existence, uniqueness, and qualitative properties of solutions}, to appear in Trans. Amer. Math. Soc.; 
preprint  arxiv.org/abs/1111.0796.

\bibitem[CSo]{CSo}
Cabr\'e, X., Sol\`a-Morales, J., {\em Layer solutions in a
half-space for boundary reactions}, Comm. Pure Appl. Math. 
{\bf 58} (2005), 1678--1732.

\bibitem[CMS]{CMS}
Caffarelli, L., Mellet, A., Sire, Y., {\em Traveling waves for a
boundary reaction-diffusion equation}, Adv. Math. {\bf 230} (2012), 433--457. 

\bibitem[GT]{GT}
Gilbarg, D., Trudinger, N.S., Elliptic Partial Differential
Equations of Second Order, (2nd ed.), Springer, 1983.

\bibitem[GZ]{GZ}
Gui, C., Zhao, M., {\em Traveling wave solutions of Allen-Cahn equation with
a fractional Laplacian}, preprint.

\bibitem[H]{H}
Heinze, S., {\em A variational approach to traveling waves},
Max-Planck-Institut f\"ur Mathematik in den Naturwissenschaften
Leipzig Preprint {\bf 85} (2001).

\bibitem[Ka]{Ka}
Kawohl, B., Rearrangements and convexity of level sets in PDE. 
Lecture Notes in Mathematics, 1150. Springer-Verlag, Berlin, 1985.

\bibitem[Ky]{Ky}
Kyed, M., {\em Existence of travelling wave solutions for the heat equation in infinite cylinders with a nonlinear boundary condition}, Math. Nachr. {\bf 281} (2008),  253--271.

\bibitem[L1]{L1}
Landes, R., {\em Some remarks on rearrangements
and functionals with non-constant density}, Math. Nachr. 
{\bf 280} (2007), 560--570.

\bibitem[L2]{L2}
Landes, R., {\em Wavefront solution in the theory of boiling liquids},
Analysis (Munich) {\bf 29} (2009), 283--298.

\bibitem[L3]{L3}
Landes, R., {\em Stable and unstable initial configuration in the theory wave fronts}, 
Discrete Contin. Dyn. Syst. Ser. S {\bf 5} (2012), 797--808. 

\bibitem[LMN]{LMN}
Lucia M., Muratov C., Novaga M., {\em Linear vs. nonlinear
selection for the propagation speed of the solutions of scalar
reaction-diffusion equations invading an unstable equilibrium},
Comm. Pure Appl. Math. {\bf 57} (2004),
616--636.

\bibitem[MRS]{MRS}
Mellet, A., Roquejoffre, J.-M., Sire, Y., {\em Generalized fronts for one-dimensional reaction-diffusion equations}, Discrete Contin. Dyn. Syst. {\bf 26} (2010), 303--312. 

\end{thebibliography}
\end{document}